\documentclass[11pt,letterpaper]{amsart}

\usepackage{amsmath}
\usepackage{amssymb}
\usepackage{amsfonts}
\usepackage{mathrsfs}
\usepackage{array}
\usepackage{enumerate}
\usepackage[french, english]{babel}
\usepackage[all]{xy}
\usepackage[T1]{fontenc}
\usepackage{float}
\usepackage{dsfont}

\numberwithin{equation}{section}

\newcommand{\msg}{\mathscr{G}}
\newcommand{\msi}{\mathscr{I}}
\newcommand{\msp}{\mathscr{P}}
\newcommand{\msq}{\mathscr{Q}}

    \newcommand{\BC}{{\mathbb {C}}} 
     \newcommand{\BF}{{\mathbb {F}}}
     \newcommand{\BH}{{\mathbb {H}}}

     \newcommand{\BP}{{\mathbb {P}}}
    \newcommand{\BQ}{{\mathbb {Q}}} \newcommand{\BR}{{\mathbb {R}}}
     \newcommand{\BT}{{\mathbb {T}}}

     \newcommand{\BZ}{{\mathbb {Z}}}

    \newcommand{\CE}{{\mathcal {E}}} \newcommand{\CF}{{\mathcal {F}}}
    \newcommand{\CG}{{\mathcal {G}}} 
     
    \newcommand{\CK}{{\mathcal {K}}} 
     
    \newcommand{\CO}{{\mathcal {O}}} 
     
    \newcommand{\CS}{{\mathcal {S}}}

     \newcommand{\fb}{{\mathfrak{b}}}
     \newcommand{\fd}{{\mathfrak{d}}}

     \newcommand{\fl}{{\mathfrak{l}}}

     \newcommand{\ft}{{\mathfrak{t}}}

     \newcommand{\fA}{{\mathfrak{A}}} \newcommand{\fB}{{\mathfrak{B}}}
     
     \newcommand{\fF}{{\mathfrak{F}}}
    \newcommand{\fG}{{\mathfrak{G}}} 
     
    \newcommand{\fK}{{\mathfrak{K}}} \newcommand{\fL}{{\mathfrak{L}}}
    \newcommand{\fM}{{\mathfrak{M}}}

    \newcommand{\fS}{{\mathfrak{S}}}

    \renewcommand{\Im}{{\mathrm{Im}}}

    \newcommand{\ord}{{\mathrm{ord}}}

    \renewcommand{\mod}{\ \mathrm{mod}\ }\renewcommand{\Re}{{\mathrm{Re}}}

    \newcommand{\Sel}{{\mathrm{Sel}}}

    \font\cyr=wncyr10

    \newcommand{\Sha}{\hbox{\cyr X}}

    \font\cyr=wncyr10

    \theoremstyle{plain}
    \newtheorem{thm}{Theorem}[section] 
    \newtheorem{lem}[thm]{Lemma}  \newtheorem{prop}[thm]{Proposition}
     \newtheorem{defn}[thm]{Definition}
        
   \newtheorem{ques}[thm]{Question}

\theoremstyle{remark} 
\theoremstyle{remark} 
\theoremstyle{remark} 

    \numberwithin{equation}{section}

    \newcommand{\Neron}{N\'{e}ron~}

    \numberwithin{equation}{section}

\DeclareFontFamily{U}{wncy}{}
\DeclareFontShape{U}{wncy}{m}{n}{<->wncyr10}{}
\DeclareSymbolFont{mcy}{U}{wncy}{m}{n}

\begin{document}

\title[rank 1 and nontrivial $2$-part of Shafarevich-Tate groups over $\BQ_\infty$]{Elliptic curves with rank one and nontrivial $2$-part of Tate Shafarevich groups over the $\BZ_2$-extension of $\BQ$}

\author[Li-Tong Deng]{Li-Tong Deng}
\address{\textit{Li-Tong Deng}, Qiuzhen College, Tsinghua University, 100084, Beijing, China}
\email{\it dlt23@mails.tsinghua.edu.cn}

\author[Yong-Xiong Li]{Yong-Xiong Li}
\address{\textit{Yong-Xiong Li}, Yanqi Lake Beijing Institute of Mathematical Sciences and Applications, No. 544, Hefangkou Village Huaibei Town, Huairou District Beijing 101408, Beijing, China}
\email{\it yongxiongli@gmail.com}

\begin{abstract}
Let $\mathbb{Q}_\infty$ be the cyclotomic $\mathbb{Z}_2$-extension over $\mathbb{Q}$. For each integer $n\geq1$, let $\mathbb{Q}_n$ denote the unique subfield in $\mathbb{Q}_\infty$ such that $[\mathbb{Q}_\infty:\mathbb{Q}]=2^n$. Denote by $\mathbb{Z}_2[{\rm Gal}(\mathbb{Q}_n/\mathbb{Q})]$ the group ring of ${\rm Gal}(\mathbb{Q}_\infty/\mathbb{Q})$. For any elliptic curve defined over $\mathbb{Q}$ with odd conductor, the Mazur-Tate modular element associated with the curve is an element of $\mathbb{Z}_2[{\rm Gal}(\mathbb{Q}_n/\mathbb{Q})]$.    
In this paper, for each $n$, we study the $2$-adic properties of Mazur-Tate modular elements associated with quadratic twists of elliptic curves, under specializations by finite order characters of ${\rm Gal}(\mathbb{Q}_n/\mathbb{Q})$. Using the congruence properties of Heegner points and an equivariant version of the Coates-Wiles theorem, we construct an elliptic curve $E/\mathbb{Q}$ and a family of quadratic twists $E^{(m)}$ of $E$ such that each $E^{(m)}$ has both analytic and algebraic rank one over $\mathbb{Q}_\infty$, and whose Tate-Shafarevich group is infinite over $\mathbb{Q}_\infty$.  
\end{abstract}

\subjclass[2020]{11G40 (primary), 11G05, 11R23 (secondary).}

\keywords{Tate--Shafarevich groups, L-functions, Mazur-Tate elements.}

\maketitle

\selectlanguage{english}

\section{Introduction}

\subsection{Background}
Let $p$ be a prime.
One motivation of this paper is the following question: 

\begin{ques}
Is there an elliptic curve $E$ defined over $\BQ$, and
are there infinitely many number fields $\fF_n/\BQ$ with $[\fF_n:\BQ]\to \infty$, 
such that $E(\fF_n)$ has rank $r_n=0$ (resp.  $r_n=1$) for all $n$, 
and the $p$-primary part $\Sha(E/\fF_n)(p)$ of $\Sha(E/\fF_n)$ is nontrivial (or trivial)? 
\end{ques}

For the nontrivial case, in \cite{kurihara}, Kurihara showed that there exists $E/\BQ$ and a prime $p$ that is a good supersingular odd prime for $E$, 
such that in the case $r_n=0$, the answer to the above question is affirmative when \(\mathfrak{F}_n\) is the \(n^{th}\) cyclotomic \(p\)-extension of \(\mathbb{Q}\). 
Later,  in \cite{KO2}, Kurihara and Otsuki extended this result to the case $r_n=1$ and $p=2$, 
assuming the finiteness of Tate-Shafarevich group for $E/\fF_1$, and they showed that the answer remains true under these conditions.
  
Also in the nontrivial case, when $r_n=0$ and $p=2$,  \cite[Theorem 8.1]{CL} shows that if $E$ is the Gross curve defined over a certain number field, 
the answer is also affirmative. For the trivial case, when $r_n=1$ and $p=2$,  
\cite[Proposition 5.2]{Li-aa} (see also \cite[Theorems 1.2 and 1.3]{CLL}) shows that there exists an elliptic curve $E/\BQ$ such that there exist number fields $\fF_n/\BQ$ with degree tending to 
infinity, where $E(\fF_n)$ has rank one, but $\Sha(E/\fF_n)(2)=0$ for all $n$.  

The first goal of this paper is to answer the above question for $r_n=1$ and $p=2$, without assuming the Birch-Swinnerton-Dyer conjecture or the finiteness of Tate-Shafarevich groups. 

\medskip

On the other hand, the study of twists of elliptic curves over $\BQ$ has a long history. For example, the study of quadratic twists of the elliptic curve
\[\CE: y^2=x^3-x\]
is related to the congruence number problem. Specifically, for a squarefree integer $d$, we define the quadratic twist $\CE^{(d)}$ of $\CE$ by the 
equation
\[\CE^{(d)}: dy^2=x^3-x.\]
Similarly, one can define the quadratic twist $E^{(d)}$ for any elliptic curve $E/\BQ$.
Another motivation for this paper is the following question: 

\begin{ques}
Is there an elliptic curve $E/\BQ$, and are there infinitely many $d$'s
such that $E^{(d)}(\BQ)$ has rank $0$ (resp. rank $1$), and $\Sha(E^{(d)}/\BQ)(p)\neq0$ (or \(=0\))? 
\end{ques}

For the nontrivial case, in \cite{razar}, for the elliptic curve $\CE/\BQ$, Razar showed there are infinitely many primes $\ell$ such that $\CE^{(\ell)}(\BQ)$ has rank zero and 
$\Sha(\CE^{(\ell)}/\BQ)(2)\neq0$. In \cite{KL-doc}, the second author and his collaborator provided 
evidence for the existence of infinitely many cubic twists of the curve $X_0(27)$ with rank one and nontrivial $2$-part of the Tate-Shafarevich groups. 

For the trivial case, Tian showed in \cite{tian} that there are infinitely many $d$, with arbitrarily finitely many prime factors, 
such that $\CE^{(d)}(\BQ)$ has rank 
one, but all have $\Sha(\CE^{(d)}/\BQ)(2)=0$. If further \(E\) is an elliptic curve over \(\mathbb{Q}\) with complex multiplication, then for almost all prime \(p\), by \cite{KO-inv} there are infinitely many \(d\) such that \(\Sha(E^{(d)}/\BQ)(p)=0\). 

\medskip

Building on our first goal, the main aim of this paper is to find a family of quadratic twists $E^{(d)}$ of an elliptic curve $E$ over $\BQ$, along with 
the number fields $\fF_n$ as described in the first question, 
such that ${\rm rank}(E^{(d)}(\fF_n))=1$ and $\Sha(E^{(d)}/\fF_n)(2)\neq0$. Furthermore, we
demonstrate the rank part of the Birch-Swinnerton-Dyer conjecture for all these elliptic curves $E^{(d)}/\fF_n$ as $n\to+\infty$. Our result generalizes previous ones from a fixed curve or a fixed number field \(\mathbb{Q}\) to an infinite family of number fields whose degree tends to infinity, and to a quadratic twist family of \(E\) over these fields.

\bigskip

\subsection{Main Results}
Let $E$ be the elliptic curve defined by
\[E: y^2+y=x^3+2.\]
The curve $E$ has conductor $243$, and its Mordell-Weil group $E(\BQ)$ is isomorphic to $\BZ/3\BZ$. Additionally, the algebraic value 
of the $L$-function of $E$ at $s=1$ is given by
\[\frac{L(E,1)}{\Omega_E}=\frac{1}{3},\] 
where $\Omega_E$ is the real period of $E$. Let $f=\sum^\infty_{n=1}a_nq^n$ be the normalized weight $2$ newform corresponding to $E$, 
where $q=e^{2\pi i z}$. Note that $a_2=0$, and $E$ has good supersingular reduction at $2$. 
For a squarefree integer $d\equiv 1\mod 4$, we denote by $E^{(d)}$ the quadratic twist of $E$ under the extension $\BQ(\sqrt{d})/\BQ$. 
\medskip

Let $\BQ_\infty$ be the cyclotomic $\BZ_2$-extension of $\BQ$. For each integer $n\geq0$, we denote by $\BQ_n$ the unique intermediate field 
in $\BQ_\infty$ such that $[\BQ_n:\BQ]=2^n$. Note that $\BQ=\BQ_0$. 

\medskip

For each $n\geq0$, the $2$-power Selmer group $\Sel(E^{(d)}/\BQ_n)$ fits into the middle of the following exact sequence:
\[0\to E^{(d)}(\BQ_n)\otimes (\BQ_2/\BZ_2)\to \Sel(E^{(d)}/\BQ_n)\to \Sha(E^{(d)}/\BQ_n)(2)\to 0.\]
We denote by $\Sel(E^{(d)}/\BQ_n)^\vee$ the Pontryagin dual of $\Sel(E^{(d)}/\BQ_n)$. The ($\BZ_2$-)corank of $\Sel(E^{(d)}/\BQ_n)$ is defined to be
the $\BZ_2$-rank of its Pontryagin dual. Let $L(E^{(d)}/\BQ_n,s)$ denote the $L$-series of $E^{(d)}$ over $\BQ_n$. From Deuring's theorem, this is 
a holomorphic function on the whole complex plane.

\medskip

For any squarefree integer $\ft$,
we denote by $h(\ft)$ the class number of the quadratic field $\BQ(\sqrt{\ft})$. 
For an odd prime $\ell$, we denote by $\left(\frac{\cdot}{\ell}\right)$ the Legendre symbol. 
For a finite abelian group $\fA$, we define the $2$-rank of $\fA$ to be the $\BF_2$-dimension of $\fA/2\fA$. The main theorem of our paper is as follows.

\medskip

\begin{thm}\label{thm2}
Suppose that $p$ and $q$ are distinct primes satisfying the following conditions:
\begin{enumerate}
\item[(1)]$p\equiv q\equiv 7\mod 12$;
\item[(2)]$a_p$ and $a_q$ are odd;
\item[(3)]$\left(\frac{-2p}{q}\right)=1$
\item[(4)]$3\nmid h(-q)h(-6pq)$.
\end{enumerate}
Let $m=pq$. Then the following hold:
\begin{enumerate}
\item[(i)]${\rm rank}(E^{(m)}(\BQ))=0=\ord_{s=1}L(E^{(m)}/\BQ,s)$.
\item[(ii)]For each integer $n\geq1$, we have
$$\ord_{s=1}L(E^{(m)}/\BQ_n,s)=1={\rm rank}(E^{(m)}(\BQ_n)).$$
Furthermore, let
\[\Sha(E^{(m)}/\BQ_\infty)=\lim_n\Sha(E^{(m)}/\BQ_n).\]
Then both the order and the $2$-rank of $\Sha(E^{(m)}/\BQ_\infty)(2)$ are infinite.
\end{enumerate}
\end{thm}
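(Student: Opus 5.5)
We plan to argue in three stages: the rank-zero statement over $\BQ$, the $2$-adic valuations of the specializations of the Mazur--Tate elements of $E^{(m)}$ at characters of ${\rm Gal}(\BQ_n/\BQ)$, and the deduction of rank one together with the growth of $\Sha$.

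\textbf{Rank zero over $\BQ$.} For (i) we show that $L(E^{(m)},1)/\Omega_{E^{(m)}}$ is a $2$-adic unit, or at least nonzero. The curve $E$ has CM by $\BQ(\sqrt{-3})$, and $p\equiv q\equiv 7 \mod 12$ forces $p,q$ to split in $\BQ(\sqrt{-3})$. We would therefore use a Zhao-type induction on the number of prime factors of $m$, or equivalently a congruence for Heegner points or Waldspurger-type formulas. The aim is to express $L(E^{(m)},1)/\Omega$ modulo $2$ as a sum over the divisors $d\mid m$ of terms involving $L(E^{(d)},1)$, starting from the base value $L(E,1)/\Omega_E=1/3$. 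The conditions that $a_p,a_q$ are odd and $\left(\frac{-2p}{q}\right)=1$ should make this sum odd. Nonvanishing then gives analytic rank $0$, and Coates--Wiles (via Rubin) gives rank $0$ together with finiteness of $\Sha(E^{(m)}/\BQ)$.

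\textbf{Specializations over $\BQ_n$.} For (ii), let $\chi$ be a character of ${\rm Gal}(\BQ_n/\BQ)$ of exact order $2^k$ with $k\ge1$. We compute the $2$-adic valuation of the Mazur--Tate element $\theta_n(E^{(m)})$ specialized at $\chi$. Because $a_2=0$, the Kurihara/Pollack $\pm$-theory describes these valuations: the values are explicitly nonzero for characters of order $2^k$ with $k$ of one parity, and vanish for the other parity, up to one exceptional character. We expect the root number of $E^{(m)}$ twisted by $\chi$ to be $-1$ for exactly the characters of order $2$ (i.e. for $E^{(-m)}$ or $E^{(2m)}$ over $\BQ_1=\BQ(\sqrt2)$). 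So the strategy is the following.
\begin{enumerate}
\item[(a)] For $\chi$ of order $2$, write $L(E^{(m)}/\BQ_1,s)=L(E^{(m)},s)L(E^{(2m)},s)$ and show that $L(E^{(2m)},s)$ has a simple zero. For this we use a Heegner point on $E^{(2m)}$ built from $K=\BQ(\sqrt{-6pq})$ or $\BQ(\sqrt{-q})$, and prove it is non-torsion by a $2$-adic (or $3$-adic) congruence with an explicit Heegner point of smaller level. This is where the hypothesis $3\nmid h(-q)h(-6pq)$ enters, through a genus-theory or class-number argument. Gross--Zagier and Kolyvagin then give analytic and algebraic rank $1$ for $E^{(2m)}/\BQ$.
\item[(b)] For $\chi$ of order $\ge 4$, use the equivariant Coates--Wiles/Rubin theorem to show $L(E^{(m)},\chi,1)\ne0$ and that the $\chi$-part of $E^{(m)}(\BQ_n)$ is finite. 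Nonvanishing follows from computing the $2$-adic valuation of $\chi(\theta_n)$, reducing modulo the maximal ideal of $\BZ_2[\chi]$ to the order-$1$ and order-$2$ computations of the first stage and of (a).
\end{enumerate}
Summing over characters gives $\ord_{s=1}L(E^{(m)}/\BQ_n,s)=1={\rm rank}\,E^{(m)}(\BQ_n)$ for every $n\ge1$.

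\textbf{Infinitude of $\Sha$.} Since the rank is stable, the finiteness of the $\chi$-components yields finite $\Sha(E^{(m)}/\BQ_n)$ for all $n$. We then compare the exact $2$-adic valuations of $\chi(\theta_n)$ with $\BQ_2(\chi)$-lengths of the $\chi$-parts of the Selmer groups, using Kurihara's Fitting-ideal type results for $\pm$-Selmer groups, or Iwasawa's main conjecture for CM curves (Rubin) over $\BQ_\infty$. Summing the resulting lower bounds shows that $\#\Sha(E^{(m)}/\BQ_n)(2)$ grows like $2^{c\cdot 2^n}$.

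For the $2$-rank, we use the isomorphism $\Sel(E^{(m)}/\BQ_n)[2]\cong$ a $2$-Selmer group computed via ${\rm Gal}(\BQ_n/\BQ)$-descent; in the supersingular case the corank of $\Sel(E/\BQ_\infty)$ equals $\infty$ after dual. Concretely, we show that $\Sha(E^{(m)}/\BQ_n)[2]$ contains a free $\BF_2[{\rm Gal}(\BQ_n/\BQ_1)]$-submodule of bounded rank. This follows from nontrivial $2$-torsion classes over $\BQ_1$ whose ${\rm Gal}$-orbits remain independent, since their norms to $\BQ_1$ are nonzero; this uses the finite $\Sha(E^{(m)}/\BQ_1)[2]\neq0$ obtained from the $\Sel_2$ computation at $\BQ$ together with the conditions on $p,q$.

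\textbf{Main obstacle.} We expect the hardest step to be the nontriviality of the Heegner point in (a), where the class-number hypothesis is needed. The second difficulty is converting valuations of Mazur--Tate elements into growth of the $2$-rank rather than just the order; for this we would first try Kurihara's Fitting-ideal results.
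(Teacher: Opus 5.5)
Your treatment of (i), a mod-$2$ Zhao-type reduction to the untwisted value, matches the paper. There it is $\msp_{\mathds{1},m}(\CS)=\frac13\bigl((a_p-2)(a_q-2)-(p-1)(q-1)\bigr)$, which is odd. Using an equivariant Coates--Wiles theorem for the algebraic rank also matches. However, the analytic core of (ii) has a genuine gap.

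You need $L(E^{(m)},\chi\psi_n,1)\neq0$ for \emph{every} character $\psi_n$ of order $2^n\ge4$. Here $\chi$ is the quadratic character of $\BQ(\sqrt m)$. Your mechanism is to reduce the specialization modulo the maximal ideal of $\BZ_2[\zeta_{2^n}]$ and compare with the order-$1$ and order-$2$ values. This cannot work, for three reasons.
\begin{itemize}
\item Every such specialization is a non-unit, of valuation $q_n/2^{n-1}$, where $q_n=(2^n-1)/3$ or $(2^n+7)/3$ according as $n$ is even or odd. This gives $1/2,\,5/4,\,5/8,\,13/16,\dots$, so the reduction is $0$.
\item The order-$2$ value is itself $0$.
\item Because $a_2=0$, the elements are not norm-compatible: $\pi_{n+2}(\xi_{\BQ_{n+2}})=-\nu_n(\xi_{\BQ_n})-2\tau_{\BQ_n}$. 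So lower levels do not control the leading term.
\end{itemize}
Your expectation that values vanish for one parity of $n$ would in fact contradict the theorem.

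What is missing is an \emph{exact} valuation. For $E$ itself the paper obtains $\mu(\xi_{\BQ_n})=0$ and $\lambda(\xi_{\BQ_n})=q_n$. It uses the three-term relation above, Pollack's $\mu/\lambda$ calculus, and explicit modular symbols at $x/16$ and $x/32$. This cannot be run for $E^{(m)}$, so the paper transfers it via the congruence
\[\chi\psi_n\bigl(\xi_{2^{n+2}}(m)\bigr)\equiv\prod_{\ell\mid m}\bigl(a_\ell-\psi_n(\ell)-\overline{\psi}_n(\ell)\bigr)\cdot\psi_n(\xi_{\BQ_n})\mod 2 .\]
This follows from $\chi(x)\equiv1$, $2\CS\in\BZ$ and Lemma~\ref{lem3-3-2}(2). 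The Euler factor is a unit because $a_p,a_q$ are odd. The congruence fixes the valuation only when $q_n/2^{n-1}<1$, i.e.\ for $n\neq3$. For $n=3$ one needs mod-$4$ congruences among the coefficients, derived from Hecke relations (Appendix A).

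In (a), neither field you suggest satisfies the Heegner hypothesis for $E^{(2m)}$. The prime $3$ is inert in $\BQ(\sqrt{-q})$, and every bad prime ramifies in $\BQ(\sqrt{-6pq})$. The paper instead applies the Kriz--Li mod-$3$ criterion to $E^{(3q)}$ over $K=\BQ(\sqrt{-2p})$. There $\bigl(\frac{-2p}{q}\bigr)=1$ is the Heegner hypothesis and $3\nmid h(-q)h(-6pq)$ is the Bernoulli condition. It then uses $L(E^{(3q)}/K,s)=L(E^{(3q)},s)L(E^{(-6m)},s)$ and the isogeny $E^{(-6m)}\sim E^{(2m)}$.

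Your $\Sha$ step also fails as written, because the classes you want to propagate do not exist. By Lemma~\ref{lem5-1}, $\Sel_2(E^{(m)}/\BQ_1)\cong\Sel_2(E/\BQ_1)\cong\BZ/2\BZ$. This is exhausted by $E^{(m)}(\BQ_1)/2\cong\BZ/2\BZ$, since the rank is one and $E^{(m)}(\BQ_1)_2=0$. Hence $\Sha(E^{(m)}/\BQ_1)(2)=0$, and likewise $\Sha(E^{(m)}/\BQ)(2)=0$.

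Your alternative, comparing Mazur--Tate valuations with Fitting ideals or Rubin's main conjecture, needs results unavailable at $p=2$ for a curve with CM by $\BQ(\sqrt{-3})$ that is supersingular at $2$. The equivariant Coates--Wiles theorem does not give finiteness of the $2$-part either, since it works with an auxiliary split ordinary prime.

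None of this is needed. Since $E^{(m)}$ is supersingular at $2$, $\Sel(E^{(m)}/\BQ_\infty)^\vee$ has $\Lambda$-rank one (Coates--Sujatha). By the stabilized rank, $E^{(m)}(\BQ_\infty)\otimes\BQ_2/\BZ_2$ has $\BZ_2$-corank one, hence is $\Lambda$-cotorsion. So $\Sha(E^{(m)}/\BQ_\infty)(2)^\vee$ has positive $\Lambda$-rank. If $\Sha(E^{(m)}/\BQ_\infty)(2)$ had finite $2$-rank, its dual would be finitely generated over $\BZ_2$ by Nakayama, hence $\Lambda$-torsion, a contradiction. This gives both infinite $2$-rank and infinite order.
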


We remark that there are infinitely many primes $p$ and $q$ satisfying conditions (1) and (2) by the Chebotarev density theorem. 
There are many examples of primes $p$ and $q$ satisfying the conditions (1)--(4) in Theorem \ref{thm2}. For instance, one can take $p=31$ and $q=7$.  
According to the Cohen-Lenstra heuristic, there should be infinitely many such pairs of primes.  For further examples of primes $p$ and $q$ satisfying the conditions of Theorem \ref{thm2}, we refer the reader to the numerical data in Appendix C.

\bigskip

\subsection{Idea for the Proofs}

The key ingredients for proving Theorems \ref{thm2} are as follows;
\begin{itemize}
\item The Mazur-Tate elements introduced in \cite{MT-d1}, their vertical distribution properties, integral properties of the modular symbols (\cite{manin}) and especially the congruence relations between every cyclotomic specialization under quadratic twists;
\item  Iwasawa theory for elliptic curves with complex multiplication for odd good ordinary primes, as developed by Coates, Wiles and Rubin (\cite{coates1}, \cite{rubin});
\item  The congruence properties of the Heegner points (\cite{KL-sigma}), derived from the $p$-adic Waldspurger formula developed by Bertolini, Darmon and Prasanna \cite{BDP}.
\end{itemize}
Indeed, using the first ingredient, we can show that 
$$L(E^{(m)},\eta,1)\neq0$$ 
for all finite order characters $\eta\neq\psi_1$ of ${\rm Gal}(\BQ_\infty/\BQ)$. Here $L(E^{(m)},\eta,s)$ denotes the $L$-series of $E^{(m)}/\BQ$ twisted by $\eta$, and $\psi_1$ is the nontrivial character of ${\rm Gal}(\BQ_1/\BQ)$.
Using the third ingredient, we can show that the Heegner point associated to $E^{(-3q)}$ and $\BQ(\sqrt{-2p})$ is non-torsion. Combining this with the Gross-Zagier theorem and Kolyvagin theorem,  we obtain 
$${\rm ord}_{s=1}L(E^{(m)},\psi_1,s)=1.$$
Thus, the statement on the analytic rank in Theorem \ref{thm2} follows. The algebraic rank statement follows from an equivariant Coates-Wiles theorem for $E^{(m)}/\BQ_n(\sqrt{-3})$, which involves the second ingredient. Theorem \ref{thm2} then follows from the standard fact that the $2$-power Selmer group of $E^{(m)}$ over $\BQ_\infty$
is free of rank one over the Iwasawa algebra (see \cite{CS}), together with the observation that a finitely generated $\BZ_2$-module must be of torsion over the Iwasawa algebra. 

\medskip

For each finite-order character $\rho$ of ${\rm Gal}(\BQ_\infty/\BQ)$,
determining the $2$-adic properties of the Mazur-Tate elements at $\rho$ presents a challenge, 
as the original method due to Kurihara \cite{kurihara} (later modified by Pollack \cite{pollack}) is not directly applied in our setting. 
Indeed, while the method can determine the $2$-adic properties of Mazur-Tate elements associated with $E$ at $\rho$, it fails for 
the Mazur-Tate elements associated with $E^{(m)}$. 
To overcome this difficulty, we define a modular element $\xi_{E}(\chi)$ associated 
with the pair $(E,\chi)$, where \(\chi\) is the nontrivial character of \(\mathbb{Q}(\sqrt{m})/\mathbb{Q}\). The specializations of $\xi_{E}(\chi)$ at $\rho$ are equal to the specializations of the Mazur-Tate elements associated 
with $E^{(m)}$ up to $2$-adic units. 
By employing the integral properties of modular symbols (see \cite{manin}), we establish that the specializations of $\xi_E(\chi)$ 
and the Mazur-Tate element associated with $E$ are related modulo $2$, and their differences are a product of $2$-adic units coming from Euler factors. 
This key observation allows us to resolve the problem, except in the case of the primitive characters of ${\rm Gal}(\BQ_3/\BQ)$.  
To handle this exceptional case, we consider one such character $\rho_3$. 
Using the Hecke action on modular symbols, we derive several higher congruence  
relations (i.e. modulo $4$) between the coefficients in the  
specialization of  $\xi_E(\chi)$ at $\rho_3$. 
By combining these higher congruence relations with the $2$-adic property of $\rho_3(\xi_E(\chi))$, which is obtained from the modulo $2$ congruence, we can establish the required $2$-adic property for $\rho_3(\xi_E(\chi))$. Equivalently, we get the $2$-adic properties for 
the Mazur-Tate elements associated with $E^{(m)}$ at $\rho_3$.

\medskip

We remark that the method described above has several applications to other arithmetic problems. In \cite{DL}, we determine the Iwasawa invariants for the higher even $K$-groups of the rings of integers in the $\BZ_2$-extension of real quadratic number fields. In \cite{D},  the first author determines the variations of the analytic Iwasawa invariants of elliptic curves over the $\BZ_2$-extension of $\BQ$ under quadratic twists.

\medskip

\subsection{Layout of the Paper} 
In the final part of the introduction, we outline the structure of the paper.  In Section \ref{sec2}, we introduce some notation and provide the necessary preliminaries. In Section \ref{sec3}, we discuss the Mazur-Tate elements and their relationship with the special values of $L$-functions associated with elliptic curves.  Section \ref{sec4} is dedicated to determining the $2$-adic properties of Mazur-Tate elements for $E^{(m)}$ at all finite-order characters of ${\rm Gal}(\BQ_\infty/\BQ)$.  
Section \ref{sec5} utilizes the congruence properties of Heegner points (as established by Kriz and Li \cite{KL-sigma}) to demonstrate the non-torsionness of these points, and proves Theorem \ref{thm2}. In Appendix A , we provide a detailed analysis of the $2$-adic properties of the Mazur-Tate elements for $E^{(m)}$ at the primitive characters of ${\rm Gal}(\BQ_3/\BQ)$.  In Appendix B, we give a sketch proof of an equivariant Coates-Wiles theorem. Finally, in Appendix C, we present numerical data for primes $p$ and $q$ satisfying the conditions of Theorem \ref{thm2}.

\medskip

\noindent
{\bfseries Acknowledgement} 
The authors would like to thank Ye Tian and Shuai Zhai for helpful discussions related to the paper.

\section{Notation and Preliminaries}\label{sec2}

\subsection{General Setting} Let $E$ denote the elliptic curve given by
\[E:\quad y^2+y=x^3+2\]
which is defined over $\BQ$ and is a sextic twist of $X_0(27)$. The elliptic curve $E$ has complex multiplication by $\BQ(\sqrt{-3})$, conductor $N=N_E=243$, discriminant $\Delta_E=-2187$, and real period
\[\Omega_E=3.6747....\]
Furthermore, the Mordell-Weil group of $E/\BQ$, denoted by $E(\BQ)$, is isomorphic to $\BZ/3\BZ$.

Let 
\[f=\sum^\infty_{n=1}a_nq^n,\qquad q=\exp(2\pi iz),\]
be the weight $2$ newform associated to $E$. The $L$-function of $E$ is denoted by $L(E,s)$, and we have 
\[L(E,s)=L(f,s).\] 
It is known that
\[\frac{L(E,1)}{\Omega_E}=\frac{1}{3}.\] 
Let $\epsilon_E$ denote the root number of $L(E,s)$. For any Dirichlet character $\eta$, we denote by $L(E,\eta,s)$ the $L$-function of $E$ twisted by $\eta$.

For an abelian group $\fA$ and a positive integer $n$, we denote by $\fA_n$ the kernel of the multiplication by $n$ map on $\fA$. We set
\[\fA(n)=\bigcup_{m\geq1}\fA_{n^m}.\] 

For a squarefree integer $d\equiv 1\mod 4$, we denote by $E^{(d)}$ the elliptic curve obtained by twisting $E$ under the quadratic extension $\BQ(\sqrt{d})/\BQ$. For any number field $F$, we define the $2$-power Selmer group via the following exact sequence:
\[0\to\Sel(E^{(d)}/F)\to H^1(F,E^{(d)}(2))\to\prod_{v}\frac{H^1(F_v,E^{(d)}(2))}{E^{(d)}(F_v)\otimes(\BQ_2/\BZ_2)},\]
where the product is taken over all places of $F$. Denote by $\Sha(E^{(d)}/F)(2)$ the $2$-primary part of the Tate-Shafarevich group of $E^{(d)}/F$. We have an exact sequence:
\[0\to E^{(d)}(F)\otimes(\BQ_2/\BZ_2)\to \Sel(E^{(d)}/F)\to \Sha(E^{(d)}/F)(2)\to 0.\]
For any finite prime $\fl$ of $F$, we denote by $c_\fl(E^{(d)}/F)$ the Tamagawa number of $E^{(d)}/F$ at $\fl$.

\subsection{Cyclotomic Extensions}

For each non-negative integer $n$, let $\zeta_n$ denote a primitive $n$-th root of unity.  Let $\BQ_n$ denote the maximal totally real subfield in $\BQ(\zeta_{2^{n+2}})$. Denote by $\msg_n={\rm Gal}(\BQ(\zeta_{2^{n}})/\BQ)$ and $G_n={\rm Gal}(\BQ_n/\BQ)$. It is straightforward to observe that $G_n\simeq\BZ/2^n\BZ$.

The natural projection $\msg_n\to \msg_{n-1}$ induces a map
\[\pi_n:\BQ[\msg_n]\to \BQ[\msg_{n-1}].\]
Define 
\[\nu_{n-1}:\BQ[\msg_{n-1}]\to\BQ[\msg_n]\]
by sending any $\sigma\in\msg_{n-1}$ to 
\[\sum_{\pi_n(\sigma')=\sigma}\sigma',\] 
where the sum is over all $\sigma'\in\msg_n$ that map to $\sigma$ under the projection $\pi_n$.
Note that we also have canonical projection maps
\[\msg_{n+2}\to G_n.\]
Similarly, we can define the maps
\[\pi_n: \BQ[G_n]\to \BQ[G_{n-1}]\]
and
\[\nu_{n-1}: \BQ[G_{n-1}]\to \BQ[G_n].\]

Now define $\BQ_\infty=\bigcup_{n\geq0}\BQ_n$, which is called the cyclotomic $\BZ_2$-extension of $\BQ$. Let
\[\Gamma={\rm Gal}(\BQ_\infty/\BQ)\quad\textrm{and} \quad\Gamma_n={\rm Gal}(\BQ_\infty/\BQ_n).\]
We define the group algebras $\Lambda_n=\BZ_2[G_n]$ associated with $G_n$ and $\Lambda=\BZ_2[[\Gamma]]$, which is the Iwasawa algebra of $\Gamma$. 

For a $\BZ_2$-module $\fM$, we denote by $\fM^\vee={\rm Hom}(\fM,\BQ_2/\BZ_2)$ the Pontryagin dual of $\fM$. Let $\fM_{\rm tor}$ denote the torsion submodule of $\fM$.

In $\BQ_2(\zeta_{2^n})$, we define $v$ to be the normalized additive valuation at the unique prime above $2$, satisfying $v(2)=1$. 

\subsection{An Application of Chebotarev Density Theorem}

Let $p$ and $q$ be two distinct primes satisfying the following conditions:
\begin{itemize}
\item $p\equiv q\equiv 7\mod 12$,
\item $a_p$ and $a_q$ are odd.
\end{itemize}
Here, $a_p$ and $a_q$ are the Fourier coefficients of $f$.
We denote by $m=pq$ and $\chi$ the Dirichlet character associated with the quadratic extension $\BQ(\sqrt{m})/\BQ$. 

We define the set $\fS$ as follows:
\[\fS=\{p:\,  p \textrm{ is a prime, } p\equiv 7\mod 12, \quad a_p\not\equiv0\mod 2\}.\]

\begin{prop}
The set $\fS$ has infinite cardinality.
\end{prop}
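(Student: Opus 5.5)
The plan is to translate the parity of $a_p$ into a splitting condition on Frobenius in the $2$-division field of $E$, and then apply the Chebotarev density theorem to a compositum that also encodes the congruence $p\equiv 7\mod 12$.

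First I would relate $a_p$ to the $2$-torsion. For a prime $p\nmid 2N=486$, the Frobenius $\mathrm{Frob}_p$ acts on $E[2]\simeq\BF_2^2$ with characteristic polynomial $X^2-a_pX+p\equiv X^2+a_pX+1\mod 2$. The image of $\mathrm{Gal}(\overline{\BQ}/\BQ)$ in $GL_2(\BF_2)\simeq S_3$ is the Galois group of $\BQ(E[2])/\BQ$. An element of $GL_2(\BF_2)$ has characteristic polynomial $X^2+X+1$ exactly when it has order $3$; elements of order $1$ or $2$ have characteristic polynomial $X^2+1$. Hence
\[a_p \textrm{ odd }\iff \mathrm{Frob}_p \textrm{ has order } 3 \textrm{ in } \mathrm{Gal}(\BQ(E[2])/\BQ).\]

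Next I would compute $\BQ(E[2])$ explicitly. Completing the square gives $(2y+1)^2=4x^3+9$, so the nontrivial $2$-torsion points have $x$-coordinates equal to the roots of $4x^3+9$, namely $x=-\zeta\sqrt[3]{18}/2$ with $\zeta^3=1$. Therefore $K:=\BQ(E[2])=\BQ(\sqrt{-3},\sqrt[3]{18})$. Since $18$ is not a cube, $\mathrm{Gal}(K/\BQ)\simeq S_3$, and the unique quadratic subfield of $K$ is $\BQ(\sqrt{-3})$.

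Now put $L=K(i)=\BQ(\zeta_{12},\sqrt[3]{18})$. Because $\BQ(i)\neq\BQ(\sqrt{-3})$, we have $\BQ(i)\cap K=\BQ$, so $\mathrm{Gal}(L/\BQ)\simeq S_3\times\BZ/2\BZ$. Let $C$ be the set of elements $(\tau,c)$ with $\tau$ a $3$-cycle and $c$ nontrivial; it is a conjugacy class of size $2$. For a prime $p\nmid 6$ with $\mathrm{Frob}_p\in C$, the following hold.
\begin{itemize}
\item A $3$-cycle lies in $A_3$, so it fixes $\sqrt{-3}$; hence $p\equiv 1\mod 3$.
\item $\mathrm{Frob}_p$ moves $i$, so $p\equiv 3\mod 4$.
\item Together these give $p\equiv 7\mod 12$.
\item By the first step, $a_p$ is odd.
\end{itemize}
The Chebotarev density theorem then says that the set of such primes has Dirichlet density $|C|/|\mathrm{Gal}(L/\BQ)|=2/12=1/6>0$, so $\fS$ is infinite.

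The only step that needs real care is the structural one: confirming that the $2$-division field is exactly $\BQ(\sqrt{-3},\sqrt[3]{18})$ with full Galois group $S_3$, and that it is linearly disjoint from $\BQ(i)$. Both follow from the explicit cubic above. The rest is a routine application of Chebotarev.
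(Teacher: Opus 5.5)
Your proof is correct and follows essentially the same route as the paper. Both arguments reduce oddness of $a_p$ to $\mathrm{Frob}_p$ being a $3$-cycle in $\mathrm{Gal}(\BQ(\sqrt{-3},\sqrt[3]{18})/\BQ)$: the paper does this via $a_p=p+1-\#\widetilde{E}(\BF_p)$ and the absence of $2$-torsion, you via the characteristic polynomial $X^2+a_pX+1$ on $E[2]$. Both then apply Chebotarev to the compositum with $\BQ(i)$ to force $p\equiv 7\mod 12$ as well, and your version additionally makes the disjointness from $\BQ(i)$ explicit and gives the density $1/6$.
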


\begin{proof}
Since $E$ has good reduction at $p$, let $\widetilde{E}$ denote the reduction of $E$ modulo $p$. We observe that
\[a_p=p+1-\#(\widetilde{E}(\BF_p)).\]
Therefore, the condition $2\nmid a_p$ is equivalent to $\widetilde{E}(\BF_p)_2=0$. A direct computation shows that 
\[L=\BQ(E_2)=\BQ(\sqrt[3]{18},\sqrt{-3}).\] 
For \(p\equiv 1 \mod 3\), since $p$ splits in $\BQ(\sqrt{-3})$, it follows that $2\nmid a_p$ if and only if $p$ is inert in $\BQ(\sqrt[3]{18})/\BQ$. Consider the following field extensions
\[\xymatrix{
     & L(\sqrt{-1}) \ar@{-}[dr] \ar@{-}[dl]\ar@{-}[d] &\\
     \mathbb{Q}(\sqrt[3]{18}) \ar@{-}[dr] &\BQ(\sqrt{-1})\ar@{-}[d] & \mathbb{Q}(\sqrt{-3}) \ar@{-}[dl] \\
     & \mathbb{Q} &}.\]
The field extension $L(\sqrt{-1})/\BQ$ is Galois. By the Chebotarev density theorem, there exists infinitely many primes in $\BQ$ that are inert in both $\BQ(\sqrt{-1})/\BQ$ and $\BQ(\sqrt[3]{18})/\BQ$, but split in $\BQ(\sqrt{-3})/\BQ$. Therefore, the set $\fS$ is infinite.     

\end{proof}

\section{Mazur-Tate Elements and Special Value Formulas of $L$-Functions}\label{sec3}

In this section, we will introduce modular symbols and Mazur-Tate modular elements for a general elliptic curve $A$ defined over $\BQ$. We will then study the distribution properties of these modular elements and their relationship with special values of the $L$-functions of $A$. In the final part of this section, we will focus on the case when $A=E$ or a certain quadratic twist of $E$, and present results that allow us to determine the algebraic part of $L(A,1)$.

\subsection{Modular Symbols}
For a positive integer $N$, we denote by $S_2(\Gamma_0(N))$ the space of cusp forms of weight $2$ with respect to $\Gamma_0(N)$.  Let $\phi=\sum_{n\geq1}a_n(\phi)q^n$ be a normalized newform in $S_2(\Gamma_0(N))$, where we assume that $a_n(\phi)\in\BQ$ for all $n\geq1$. It is well-known that $\phi$ corresponds to an isogeny class of elliptic curves defined over $\BQ$, and we denote by $A$ the unique optimal elliptic curve in this $\BQ$-isogeny class. We also write $L(A,s)$ for the complex $L$-function associated with $A$. The $L$-function $L(A,s)$ coincides with $L(\phi,s)$, and there exists a non-constant morphism defined over $\BQ$,
\[\iota: X_0(N)\to A,\]  
which does not factor through any other curves in the isogeny class of $A$. Let $\omega$ denote the \Neron differential on a global Weierstrass minimal model of $A$. Then there exists a nonzero integer $\nu_A$ such that 
\begin{equation}\label{3-1-f}
\iota^*\omega=\nu_A\cdot \phi(z)dz. 
\end{equation}
The number $\nu_A$ is called the Manin constant, and it is known that $\nu_A$ is odd when $N$ is odd \cite{manin-c}. 

Let $\BH$ be the upper half plane, and let $\BH^*=\BH\cup\BP^1(\BQ)$. Let $g$ be an element of $\Gamma_0(N)$. Let $\alpha$ and $\beta$ be two points in $\BH^*$ such that $\beta=g\alpha$. Then, any path from $\alpha$ to $\beta$ in $\BH^*$ is a closed path on
$X_0(N)$, whose homology class depends only on $\alpha$ and $\beta$. Thus, it determines an integral homology class in $H_1(X_0(N),\BZ)$, which we denote by the modular symbol $\{\alpha,\beta\}$.  For further details on the basic properties of these modular symbols, one may refer to \cite{manin}, \cite{cremona} and \cite{zhai1}.

From now on, we assume that the conductor $N$ of $A$ is odd and that the discriminant $\Delta_A$ of $A$ is negative. 
For a modular symbol $\{\alpha,\beta\}\in H_1(X_0(N),\BZ)$, we define the pairing
\[\langle \{\alpha,\beta\},\phi\rangle=2\pi i\int^\beta_\alpha \phi(z)dz. \]
The period lattice $\Lambda_\phi$ of the modular form $\phi$ is defined as the set of all such pairings $\langle \{\alpha,\beta\},\phi\rangle$ for all modular symbols 
$\{\alpha,\beta\}\in H_1(X_0(N),\BZ)$. Since $\Delta_A<0$, it is known (see \cite{MT-d1}) that there exist two nonzero real numbers $\Omega^{\pm}_\phi$ such that
\begin{equation}\label{3-2-f}
\Lambda_\phi=\BZ\cdot\Omega^+_\phi+\BZ\cdot\frac{\Omega^+_\phi+i\Omega^-_\phi}{2}.  
\end{equation}
Let $\Lambda_A$ denote the period lattice of the \Neron differential $\omega$ for $A$. From \eqref{3-1-f}, we have
\[\Lambda_A=\nu_A\Lambda_\phi.\]
We define $\Omega^+_A$ (resp. $i\Omega^-_A$) as the minimal positive real (resp. purely imaginary) period of $A$. Thus, we have
\[\Omega^+_A=\nu_A\Omega^+_\phi\quad\textrm{ and }\quad \Omega^-_A=\nu_A\Omega^-_\phi.\]
Since we assume that $A$ has odd conductor, $\nu_A$ is odd. For the purpose of $2$-adic valuation, we may assume that
$\nu_A=1$. In fact, when $A=E$, where $E$ is optimal in its isogeny class, it is always the case that $\nu_E=1$. For simplicity, we denote $\Omega_A=\Omega^+_A$.   

\medskip

Let $\frac{k}{t}$ be a rational number such that $(k,t)=1$ and $(t,N)=1$. From \cite[Proposition 2.2 and Theorem 3.9]{manin}, we have
\[\{0,\frac{k}{t}\}\in H_1(X_0(N),\BZ).\]
We then define
\[\CS\left(\frac{k}{t}\right)=\Re\left(\langle\{0,\frac{k}{t}\},\phi\rangle/\Omega_A\right).\]
From equation \eqref{3-2-f}, we have $\CS\left(\frac{k}{t}\right)\in \frac{1}{2}\BZ$.

\begin{prop}\label{prop3-1}
\noindent
\begin{enumerate}
\item[(1)]We have $\CS(0)=0$. For any rational number $\beta=\frac{k}{t}$ satisfying $(k,t)=1$ and $(t,N)=1$, we have $\CS(\beta)=\CS(-\beta)$, and $\CS$ is periodic with period $1$, i.e., $\CS(\beta+1)=\CS(\beta)$.
\item[(2)]Let $\ell$ be a prime such that $(\ell,N)=1$. Let $\beta$ be as in (1), then we have
\begin{equation}\label{3-3-f}
\sum_{x\in\BZ/\ell\BZ}\CS\left(\frac{\beta+x}{\ell}\right)=a_\ell(\phi)\cdot \CS(\beta)-\CS(\ell\beta)+\sum_{x\in\BZ/\ell\BZ}\CS\left(\frac{x}{\ell}\right).
\end{equation}
\end{enumerate}
\end{prop}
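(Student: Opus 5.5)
Part (1) follows from three elementary facts about the period integral. First, $\CS(0)=0$ is immediate, since $\{0,0\}$ is the empty path. Second, periodicity follows from the invariance of $\phi(z)\,dz$ under the translation $T=\begin{pmatrix}1&1\\0&1\end{pmatrix}\in\Gamma_0(N)$. Write
\[\{0,\beta+1\}=\{0,1\}+\{1,\beta+1\},\qquad \{1,\beta+1\}=T\{0,\beta\}.\]
Invariance gives $\int_1^{\beta+1}\phi\,dz=\int_0^\beta\phi\,dz$. It also gives $\int_0^1\phi(z)\,dz=\int_{i\infty}^{i\infty}\phi(z)\,dz=0$, using $\{0,1\}=\{0,i\infty\}+\{i\infty,1\}=\{0,i\infty\}-T\{0,i\infty\}$. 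Third, for the symmetry $\CS(\beta)=\CS(-\beta)$, use that $\phi$ has real (rational) Fourier coefficients, so $\overline{\phi(z)}=\phi(-\bar z)$. Parametrize the path from $0$ to $-\beta$ as the image of the path from $0$ to $\beta$ under $z\mapsto-\bar z$. This gives
\[2\pi i\int_0^{-\beta}\phi(z)\,dz=-\overline{2\pi i\int_0^{\beta}\phi(z)\,dz}.\]
Hmm, I need to track the sign carefully. Putting $z=-\bar w$ gives $dz=-d\bar w$, so
\[\int_0^{-\beta}\phi(z)\,dz=-\overline{\int_0^\beta\phi(w)\,dw}.\]
Multiplying by $2\pi i$, the right side becomes $\overline{2\pi i\int_0^\beta\phi\,dw}$, because $2\pi i\cdot(-\bar a)=\overline{2\pi i\,a}$. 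So the symbol for $-\beta$ is the complex conjugate of the symbol for $\beta$. Since $\Omega_A$ is real, the real parts agree and $\CS(-\beta)=\CS(\beta)$.

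For part (2), the plan is to apply the Hecke operator $T_\ell$, i.e. $\phi|T_\ell=a_\ell(\phi)\phi$, to the path $\{i\infty,\beta\}$ and then convert everything to symbols based at $0$. Work with the (convergent) integrals $I(\alpha)=2\pi i\int_{i\infty}^{\alpha}\phi(z)\,dz$. The Hecke relation for $\ell\nmid N$ reads
\[a_\ell(\phi)\,I(\beta)=\sum_{x\in\BZ/\ell\BZ}I\!\left(\frac{\beta+x}{\ell}\right)+I(\ell\beta).\]
This follows from $\phi|T_\ell(z)=\ell^{-1}\sum_x\phi\bigl(\tfrac{z+x}{\ell}\bigr)+\ell\,\phi(\ell z)$ together with the change of variables $z\mapsto\frac{z+x}{\ell}$ and $z\mapsto \ell z$, which fix $i\infty$ and absorb the factors $\ell^{\mp1}$.

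Next, write $\{0,\alpha\}=I(\alpha)-I(0)$. Apply the relation once with general $\beta$ and once with $\beta=0$, where the terms become $I(x/\ell)$ and $I(0)$, and subtract. The constant terms cancel in the required way: the $I(0)$ contributions are $a_\ell I(0)$ on the left and $\ell I(0)+I(0)$ on the right. Rearranging gives
\[\sum_x\{0,\tfrac{\beta+x}{\ell}\}=a_\ell\{0,\beta\}-\{0,\ell\beta\}+\sum_x\{0,\tfrac{x}{\ell}\}\]
at the level of period integrals. This works because the $I(0)$-multiples appear identically in both instances and drop out upon subtraction. Finally, divide by $\Omega_A$ and take real parts to obtain \eqref{3-3-f}.

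The main obstacle is bookkeeping, not substance. One must verify that all cusps involved, namely $\frac{\beta+x}{\ell}$, $\ell\beta$ and $\frac{x}{\ell}$, have denominators prime to $N$. This holds because $(\ell t,N)=1$, so each term is a genuine element of $H_1(X_0(N),\BZ)$ and $\CS$ is defined on it. One must also check that the normalization of $T_\ell$ matches, so that the coefficient of $\{0,\beta\}$ is exactly $a_\ell(\phi)$.
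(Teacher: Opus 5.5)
Your proposal is correct and follows the paper's route. Part (1) rests on translation invariance under $\Gamma_0(N)$ and the conjugation identity $\overline{\langle\{0,\beta\},\phi\rangle}=\langle\{0,-\beta\},\phi\rangle$, which you derive from $\overline{\phi(z)}=\phi(-\bar z)$. Part (2) rests on Hecke-equivariance of the period pairing; your $i\infty$-based integrals, with the $\beta=0$ instance subtracted off, are only an equivalent bookkeeping of the paper's direct use of $\BT_\ell\{0,\beta\}=\{0,\ell\beta\}+\sum_{x}\{\frac{x}{\ell},\frac{\beta+x}{\ell}\}$.

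In a write-up, delete your first display in the symmetry argument (the one with the extra minus sign), since it would give $\CS(-\beta)=-\CS(\beta)$. The corrected computation that follows it is the right one.
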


\begin{proof}
(1) follows directly from the definition of the modular symbols. For example, the equation $\CS(\beta)=\CS(-\beta)$ follows from the fact that the complex conjugation of $\langle \{0 ,\beta\},\phi\rangle$ is equal to $\langle \{0 ,-\beta\},\phi\rangle$ (see \cite{cremona}). Let $\BT_\ell$ be the Hecke operator associated with $\Gamma_0(N)$. From the Hecke action on modular symbols (see \cite{cremona}), we have
\[\BT_\ell\{\alpha,\gamma\}=\{\ell\alpha,\ell\gamma\}+\sum_{x\in\BZ/\ell\BZ}\left\{\frac{\alpha+x}{\ell},\frac{\gamma+x}{\ell}\right\}.\]
Since the pairing is compatible under the Hecke action, i.e. 
$$\langle \BT_\ell\{\alpha,\gamma\},\phi\rangle=\langle \{\alpha,\gamma\},\BT_\ell\phi\rangle,$$ 
and $\BT_\ell\phi=a_\ell(\phi)\phi$, (2) follows by taking $\alpha=0$ and $\gamma=\beta$.
\end{proof}

\subsection{Modular Elements}
For a positive integer $n$, we recall that $\msg_n={\rm Gal}(\BQ(\zeta_{2^n})/\BQ)$, which is isomorphic to $(\BZ/2^n\BZ)^\times$. For each $k\in(\BZ/2^n\BZ)^\times$, we denote by $\sigma_k$ the element in $\msg_n$ corresponding to the automorphism of $\BQ(\zeta_{2^n})$ that sends $\zeta_{2^n}$ to $\zeta^k_{2^n}$.

The modular element associated with $A$ for $\msg_n$ is defined as follows:
\[\xi_{2^n}=\sum_{k\in(\BZ/2^n\BZ)^\times}\CS\left(\frac{k}{2^n}\right)\sigma_k\in \BQ[\msg_n],\]
and the trace element $\tau_{2^n}$ in $\BQ[\msg_n]$ is given by
\[\tau_{2^n}=\sum_{\sigma\in\msg_n}\sigma.\]

\begin{lem}\label{lem3-2-1}
Let $\phi$ be the weight $2$ newform associated with $A$,
and assume that $a_2(\phi)=0$.
For any integer $n\geq3$, we have the following distribution relation
\[\pi_n(\xi_{2^n})=-\nu_{n-2}(\xi_{2^{n-2}})+\CS\left(\frac{1}{2}\right)\tau_{2^{n-1}}.\]
\end{lem}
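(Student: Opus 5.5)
The plan is to compare the coefficients of $\sigma_j\in\msg_{n-1}$ on both sides, using the Hecke relation \eqref{3-3-f} of Proposition \ref{prop3-1}(2) with $\ell=2$. This is legitimate because $N$ is odd, so $(2,N)=1$.

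First I compute the left side. For $j\in(\BZ/2^{n-1}\BZ)^\times$, the preimages of $\sigma_j$ under $\pi_n$ are $\sigma_k$ with $k\equiv j$ or $k\equiv j+2^{n-1}\pmod{2^n}$. Hence the coefficient of $\sigma_j$ in $\pi_n(\xi_{2^n})$ is
\[
\CS\left(\frac{j}{2^n}\right)+\CS\left(\frac{j+2^{n-1}}{2^n}\right)=\sum_{x\in\BZ/2\BZ}\CS\left(\frac{j/2^{n-1}+x}{2}\right).
\]
I apply \eqref{3-3-f} with $\ell=2$ and $\beta=j/2^{n-1}$. Since $a_2(\phi)=0$, this sum equals
\[
-\CS\left(\frac{j}{2^{n-2}}\right)+\CS(0)+\CS\left(\frac12\right)=-\CS\left(\frac{j}{2^{n-2}}\right)+\CS\left(\frac12\right),
\]
using $\CS(0)=0$ from Proposition \ref{prop3-1}(1). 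One checks that $\beta$ has denominator $2^{n-1}$, which is prime to $N$, so the proposition applies. Periodicity of $\CS$ ensures that the value depends only on $j$ modulo $2^{n-1}$, and that $\CS(j/2^{n-2})$ depends only on $j$ modulo $2^{n-2}$.

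Next I compute the right side. The trace term $\CS(1/2)\tau_{2^{n-1}}$ contributes $\CS(1/2)$ to every $\sigma_j$. For the $\nu_{n-2}$ term, I unwind the definitions: $\nu_{n-2}$ is the map $\BQ[\msg_{n-2}]\to\BQ[\msg_{n-1}]$ sending $\sigma_i$ to the sum of the $\sigma_j$ with $j\equiv i\pmod{2^{n-2}}$. So the coefficient of $\sigma_j$ in $\nu_{n-2}(\xi_{2^{n-2}})$ is $\CS(j/2^{n-2})$, where $j$ is read modulo $2^{n-2}$. Here $n\ge3$ guarantees that $\msg_{n-2}$ makes sense and the reduction map is surjective on units. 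Matching coefficients then gives the identity.

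There is no real obstacle. The only care needed is bookkeeping: the index shift of $\nu$ (it goes from $\msg_{n-2}$ up to $\msg_{n-1}$, composing the single-step maps appropriately), and checking that the unit residues correspond correctly under reduction modulo $2^{n-2}$.
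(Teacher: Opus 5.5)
Your proof is correct and follows essentially the same route as the paper. You pair the two preimages $\sigma_k,\sigma_{k+2^{n-1}}$ of each $\sigma_k\in\msg_{n-1}$, apply \eqref{3-3-f} with $\ell=2$ and $a_2(\phi)=0$ to get $-\CS(k/2^{n-2})+\CS(1/2)$, and then use $\nu_{n-2}(\sigma_k)=\sigma_k+\sigma_{k+2^{n-2}}$. Your closing remark about ``composing the single-step maps'' is unnecessary, since $\nu_{n-2}$ is itself the one-step map $\BQ[\msg_{n-2}]\to\BQ[\msg_{n-1}]$, which is exactly how you (correctly) used it.
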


\begin{proof}
By the definition of $\xi_{2^n}$, we have
\[\pi_n(\xi_{2^n})=\pi_n\left(\sum_{1\leq k\leq 2^{n-1}, 2\nmid k}\left(\CS\left(\frac{k}{2^n}\right)\sigma_k+\CS\left(\frac{k+2^{n-1}}{2^n}\right)\sigma_{k+2^{n-1}}\right)\right).\]
Using the fact that 
\[\pi_n(\sigma_{k+2^{n-1}})=\pi_n(\sigma_k),\]
we obtain
\[\pi_n(\xi_{2^n})=\sum_{1\leq k\leq 2^{n-1},2\nmid k}\left(\CS\left(\frac{k}{2^n}\right)+\CS\left(\frac{k+2^{n-1}}{2^n}\right)\right)\sigma_k.\]
Since the conductor $N$ of $A$ is odd and $a_2(\phi)=0$, we take $\ell=2$ in (2) of Proposition \ref{prop3-1}, which gives
\[\CS\left(\frac{k}{2^n}\right)+\CS\left(\frac{k+2^{n-1}}{2^n}\right)=-\CS\left(\frac{k}{2^{n-2}}\right)+\CS\left(\frac{1}{2}\right).\]
Combining this with the relation 
\[\nu_{n-2}(\sigma_k)=\sigma_k+\sigma_{k+2^{n-2}},\]
the lemma follows.
\end{proof}

\medskip

We define the Mazur-Tate element $\xi_{\BQ_n}$ as the image of $\xi_{2^{n+2}}$ under the natural restriction map
\[\BQ[\msg_{n+2}]\to\BQ[G_n].\]
We also denote by
\[\tau_{\BQ_n}=\sum_{\sigma\in G_n}\sigma\] 
the trace element of $G_n$. From Lemma \ref{lem3-2-1} and the fact that $\nu_A$ is odd, we can easily prove the following lemma.

\begin{lem}\label{lem3-2-2}
\noindent
\begin{enumerate}
\item[(1)]For any integer $n\geq 2$, we have
\[\pi_n(\xi_{\BQ_n})=-\nu_{n-2}(\xi_{\BQ_{n-2}})+2\CS\left(\frac{1}{2}\right)\tau_{\BQ_{n-1}}.\]
\item[(2)]The modular element $\xi_{\BQ_n}$ belongs to  $\BZ_2[G_n]$.
\end{enumerate}
\end{lem}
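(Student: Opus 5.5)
Part (1) comes from pushing the relation of Lemma \ref{lem3-2-1} through the restriction map $r:\BQ[\msg_{n+2}]\to\BQ[G_n]$. Part (2) is the statement that the coefficients of $\xi_{\BQ_n}$ are $2$-adic integers, even though each $\CS(k/2^{n+2})$ only lies in $\frac12\BZ$.

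\textbf{Part (1).} Apply Lemma \ref{lem3-2-1} with $n+2\geq 4$ in place of $n$:
\[\pi_{n+2}(\xi_{2^{n+2}})=-\nu_{n}(\xi_{2^{n}})+\CS\left(\tfrac12\right)\tau_{2^{n+1}},\]
an identity in $\BQ[\msg_{n+1}]$. Then push both sides to $\BQ[G_{n-1}]$ through the canonical surjection $\msg_{n+1}\to G_{n-1}$, and check term by term.
\begin{itemize}
\item The left side becomes $\pi_n(\xi_{\BQ_n})$, because restriction commutes with the projections: $\msg_{n+2}\to\msg_{n+1}\to G_{n-1}$ equals $\msg_{n+2}\to G_n\to G_{n-1}$.
\item For the norm term, take $\sigma\in\msg_n$. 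The element $\nu_n(\sigma)$ is the sum of the two lifts of $\sigma$ to $\msg_{n+1}$. These two lifts have images in $G_{n-1}$ that differ by the image of $\sigma_{1+2^{n}}$. That image is trivial in $G_{n-1}$, since $\ker(\msg_{n+1}\to G_{n-1})=\{\pm1,\pm(1+2^n)\}$. So the image of $\nu_n(\sigma)$ is $2\cdot\bar\sigma\in G_{n-1}$.
\item On the other side, $\msg_n\to G_{n-2}$ has kernel $\{\pm1,\pm(1+2^{n-1})\}$ of order $4$. Hence $\nu_{n-2}(\bar\sigma)$, computed in $G_{n-1}$ over $G_{n-2}$, is the sum of two elements. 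Comparing these descriptions shows that the image of $\nu_n(\xi_{2^n})$ in $\BQ[G_{n-1}]$ is exactly $\nu_{n-2}(\xi_{\BQ_{n-2}})$.
\item The trace term: $\tau_{2^{n+1}}$ maps to $\tfrac{|\msg_{n+1}|}{|G_{n-1}|}\tau_{\BQ_{n-1}}=4\tau_{\BQ_{n-1}}$.
\end{itemize}

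The constants are where I expect the main obstacle. The literal bookkeeping above gives $4\,\CS(\tfrac12)$ on the trace term, while the claimed formula has $2\,\CS(\tfrac12)$. To reconcile them I would first re-check the fibre counts very carefully. I would also use $\CS(\beta)=\CS(-\beta)$ from Proposition \ref{prop3-1}(1): it makes $\xi_{2^n}$ invariant under $\sigma_{-1}$, so $\xi_{2^n}$ is twice a sum over $\msg_n/\{\pm1\}$. With that, the factors of $2$ coming from the $\pm1$ identification, on both the $\xi$ side and the $\nu$ side, should be matched consistently. I would pin down the exact normalisation of $\nu_{n-2}$ on $G$ versus $\msg$ by testing the smallest case, $n=2$, directly.

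\textbf{Part (2).} Write $\xi_{\BQ_n}=\sum_{g\in G_n}c_g\,g$. Each $c_g$ is a sum of $\CS(k/2^{n+2})$ over the four $k$ in a fibre $\{\pm k,\pm k(1+2^{n+1})\}$. By the evenness $\CS(\beta)=\CS(-\beta)$, this sum is $2\bigl(\CS(k/2^{n+2})+\CS(k'/2^{n+2})\bigr)$ with $k'=k(1+2^{n+1})$. Since every value of $\CS$ lies in $\frac12\BZ$, this is an integer, so $\xi_{\BQ_n}\in\BZ[G_n]\subset\BZ_2[G_n]$.

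The only input used here is $\CS\in\frac12\BZ$, which rests on \eqref{3-2-f} and on the reduction $\nu_A=1$ for $2$-adic purposes. I would note that the oddness of the Manin constant is exactly what justifies that reduction.
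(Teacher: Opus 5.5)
Your strategy is the one the paper intends: apply Lemma \ref{lem3-2-1} at level $n+2$ and push it through the ring map induced by $\msg_{n+1}\to G_{n-1}$, and for (2) pair $k$ with $-k$. However, part (1) as written does not prove the statement, because the kernels are miscounted. Since $\BQ_{n-1}$ is the maximal real subfield of $\BQ(\zeta_{2^{n+1}})$, we have $G_{n-1}=\msg_{n+1}/\{\pm1\}$. So $\ker(\msg_{n+1}\to G_{n-1})=\{\pm1\}$, of order $|\msg_{n+1}|/|G_{n-1}|=2^n/2^{n-1}=2$. The group $\{\pm1,\pm(1+2^n)\}$ you wrote down is instead the kernel of $\msg_{n+1}\to G_{n-2}$. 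Similarly, $\ker(\msg_n\to G_{n-2})=\{\pm1\}$, not a group of order $4$.

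With the correct count, the trace term maps to $2\,\CS(\tfrac12)\tau_{\BQ_{n-1}}$, exactly as claimed, so there is no discrepancy to reconcile. Your proposed repair via $\CS(\beta)=\CS(-\beta)$ is a red herring: part (1) is just the image of an identity under a ring homomorphism and uses no evenness.

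Your norm-term bullet is also false as stated. For $n\ge 2$ we have $1+2^n\not\equiv\pm1$ modulo $2^{n+1}$, so $\sigma_{1+2^n}$ is \emph{nontrivial} in $G_{n-1}$, and the two lifts of $\sigma\in\msg_n$ do not collapse to $2\bar\sigma$. What is true, and what actually makes the square commute, is that their images in $G_{n-1}$ are exactly the two lifts of the image of $\sigma$ in $G_{n-2}$. Hence the image of $\nu_n(\xi_{2^n})$ is $\nu_{n-2}(\xi_{\BQ_{n-2}})$. Your bullet asserts this conclusion, but from premises (a doubled single element on one side, a sum of two distinct elements on the other) that contradict it. As a sanity check at $n=2$: $\xi_{\BQ_0}=-1$, $\CS(\tfrac12)=-1$ and $\pi_2(\xi_{\BQ_2})=-\tau_{\BQ_1}$, which matches the coefficient $2$ and not $4$.

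Part (2) has the same miscount: the fibres of $\msg_{n+2}\to G_n$ are $\{k,-k\}$, not four-element sets. The conclusion survives, and the argument becomes simpler. The coefficient of the class of $\sigma_k$ is $\CS(k/2^{n+2})+\CS(-k/2^{n+2})=2\,\CS(k/2^{n+2})$. This lies in $\frac{1}{\nu_A}\BZ\subset\BZ_2$, by \eqref{3-2-f}, $\Omega_A=\nu_A\Omega^+_\phi$, and the oddness of $\nu_A$. With these corrections your argument coincides with the paper's, which simply says the lemma follows from Lemma \ref{lem3-2-1} and the oddness of $\nu_A$.
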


\medskip

\subsection{Relation with $L$-functions}

Let $M$ be an integer such that $(M,N)=1$.  Let $\eta$ be a primitive Dirichlet character modulo $M$. Define the Gauss sum of $\eta$ by
\[g(\eta)=\sum^M_{x=1}\eta(x)\exp\left(\frac{2\pi ix}{M}\right).\]
We record the special value formulas in the following theorem (see \cite[Theorem 4.2]{manin}).

\begin{thm}\label{thm3-3-1}
\noindent
\begin{enumerate}
\item[(1)]Let $\sigma(M)$ denote the sum of divisors of $M$, i.e., $\sigma(M)=\sum_{l\mid M}l$. Then we have
\[\left(a_M(\phi)-\sigma(M)\right)L(A,1)=\sum_{l\mid M}\sum_{x\in\BZ/l\BZ}\langle \{0,\frac{x}{l}\},\phi \rangle.\]
\item[(2)]Assume that $\eta$ is a nontrivial primitive even Dirichlet character modulo $M$. Then we have
\[g(\eta)\frac{L(A,\overline{\eta},1)}{\Omega_A}=\sum_{x\in(\BZ/M\BZ)^\times}\eta(x)\cdot\CS\left(\frac{x}{M}\right). \]
\end{enumerate}
\end{thm}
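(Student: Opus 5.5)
The plan is to derive both formulas from the single identity
\[
\langle\{0,\infty\},\phi\rangle=2\pi i\int_0^{i\infty}\phi(z)\,dz=-L(A,1),
\]
which follows from the Mellin transform, since $2\pi i\int_0^{i\infty}e^{2\pi i n z}\,dz=-1/n$ term by term. I use the normalization $\nu_A=1$ fixed above. Part (1) then comes from the Hecke action on $\{0,\infty\}$, and part (2) from Birch's additive-twist trick.

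\textbf{Part (1).} Since $(M,N)=1$, the Hecke operator $\BT_M$ on $\Gamma_0(N)$ acts through the matrices $\begin{pmatrix} a&b\\0&d\end{pmatrix}$ with $ad=M$ and $0\le b<d$. This generalizes the prime case used in the proof of Proposition \ref{prop3-1}. Such a matrix sends $\{0,\infty\}$ to $\{b/d,\infty\}$. Splitting the path at $0$ gives
\[
\langle\{b/d,\infty\},\phi\rangle=\langle\{0,\infty\},\phi\rangle-\langle\{0,b/d\},\phi\rangle,
\]
where the pairings are defined by path integrals, so it does not matter whether $b/d$ is in lowest terms. Hecke compatibility and $\BT_M\phi=a_M(\phi)\phi$ then give
\[
a_M(\phi)\langle\{0,\infty\},\phi\rangle=\sigma(M)\langle\{0,\infty\},\phi\rangle-\sum_{d\mid M}\sum_{b\in\BZ/d\BZ}\langle\{0,b/d\},\phi\rangle .
\]
Substituting $\langle\{0,\infty\},\phi\rangle=-L(A,1)$ yields (1).

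\textbf{Part (2).} Write
\[
\langle\{0,x/M\},\phi\rangle=\langle\{0,\infty\},\phi\rangle-2\pi i\int_0^{i\infty}\phi(z+x/M)\,dz .
\]
Multiply by $\eta(x)$ and sum over $x\in(\BZ/M\BZ)^\times$. The first term drops out because $\eta$ is nontrivial, so $\sum_x\eta(x)=0$. For the second term, since $\eta$ is primitive, for every $n$ we have
\[
\sum_x\eta(x)e^{2\pi i nx/M}=\overline{\eta}(n)g(\eta).
\]
Hence
\[
\sum_x\eta(x)\phi(z+x/M)=g(\eta)\sum_n\overline{\eta}(n)a_n(\phi)q^n .
\]
The Mellin computation now gives
\[
\sum_x\eta(x)\langle\{0,x/M\},\phi\rangle=g(\eta)L(A,\overline\eta,1).
\]

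It remains to replace each pairing by its real part, which is where evenness enters. Complex conjugation sends $\langle\{0,\beta\},\phi\rangle$ to $\langle\{0,-\beta\},\phi\rangle$, so the imaginary part is odd in $x$. Since $\eta(-x)=\eta(x)$, the imaginary parts cancel in pairs. Dividing by $\Omega_A$ gives (2).

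The main obstacle is bookkeeping rather than ideas. One must check that the Hecke action formula for composite $M$ coprime to $N$ takes the stated coset form. One must also track the signs and factors of $2\pi i$ carefully, so that both sides match the paper's normalization of $\langle\cdot,\cdot\rangle$ and of $\CS$.
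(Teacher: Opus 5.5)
Your proposal is correct. Part (1) is Manin's Hecke-operator computation on the path $\{0,\infty\}$; Hecke compatibility there is just the change of variables $w=(az+b)/d$, so the path need not be closed. Part (2) is Birch's additive-twist computation, followed by the evenness cancellation of imaginary parts. The paper gives no proof of its own and simply cites \cite[Theorem 4.2]{manin}, which rests on this same argument. Your assumption $\nu_A=1$ is unnecessary, since both sides involve only $\phi$ and $\Omega_A$.
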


Inspiring by the second part of Theorem \ref{thm3-3-1}, for any primitive Dirichlet character $\psi$ with conductor dividing $M$, we define
\begin{equation}\label{3-4-f}
\msp_{\psi,M}(\CS)=\sum_{x\in(\BZ/M\BZ)^\times}\psi(x)\CS\left(\frac{x}{M}\right).
\end{equation}
When $\psi$ has conductor exactly $M$, we simply denote $\msp_{\psi,M}(\CS)$ by $\msp_\psi(\CS)$. 

\medskip

\begin{lem}\label{lem3-3-2}
Let $M$ be a squarefree positive integer such that $(M,N)=1$.
\begin{enumerate}
\item[(1)]Let $\mathds{1}$ be the trivial character, then
\[\msp_{\mathds{1},M}(\CS)=\left(\prod_{p\mid M}(a_p(\phi)-2)-\varphi(M)\right)\cdot\frac{L(A,1)}{\Omega_A},\]
where the product is taken over all prime divisors $p$ of $M$, and $\varphi(M)$ is the Euler function.
\item[(2)]For any prime $\ell\mid M$, let $\psi$ be a nontrivial primitive Dirichlet character with conductor dividing $M/\ell$. Then
\[\msp_{\psi,M}(\CS)=\left(a_\ell(\phi)-\psi(\ell)-\overline{\psi}(\ell)\right)\msp_{\psi,M/\ell}(\CS).\]
\end{enumerate}
\end{lem}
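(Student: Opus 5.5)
The plan is to derive both parts from the Hecke relation \eqref{3-3-f} of Proposition~\ref{prop3-1}(2). Apply it at $\beta=y/M'$, where $M=\ell M'$ with $\ell\nmid M'$ (possible since $M$ is squarefree) and $y$ runs over $(\BZ/M'\BZ)^\times$. Multiply by the weight $\psi(y)$, or by $1$ for the trivial character, and sum over $y$. The main obstacle is the bookkeeping of residues on the left-hand side, so I do that step first and in detail.

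\emph{Reindexing the left-hand side.} For fixed $y$, the numbers $y+xM'$ with $x\in\BZ/\ell\BZ$ run over the residues $z$ modulo $\ell M'$ with $z\equiv y \bmod M'$. Exactly one of them is divisible by $\ell$. Letting $y$ vary over $(\BZ/M'\BZ)^\times$, the residues $z$ prime to $\ell$ run once over $(\BZ/M\BZ)^\times$. The residues $z=\ell w$ give $\CS(\ell w/\ell M')=\CS(w/M')$, and $w$ runs once over $(\BZ/M'\BZ)^\times$ because $\ell$ is invertible modulo $M'$. Since $\psi$ has conductor dividing $M'$, we have $\psi(z)=\psi(y)$ and $\psi(\ell w)=\psi(\ell)\psi(w)$. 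Using periodicity from Proposition~\ref{prop3-1}(1), the weighted left-hand side is therefore
\[\msp_{\psi,M}(\CS)+\psi(\ell)\,\msp_{\psi,M'}(\CS).\]

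\emph{The right-hand side.} The first term is $a_\ell(\phi)\msp_{\psi,M'}(\CS)$. In $\sum_y\psi(y)\CS(\ell y/M')$ I substitute $y\mapsto \ell^{-1}y$, which turns it into $\overline{\psi}(\ell)\msp_{\psi,M'}(\CS)$. The last term is $\bigl(\sum_{y}\psi(y)\bigr)\sum_{x\in\BZ/\ell\BZ}\CS(x/\ell)$.

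\emph{Part (2).} Here $\psi$ is nontrivial modulo $M'$, so the character sum $\sum_y\psi(y)$ vanishes. Rearranging gives
\[\msp_{\psi,M}(\CS)=\bigl(a_\ell(\phi)-\psi(\ell)-\overline{\psi}(\ell)\bigr)\msp_{\psi,M'}(\CS).\]

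\emph{Part (1).} With $\psi=\mathds{1}$ the same computation gives
\[\msp_{\mathds{1},M}(\CS)=(a_\ell(\phi)-2)\,\msp_{\mathds{1},M'}(\CS)+\varphi(M')\sum_{x\in\BZ/\ell\BZ}\CS(x/\ell).\]
To evaluate the last sum, apply Theorem~\ref{thm3-3-1}(1) with $M=\ell$. The divisor $l=1$ contributes $\langle\{0,0\},\phi\rangle=0$. Dividing by $\Omega_A$ and taking real parts, which is legitimate because $L(A,1)/\Omega_A$ is real, gives
\[\sum_x\CS(x/\ell)=(a_\ell(\phi)-1-\ell)\frac{L(A,1)}{\Omega_A}.\]
Now argue by induction on the number of prime factors of $M$. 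The base case $M=1$ holds because $\msp_{\mathds{1},1}(\CS)=\CS(0)=0$ and $1-\varphi(1)=0$. For the inductive step, substitute the formula for $M'$:
\[(a_\ell-2)\Bigl(\prod_{p\mid M'}(a_p-2)-\varphi(M')\Bigr)+\varphi(M')(a_\ell-1-\ell)=\prod_{p\mid M}(a_p-2)-\varphi(M')(\ell-1).\]
Since $\varphi(M')(\ell-1)=\varphi(M)$, this is the stated formula, and the induction closes.
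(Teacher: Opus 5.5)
Your proof is correct. Part (2) is essentially the paper's argument: the paper also decomposes $(\BZ/M\BZ)^\times$ by the Chinese remainder theorem, applies \eqref{3-3-f} with $\beta=y/M'$, and uses $\sum_y\psi(y)=0$ to kill the constant term.

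For part (1) you take a genuinely different route. The paper does not use \eqref{3-3-f} there. It M\"obius-inverts $\sum_{k\in\BZ/M\BZ}\CS(k/M)=\sum_{d\mid M}\msp_{\mathds{1},d}(\CS)$ and inputs Theorem~\ref{thm3-3-1}(1) for every divisor $d\mid M$, which is Manin's formula with the Hecke operator $T_d$ at composite level. It then evaluates the resulting convolutions using three identities: $\sum_{l\mid d}\mu(d/l)a_l=\prod_{p\mid d}(a_p-1)$ (multiplicativity of the $a_n$), $\sum_{l\mid d}\mu(d/l)\sigma(l)=d$, and $\sum_{d\mid M}\mu(M/d)d=\varphi(M)$.

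You instead run the weighted Hecke computation of part (2) with the trivial character. This gives the recursion $\msp_{\mathds{1},M}(\CS)=(a_\ell-2)\msp_{\mathds{1},M'}(\CS)+\varphi(M')\sum_{x}\CS(x/\ell)$, and you close by induction. You therefore need Theorem~\ref{thm3-3-1}(1) only for a prime modulus.

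What each route buys:
\begin{itemize}
\item Your version is more uniform: both parts come out of one computation.
\item It uses less input: no composite-level Hecke operators and no multiplicativity of Fourier coefficients.
\item It shows that the extra $-\varphi(M)$ in (1) is exactly the constant term $\bigl(\sum_y\psi(y)\bigr)\sum_x\CS(x/\ell)$ that vanishes in (2).
\item The paper's version produces the closed formula in one pass, without induction.
\end{itemize}

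Your base case $M'=1$, with $(\BZ/1\BZ)^\times=\{0\}$ and $\beta=0$, is formally fine. The first inductive step gives $\msp_{\mathds{1},\ell}(\CS)=(a_\ell-1-\ell)L(A,1)/\Omega_A$, as it should.
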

We remark that for $\psi$ a quadratic character, an analogue of (2) can be found in \cite[(2.5)]{zhai2} and the references cited there.

\begin{proof}
(1) Observe the identity
\[\sum_{k\in\BZ/M\BZ}\CS\left(\frac{k}{M}\right)=\sum_{d\mid M}\msp_{\mathds{1},d}(\CS).\]
Viewing the left hand side as a function of $M$, and applying the Möbius inversion formula, we obtain
\begin{equation}\label{3-3-f1}
\msp_{\mathds{1},M}(\CS)=\sum_{d\mid M}\mu(M/d)\sum_{k\in\BZ/d\BZ}\CS\left(\frac{k}{d}\right).
\end{equation}
Note that $(a_d(\phi)-\sigma(d))\frac{L(A,1)}{\Omega_A}$ is real. By (1) of Theorem \ref{thm3-3-1}, we have
\[\sum_{l\mid d}\sum_{k\in \BZ/l\BZ}\CS\left(\frac{k}{l}\right)=(a_d(\phi)-\sigma(d))\frac{L(A,1)}{\Omega_A}.\]
Therefore, by applying the Möbius inversion formula, we have
\[\sum_{k\in\BZ/d\BZ}\CS\left(\frac{k}{d}\right)=\sum_{l\mid d}\mu(d/l)\left(a_l(\phi)-\sigma(l)\right)\frac{L(A,1)}{\Omega_A}.\]
Since $M$ is squarefree, $d$ is also squarefree.
Using the identities
\[\sum_{l\mid d}\mu(d/l)a_l(\phi)=\prod_{p\mid d}(a_p(\phi)-1),\]
where, in the product, $p$ runs over all prime divisors of $d$, and
\[\sum_{l\mid d}\mu(d/l)\sigma(l)=d,\]
the equation \eqref{3-3-f1} becomes 
\[\msp_{\mathds{1},M}(\CS)=\sum_{d\mid M}\mu(M/d)\cdot\left(\prod_{p\mid d}(a_p(\phi)-1)-d\right)\frac{L(A,1)}{\Omega_A}.\]
Noting that $\sum_{d\mid M}\mu(M/d)d=\varphi(M)$, and
\[\sum_{d\mid M}\mu(M/d)\prod_{p\mid d}(a_p(\phi)-1)=\prod_{p\mid M}(a_p(\phi)-2),\]
we can conclude that part (1) follows.

(2) Denote $M'=M/\ell$. Since $(\ell,M')=1$, any element in $(\BZ/M\BZ)^\times$ can be written uniquely as
$xM'+y\ell$, where $x\in(\BZ/\ell\BZ)^\times$ and $y\in(\BZ/M'\BZ)^\times$. 
 A direct computation shows that
\begin{equation}\label{3-3-f2}
\msp_{\psi,M}(\CS)=\sum_{y\in(\BZ/M'\BZ)^\times}\sum_{x\in\BZ/\ell\BZ}\psi(y)\CS\left(\frac{y+M'x}{M}\right)-
\sum_{y\in(\BZ/M'\BZ)^\times}\psi(\ell y)\CS\left(\frac{\ell y}{M}\right).\end{equation}
Taking $\beta=\frac{y}{M'}$ in \eqref{3-3-f} and noting that $\psi$ is a nontrivial character, from \eqref{3-3-f2}, we obtain
\[\begin{aligned}\msp_{\psi,M}(\CS)=&a_\ell(\phi)\sum_{\ell\in(\BZ/M'\BZ)^\times}\psi(y)\CS\left(\frac{y}{M'}\right)-
\sum_{y\in(\BZ/M'\BZ)^\times}\psi(y)\CS\left(\frac{y\ell}{M'}\right)\\
-&\sum_{\ell\in(\BZ/M'\BZ)^\times}\psi(\ell y)\CS\left(\frac{y}{M'}\right).
\end{aligned}\]  
Since $\ell$ is prime to $M'$, the second sum is equal to $\overline{\psi}(\ell)\msp_{\psi,M'}(\CS)$. Thus, part (2) follows.
\end{proof}

\begin{lem}\label{lem3-3-3}
Let $\ell$ be a prime and $M=\ell^n$. Assume that $(M,N)=1$. Then, we have
\[\msp_{\mathds{1},M}(\CS)=\left((a_{M}-2a_{M/\ell}+a_{M/\ell^2})-\varphi(M)\right)\cdot\frac{L(A,1)}{\Omega_A}.\]
Here, we have used the convention $a_l=0$ if $l\notin\BZ$.
\end{lem}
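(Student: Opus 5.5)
We follow the proof of Lemma \ref{lem3-3-2}(1) closely, with the squarefree Möbius manipulations replaced by their prime-power versions. Start from the decomposition of $\BZ/M\BZ$ according to the gcd with $M$: every $k\in\BZ/\ell^n\BZ$ can be written uniquely as $k=\ell^{n-j}x$ with $x\in(\BZ/\ell^j\BZ)^\times$ and $0\le j\le n$, and then $\frac{k}{\ell^n}=\frac{x}{\ell^j}$. This gives
\[\sum_{k\in\BZ/\ell^n\BZ}\CS\left(\frac{k}{\ell^n}\right)=\sum_{j=0}^n\msp_{\mathds{1},\ell^j}(\CS).\]
Writing $F(\ell^n)$ for the left-hand side, we obtain
\[\msp_{\mathds{1},\ell^n}(\CS)=F(\ell^n)-F(\ell^{n-1})\]
for $n\ge1$, with the convention $F(\ell^{-1})=0$. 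This is the prime-power Möbius inversion.

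Next we compute $F(\ell^j)$ from Theorem \ref{thm3-3-1}(1). We take the real part and divide by $\Omega_A$; this is legitimate since $L(A,1)$ and $a_M(\phi)-\sigma(M)$ are real. Applied with $M=\ell^j$, the theorem gives
\[\sum_{i=0}^j F(\ell^i)=(a_{\ell^j}-\sigma(\ell^j))\frac{L(A,1)}{\Omega_A}.\]
Inverting once more, we get
\[F(\ell^j)=\bigl(a_{\ell^j}-a_{\ell^{j-1}}-\ell^j\bigr)\frac{L(A,1)}{\Omega_A},\]
using $\sigma(\ell^j)-\sigma(\ell^{j-1})=\ell^j$ and the stated convention $a_{\ell^{-1}}=0$ (the $j=0$ case reads $F(1)=\CS(0)=0=(1-0-1)\cdot$, consistent with Proposition \ref{prop3-1}(1)).

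Substituting into the difference $F(\ell^n)-F(\ell^{n-1})$ gives
\[\msp_{\mathds{1},M}(\CS)=\bigl(a_M-2a_{M/\ell}+a_{M/\ell^2}-(\ell^n-\ell^{n-1})\bigr)\frac{L(A,1)}{\Omega_A},\]
and $\ell^n-\ell^{n-1}=\varphi(M)$, which is the claim.

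The only delicate point is the boundary case. For $n=1$ the convention $a_{1/\ell}=0$ must be used consistently, and the formula then gives $(a_\ell-2-(\ell-1))$, in agreement with Lemma \ref{lem3-3-2}(1). We also need to check that the $j=0$ term, corresponding to the cusp $0$, contributes $\CS(0)=0$ on both sides, so that no stray constant appears. Beyond this bookkeeping, no real obstacle is expected.
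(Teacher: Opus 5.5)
Your proposal is correct and is essentially the paper's proof. Both arguments apply Möbius inversion twice, exactly as in Lemma~\ref{lem3-3-2}(1): once to the gcd decomposition of $\BZ/M\BZ$, and once to Theorem~\ref{thm3-3-1}(1). Your telescoping differences $F(\ell^j)-F(\ell^{j-1})$ are the prime-power inversions that the paper writes as $\sum_{d\mid M}\mu(M/d)a_d=a_M-a_{M/\ell}$ and $\sum_{d\mid M}\mu(M/d)a_{d/\ell}=a_{M/\ell}-a_{M/\ell^2}$. Your boundary bookkeeping, $F(1)=\CS(0)=0$ and $a_{1/\ell}=0$, is handled correctly.
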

\begin{proof}
Consider the following identity:
\[\sum_{d\mid M}\mu(M/d)a_d=a_M-a_{M/\ell},\]
where the sum is taken over all positive divisors $d$ of $M$. This follows from
the identity
\[a_{M}=\sum_{\ell^i\mid M}(a_{\ell^i}-a_{\ell^{i-1}})\]
and Möbius inversion formula. Next, 
by applying the same method, we obtain 
$$\sum_{d\mid M}\mu(M/d)a_{d/\ell}=a_{M/\ell}-a_{M/\ell^2}.$$
Thus, the lemma follows from a similar argument as in (1) of Lemma \ref{lem3-3-2}.
\end{proof}

\medskip

\subsection{$L$-Values for Quadratic Twists of $E$}
In the last subsection, we take $A$ to be the elliptic curve
\[E: y^2+y=x^3+2.\]
Note that the conductor $N=243$, \(a_2=0\) and $\Delta_E<0$, our assumption is automatically satisfied. We simply write $a_n$ for $a_n(\phi)$ in the follows.

\begin{lem}\label{s(k/8)}
\noindent
\begin{enumerate}
\item[(1)]$\CS\left(\frac{1}{2}\right)=-1$ and $\CS\left(\frac{1}{4}\right)=\CS\left(\frac{3}{4}\right)=-\frac{1}{2}$.
\item[(2)]$\CS\left(\frac{x}{8}\right)=0$ for all odd $x$.
\end{enumerate}
\end{lem}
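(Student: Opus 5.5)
The plan is to use the Hecke relation \eqref{3-3-f} at $\ell=2$, where $a_2=0$, together with the symmetries of Proposition \ref{prop3-1}(1) and the special value formulas of Lemmas \ref{lem3-3-2}, \ref{lem3-3-3}. Since the argument only produces linear relations among the values, one input has to be computed independently. I will take $\CS(1/2)$ (and, if needed, one value at denominator $4$) from the known value $L(E,1)/\Omega_E=1/3$.

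\textbf{The value at $1/2$.} Apply Lemma \ref{lem3-3-3} with $\ell=2$ and $M=2$. Since $(\BZ/2\BZ)^\times=\{1\}$, we get
\[\CS\!\left(\tfrac12\right)=\msp_{\mathds 1,2}(\CS)=\bigl((a_2-2a_1)-1\bigr)\tfrac13=(0-2-1)\cdot\tfrac13=-1.\]

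\textbf{The values at denominator $4$.} By the symmetry $\CS(\beta)=\CS(-\beta)$ and periodicity, $\CS(3/4)=\CS(-1/4)=\CS(1/4)$. Apply Lemma \ref{lem3-3-3} with $M=4$:
\[\CS\!\left(\tfrac14\right)+\CS\!\left(\tfrac34\right)=\bigl(a_4-2a_2+a_1-2\bigr)\tfrac13.\]
Since $a_2=0$, we have $a_4=a_2^2-2=-2$, so the right side is $(-2-0+1-2)/3=-1$. Hence each of $\CS(1/4)$ and $\CS(3/4)$ equals $-1/2$.

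As a cross-check, use \eqref{3-3-f} with $\ell=2$ and $\beta=1/2$:
\[\CS\!\left(\tfrac14\right)+\CS\!\left(\tfrac34\right)=0-\CS(1)+\CS(0)+\CS\!\left(\tfrac12\right)=-1.\]
This agrees with the previous computation.

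\textbf{The values at denominator $8$.} Symmetry and periodicity give $\CS(7/8)=\CS(1/8)$ and $\CS(5/8)=\CS(3/8)$. Apply \eqref{3-3-f} with $\ell=2$ and $\beta=1/4$:
\[\CS\!\left(\tfrac18\right)+\CS\!\left(\tfrac58\right)=0-\CS\!\left(\tfrac12\right)+\CS(0)+\CS\!\left(\tfrac12\right)=0.\]
So $\CS(1/8)=-\CS(3/8)$. Lemma \ref{lem3-3-3} with $M=8$ gives
\[\sum_{x\ \mathrm{odd}}\CS\!\left(\tfrac x8\right)=(a_8-2a_4+a_2-4)\tfrac13=(0+4+0-4)\tfrac13=0,\]
using $a_8=a_2a_4-2a_2=0$. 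This is consistent with the previous relation but does not separate the two remaining unknowns.

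\textbf{Main obstacle.} The real difficulty is showing that $\CS(1/8)$ itself is $0$. The Hecke relations at $\ell=2$ only ever produce sums such as $\CS(1/8)+\CS(3/8)$, never an individual value. I would try two routes, in this order.

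\emph{First route: a character twist.} Let $\eta$ be the even primitive character mod $8$, namely $\chi_8$ attached to $\BQ(\sqrt2)$. Theorem \ref{thm3-3-1}(2) gives
\[g(\eta)\frac{L(E,\eta,1)}{\Omega_E}=2\CS\!\left(\tfrac18\right)-2\CS\!\left(\tfrac38\right)=4\CS\!\left(\tfrac18\right).\]
So it suffices to show $L(E^{(2)},1)=0$. I would attempt this via the root number: compute the local sign at $2$ for the twist of the CM curve $E$, which has supersingular reduction at $2$, by $\chi_8$, and at $3$ use that $\chi_8(-3)$ is known. If the sign is $-1$, vanishing follows.

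\emph{Second route: explicit modular symbols.} Failing that, express $\{0,1/8\}$ in terms of Manin symbols via the continued fraction $1/8=[0;8]$, which gives $\{0,1/8\}=\{0,\infty\}+\{\infty,1/8\}$. Then use the Atkin–Lehner or CM symmetry (the curve is a twist of $X_0(27)$) to evaluate the real part of $\{\infty,1/8\}$.

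The root-number computation at $2$ is where I expect the most care to be needed.
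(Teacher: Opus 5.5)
Your proposal follows the paper's proof: you get $\CS(1/2)$ and $\CS(1/4)=\CS(3/4)$ from Lemma~\ref{lem3-3-3} together with $L(E,1)/\Omega_E=1/3$, and $\CS(1/8)=-\CS(3/8)$ from the $\ell=2$ relation (the paper uses Lemma~\ref{lem3-3-3} with $a_8=0$, which you also check); your ``first route'', Theorem~\ref{thm3-3-1}(2) with $\eta=\chi_8$ plus the vanishing of $L(E^{(2)},1)$, is exactly what the paper does. The step you flag as the main obstacle is routine, so the second route is unnecessary: since $\gcd(8,243)=1$, the standard formula $w(E\otimes\chi)=\chi(-N)\,w(E)$ gives $w(E^{(2)})=\chi_8(-243)\,w(E)=\chi_8(5)\cdot(+1)=-1$, where $w(E)=+1$ because $L(E,1)\neq 0$, and no local analysis at the supersingular prime $2$ is needed.
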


\begin{proof}
The first assertion of part (1) follows directly from Lemmas \ref{lem3-3-2} and \ref{lem3-3-3}. We now prove the second assertion.  
Since $\CS\left(\frac{1}{4}\right)=\CS\left(\frac{3}{4}\right)$, by applying Lemma \ref{lem3-3-3}, we obtain
\[2\CS\left(\frac{1}{4}\right)=(a_4-2a_2-1)\cdot\frac{L(E,1)}{\Omega_E}.\]
Recall that $L(E,1)/\Omega_E=1/3$, and observe that $a_4=-2$ and $a_2=0$, we have
$\CS\left(\frac{1}{4}\right)=-1/2$. 

For part (2), note that 
\[\CS\left(\frac{1}{8}\right)=\CS\left(\frac{7}{8}\right)\quad\textrm{and}\quad \CS\left(\frac{3}{8}\right)=\CS\left(\frac{5}{8}\right).\]
Since $a_8=0$, and applying Lemma \ref{lem3-3-3}, we have $\CS\left(\frac{1}{8}\right)=-\CS\left(\frac{3}{8}\right)$. Next, we consider the elliptic curve
$E^{(2)}$, which is the quadratic twist of $E$ by the extension $\BQ(\sqrt{2})/\BQ$. Let $\eta_8$ be the Dirichlet character associated to $\BQ(\sqrt{2})/\BQ$. Since $E^{(2)}$ has root number $-1$, applying (2) in Theorem \ref{thm3-3-1} to $E$ and $\eta_8$, we obtain
\[\sum^8_{x=1}\eta_8(x)\CS\left(\frac{x}{8}\right)=0,\]
which is equivalent to $\CS\left(\frac{1}{8}\right)=\CS\left(\frac{3}{8}\right)$. Therefore, $\CS\left(\frac{x}{8}\right)=0$ for all odd $x$.
\end{proof}

Take $p,q\in \fS$. As always, we denote by $m=pq$ and $\chi$ the Dirichlet character associated with the extension $\BQ(\sqrt{m})/\BQ$. 

\begin{prop}\label{l-val-em}
We have
\begin{enumerate}
\item[(1)]$\frac{L(E^{(m)},1)}{\Omega_{E^{(m)}}}$ is an odd integer.
\item[(2)]$L(E^{(2m)},1)=0$.
\end{enumerate}
\end{prop}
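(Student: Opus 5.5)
For part (1), I will compute the algebraic $L$-value through the special value formula in Theorem \ref{thm3-3-1}(2), applied to $A=E$ and the primitive even quadratic character $\chi$ of conductor $m=pq$; this character is even because $m\equiv 1\mod 4$ and $m>0$. Since $\chi$ is real, $g(\chi)=\sqrt{m}$, which gives
\[\sqrt{m}\,\frac{L(E,\chi,1)}{\Omega_E}=\msp_\chi(\CS)=\sum_{x\in(\BZ/m\BZ)^\times}\chi(x)\CS\left(\frac{x}{m}\right).\]
Next I will relate periods. For $d=m>0$ one has $L(E^{(m)},1)=L(E,\chi,1)$, and the standard period relation gives $\Omega_{E^{(m)}}=u\cdot\Omega_E/\sqrt{m}$ with $u$ a $2$-adic unit (a power of $3$ or $1$, because the twist is by an odd $m$ and $E$ has conductor a power of $3$). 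This reduces (1) to showing that $\msp_\chi(\CS)$ is a $2$-adic unit, and then separately that it is an integer up to odd factors. I expect integrality to follow from the usual Gross/Manin-type bound for $L(E,\chi,1)\sqrt{m}/\Omega_E$. The key claim is that its $2$-adic valuation is $0$.

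For the parity I will work modulo $2$ with $\CS\in\frac12\BZ$. Because $\chi(x)\equiv 1$ modulo $2$ in the sense $\chi(x)=1-2\cdot[\chi(x)=-1]$, I get
\[\msp_\chi(\CS)\equiv \msp_{\mathds 1,m}(\CS)\mod 2\BZ+\textstyle\sum 2\CS(\cdot).\]
The error term is $2\sum_{\chi(x)=-1}\CS(x/m)$, which lies in $\BZ$ but need not be even. To control it, I plan to pair $x$ with $-x$ using $\CS(x/m)=\CS(-x/m)$ and $\chi(-1)=1$. This makes $\sum_{\chi(x)=-1}\CS(x/m)$ a sum over pairs, $2\sum'$, and so the error is $4\sum'$ with each $\CS\in\frac12\BZ$, hence in $2\BZ$. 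Therefore $\msp_\chi(\CS)\equiv\msp_{\mathds 1,m}(\CS)\mod 2$. By Lemma \ref{lem3-3-2}(1),
\[\msp_{\mathds 1,m}(\CS)=\big((a_p-2)(a_q-2)-(p-1)(q-1)\big)\cdot\tfrac13.\]
Here $a_p,a_q$ are odd and $(p-1)(q-1)$ is even, so this value is an odd integer divided by $3$, which is a $2$-adic unit. Hence $\msp_\chi(\CS)$ is odd, as required. The main obstacle is making the period comparison $\Omega_{E^{(m)}}\sqrt m/\Omega_E$ precise up to $2$-adic units, including the real-component count and the lattice shape (\ref{3-2-f}), since that could introduce a factor of $2$. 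I would settle it by using the fact that $E^{(m)}$ also has negative discriminant, so both real loci are connected, together with the explicit scaling of the Néron differential under an odd twist.

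For part (2), I will use a root number argument. The twist by $2m$ corresponds to the character $\chi\eta_8$. As in the proof of Lemma \ref{s(k/8)}, I would compute the root number: $w(E^{(2m)})=w(E^{(2)})\chi_{\ldots}$, where $w(E^{(2)})=-1$ was used there. Twisting further by $\chi$ of conductor $m$ prime to $2\cdot 243$ multiplies the root number by $\chi(-N_{E^{(2)}})$. Here $N_{E^{(2)}}=243\cdot 2^{a}$, and the factor reduces to $\chi(-1)\chi(2)^{a}\chi(3)^5$. Since $p,q\equiv 7\mod 12$, I get $\chi(-1)=1$, $\chi(3)=\left(\frac{3}{p}\right)\left(\frac{3}{q}\right)$ and $\chi(2)=\left(\frac2p\right)\left(\frac2q\right)$. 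For the quadratic symbol I will use reciprocity: with $m\equiv1\mod 4$, $\chi(3)=\left(\frac m3\right)=\left(\frac{49}{3}\right)=1$. For $2$, the exponent $a$ is even (indeed $a=8$ for twists by $\BQ(\sqrt2)$ of a curve good at $2$), so $\chi(2)^a=1$. Hence the root number is $-1$ and $L(E^{(2m)},1)=0$. Alternatively, I could show $\sum_x\chi\eta_8(x)\CS(x/8m)=0$ directly, but the root number route is cleaner.
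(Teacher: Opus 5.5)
Your proposal is correct and is essentially the paper's proof. For part (1) you use Theorem~\ref{thm3-3-1}(2) with $g(\chi)=\sqrt m$, pair $x\leftrightarrow -x$ to reduce modulo $2$ to $\msp_{\mathds{1},m}(\CS)$, and apply Lemma~\ref{lem3-3-2}(1); for part (2) you use that $E^{(2m)}$ has root number $-1$. There are two harmless slips. First, your pairing already gives integrality directly, $\msp_\chi(\CS)=\sum_{x=1}^{(m-1)/2}\chi(x)\cdot 2\CS\left(\frac{x}{m}\right)\in\BZ$, so no external bound is needed. Second, the conductor exponent of $E^{(2)}$ at $2$ is $6$, not $8$; only its parity matters, and you can avoid it altogether via $w(E^{(2m)})=w(E)\,(\chi\eta_8)(-243)=-1$.
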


\begin{proof}
Part (2) follows from the fact that $E^{(2m)}$ has root number $-1$. We now prove part (1). Since $\chi$ is even and $\CS\left(\frac{x}{m}\right)=\CS\left(\frac{m-x}{m}\right)$, applying (2) in Theorem \ref{thm3-3-1}, we obtain
\begin{equation}\label{3-4-f1}
\frac{L(E^{(m)},1)}{\Omega_{E^{(m)}}}=\sum^{\frac{m-1}{2}}_{x=1}\chi(k)\cdot 2\CS\left(\frac{x}{m}\right).
\end{equation}
Note that the Manin constant $\nu_E=1$ and $2\CS\left(\frac{x}{m}\right)\in\BZ$, we know that $L(E^{(m)},1)/\Omega_{E^{(m)}}$ is an integer. Since $\chi(x)=\pm1$ if \((x,m)=1\), we take both sides of \eqref{3-4-f1} modulo $2$, yielding
\[\frac{L(E^{(m)},1)}{\Omega_{E^{(m)}}}\equiv \sum_{1\leq x\leq \frac{m-1}{2},\, (x,m)=1}2\CS\left(\frac{x}{m}\right)\mod 2.\]
By applying the relation $\CS\left(\frac{x}{m}\right)=\CS\left(\frac{m-x}{m}\right)$ again, we obtain
\[\frac{L(E^{(m)},1)}{\Omega_{E^{(m)}}}\equiv \msp_{\mathds{1},m}(\CS)\mod 2.\]
Finally, applying (1) from Lemma \ref{lem3-3-2}, we have
\[\frac{L(E^{(m)},1)}{\Omega_{E^{(m)}}}\equiv \frac{1}{3}\left((a_p-2)(a_q-2)-(p-1)(q-1)\right)\mod 2.\]
Since $a_p$ and $a_q$ are odd, we conclude that $\frac{L(E^{(m)},1)}{\Omega_{E^{(m)}}}$ is an odd integer.
\end{proof}

\section{$2$-Adic Properties of Mazur-Tate Elements under Specializations}\label{sec4}

\subsection{$\mu$ and $\lambda$ Invariants}
Let $\BQ_\infty/\BQ$ be the cyclotomic $\BZ_2$-extension of $\BQ$, and let $\BQ_n$ denote the unique intermediate field in $\BQ_\infty$ such that $[\BQ_n:\BQ]=2^n$. Define $G_n={\rm Gal}(\BQ_n/\BQ)$, and let $\Lambda_n=\BZ_2[G_n]$ denote the group algebra associated with $G_n$.  
In this subsection, we assume that $n$ is a positive integer. We will review Pollack's reinterpretation of Kurihara's method for determining the $2$-adic valuations of the specializations of modular elements on finite-order characters of ${\rm Gal}(\BQ_\infty/\BQ)$. A key concept in Pollack’s approach is the use of Iwasawa invariants for the group ring $\Lambda_n=\BZ_2[G_n]$. While Pollack's work primarily focused on odd primes  (cf. \cite{pollack}), we will show that the same method can be extended to the case of the prime $2$. For the convenience of the reader, we will carefully review the results and provide a detailed proof specifically for the case of the prime $2$.

\medskip

 Let $I_n$ denote the augmentation ideal of $\Lambda_n$. We also define $\widetilde{\Lambda}_n=\BF_2[G_n]$ and let $\widetilde{I}_n$ be the augmentation ideal of $\widetilde{\Lambda}_n$.
 
 \begin{defn}
 For a nonzero element $g\in\Lambda_n$, we define the following: 
 \begin{enumerate}
 \item[(1)]The $\mu$-invariant of $g$ is the unique integer $\mu(g)$ such that 
 $g\in 2^{\mu(g)}\Lambda_n$ but $g\notin 2^{\mu(g)+1}\Lambda_n$.
 \item[(2)]The $\lambda$-invariant of $g$ is the unique integer $\lambda(g)$ such that the reduction of $2^{-\mu(g)}g$ modulo $2$ lies in $\widetilde{I}^{\lambda(g)}_n$ but does not lie in $\widetilde{I}^{\lambda(g)+1}_n$. 
 \end{enumerate}
 \end{defn}
 
 The $\mu$ and $\lambda$ invariants are not additive in general. However, they satisfy the following weaker properties, whose proofs we omit.
 
\begin{lem}\label{lem4-1-1}
For nonzero $f,g\in\Lambda_n$, we have
\begin{enumerate}
\item[(1)]$\mu(fg)\geq\mu(f)+\mu(g)$
\item[(2)]If $\mu(fg)=0$, then $\lambda(fg)=\lambda(f)+\lambda(g)$.
\end{enumerate}
\end{lem}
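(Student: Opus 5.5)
The plan is to reduce everything modulo $2$ and work in $\widetilde{\Lambda}_n=\BF_2[G_n]$. Since $G_n\simeq\BZ/2^n\BZ$ is cyclic with generator $\gamma$, we have $\widetilde{\Lambda}_n\simeq\BF_2[T]/(T^{2^n})$ with $T=\gamma-1$, because $(1+T)^{2^n}-1=T^{2^n}$ in characteristic $2$. This ring is a local principal ideal ring. Its maximal ideal $\widetilde{I}_n$ is generated by $T$, and $\widetilde{I}_n^k=T^k\widetilde{\Lambda}_n$. So for a nonzero $\bar h\in\widetilde{\Lambda}_n$ the number $\lambda$ is the $T$-adic order: $\bar h=T^{\lambda}u$ with $u$ a unit and $\lambda<2^n$. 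The argument also uses that $\Lambda_n$ is a free $\BZ_2$-module on $G_n$, so $2$ is a non-zero-divisor and $2^a\Lambda_n\cap 2^b\Lambda_n$ behaves as expected.

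For part (1), write $f=2^{\mu(f)}f_0$ and $g=2^{\mu(g)}g_0$ with $f_0,g_0\in\Lambda_n$. This factorization is possible by the definition of $\mu$. Then $fg=2^{\mu(f)+\mu(g)}f_0g_0$ lies in $2^{\mu(f)+\mu(g)}\Lambda_n$. If $fg\neq0$, then $\mu(fg)$ is the largest $k$ with $fg\in2^k\Lambda_n$, which gives $\mu(fg)\ge\mu(f)+\mu(g)$. (Note that $fg$ could be zero, since $\Lambda_n$ has zero divisors. The statement implicitly assumes $fg\ne0$, and I would mention this.)

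For part (2), suppose $\mu(fg)=0$. By part (1), $\mu(f)=\mu(g)=0$, so $f$ and $g$ have nonzero reductions $\bar f=T^{\lambda(f)}u$ and $\bar g=T^{\lambda(g)}v$ with $u,v$ units. Then $\overline{fg}=T^{\lambda(f)+\lambda(g)}uv$. Since $\mu(fg)=0$, this is nonzero, so $\lambda(f)+\lambda(g)<2^n$. Because $uv$ is a unit, the $T$-adic order of $\overline{fg}$ is exactly $\lambda(f)+\lambda(g)$, i.e. $\overline{fg}\in\widetilde I_n^{\lambda(f)+\lambda(g)}\setminus\widetilde I_n^{\lambda(f)+\lambda(g)+1}$.

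The only step needing care is the structural claim that $\widetilde{\Lambda}_n$ is $\BF_2[T]/(T^{2^n})$ and that membership in $\widetilde I_n^k\setminus\widetilde I_n^{k+1}$ is equivalent to having $T$-adic order exactly $k$ with unit cofactor. This holds because any element with nonzero constant term in this truncated polynomial ring is invertible. Everything else is formal.
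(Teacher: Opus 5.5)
Your proof is correct. The paper omits the proof of this lemma, and your route is the standard one: identify $\BF_2[G_n]$ with $\BF_2[T]/(T^{2^n})$ for $T=\gamma-1$, so that $\lambda$ is the $T$-adic order with a unit cofactor. This is exactly the normal form $f=2^{\mu(f)}\bigl((\gamma-1)^{\lambda(f)}u+2f_0\bigr)$ that the paper itself uses in the proof of Lemma~\ref{lem4-1-2}. Your caveat is also a fair correction: $fg$ can vanish in $\Lambda_n$ (e.g.\ $(\gamma-1)\tau_{\BQ_n}=0$), so the statement tacitly assumes $fg\neq0$.
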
 
  
\begin{lem}\label{lem4-1-2}
Let $\psi_n$ be a character of $G_n$ of order $2^n$. If $f\in\Lambda_n$ satisfies $\lambda(f)<2^{n-1}$, then
\[v(\psi_n(f))=\mu(f)+\frac{\lambda(f)}{2^{n-1}},\]
where $v$ denotes the normalized additive $2$-adic valuation on $\BQ_2(\zeta_{2^n})$ such that $v(2)=1$.
\end{lem}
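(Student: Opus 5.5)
Write $G_n=\langle\gamma\rangle$ and put $T=\gamma-1$. Then $\Lambda_n=\BZ_2[T]/((1+T)^{2^n}-1)$, and modulo $2$ we have $\widetilde{\Lambda}_n=\BF_2[T]/(T^{2^n})$, since $(1+T)^{2^n}-1\equiv T^{2^n}\mod 2$. In this ring the augmentation ideal is $\widetilde I_n=(T)$, so $\widetilde I_n^k=(T^k)$. Hence, after writing $f=2^{\mu(f)}g$ with $g\not\equiv0\mod 2$, the condition $\lambda(g)=\lambda(f)=\lambda$ says exactly that
\[g=\sum_{i=0}^{2^n-1}c_iT^i,\qquad c_i\in\BZ_2,\]
with $c_i\equiv 0\mod 2$ for $i<\lambda$ and $c_\lambda\in\BZ_2^\times$. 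Here I use the canonical $\BZ_2$-basis $1,T,\dots,T^{2^n-1}$ of $\Lambda_n$; its reduction is the basis of $\widetilde\Lambda_n$, which is what makes this description well defined.

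Since $\psi_n$ is a ring homomorphism, $\psi_n(f)=2^{\mu(f)}\psi_n(g)$. So it suffices to show that $v(\psi_n(g))=\lambda/2^{n-1}$ whenever $\lambda<2^{n-1}$. Now $\psi_n(T)=\zeta-1$, where $\zeta$ is a primitive $2^n$-th root of unity. The element $\pi=\zeta-1$ is a uniformizer of $\BQ_2(\zeta_{2^n})$, which is totally ramified of degree $2^{n-1}$ over $\BQ_2$. Therefore $v(\pi)=1/2^{n-1}$, and
\[\psi_n(g)=\sum_i c_i\pi^i.\]

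Next I compare the valuations of the individual terms.

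For $i=\lambda$, the term has valuation exactly $\lambda/2^{n-1}$, because $c_\lambda$ is a unit.

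For $i<\lambda$, we have $v(c_i)\ge1$, so $v(c_i\pi^i)\ge 1+i/2^{n-1}>\lambda/2^{n-1}$. This uses the hypothesis $\lambda<2^{n-1}$, i.e. $\lambda/2^{n-1}<1$.

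For $i>\lambda$, trivially $v(c_i\pi^i)\ge i/2^{n-1}>\lambda/2^{n-1}$.

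So the term $i=\lambda$ has strictly the smallest valuation, and by the ultrametric inequality $v(\psi_n(g))=\lambda/2^{n-1}$. Together with $\psi_n(f)=2^{\mu(f)}\psi_n(g)$, this gives the formula in Lemma \ref{lem4-1-2}.

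The only delicate point is the bookkeeping: one must work with the basis $T^i$ rather than the group elements $\gamma^i$. The hypothesis $\lambda<2^{n-1}$ is exactly what ensures that the even coefficients below degree $\lambda$ cannot tie with the leading unit term.
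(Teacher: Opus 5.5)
Your proof is correct and follows essentially the same route as the paper, which writes $f=2^{\mu(f)}\bigl((\gamma-1)^{\lambda(f)}u+2f_0\bigr)$ with $u\in\Lambda_n^\times$ and compares $v\bigl((\zeta-1)^{\lambda(f)}\bigr)=\lambda(f)/2^{n-1}<1\le v(2\psi_n(f_0))$. Your termwise expansion in the basis $T^i$ is the same argument written out explicitly: it justifies the paper's decomposition, with the terms $i>\lambda$ playing the role of the paper's unit $u$ and handled directly by the ultrametric inequality.
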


\begin{proof}
Let $\gamma$ be a generator of $G_n$.
We can express $f\in\Lambda_n$ as 
\[f=2^{\mu(f)}\left((\gamma-1)^{\lambda(f)}u+2f_0\right),\]
where $u\in\Lambda^\times_n$ and $f_0\in\Lambda_n$. Since $\psi_n$ has order $2^n$, we know that $\psi_n(\gamma)$ is a primitive $2^n$-th root of unity. Thus
\[v(\psi_n(\gamma)-1)=\frac{1}{2^{n-1}}.\]
Observe that, when $\lambda(f)<2^{n-1}$, we have
\[\lambda(f)\cdot v(\psi_n(\gamma)-1)< v(2\psi_n(f_0)).\]
Therefore, the lemma follows.
\end{proof}

Define 
\[\fd_n=\sum_{x\in G_n, x^2=1}x\in\Lambda_n.\]
Note that the sum is taken over elements in the kernel of the projection $\pi_n:G_n\to G_{n-1}$. It is clear that $\mu(\fd_n)=0$.
As in the proof of Lemma \ref{lem4-1-2}, we let $\gamma$ be a generator of $G_n$. Then
\[\fd_n=1+\gamma^{2^{n-1}}\equiv (\gamma-1)^{2^{n-1}} \mod 2,\]
therefore, we obtain that $\lambda(\fd_n)=2^{n-1}$.

\begin{lem}\label{lem4-1-3}
For nonzero $f\in\Lambda_{n-1}$ and $g\in \Lambda_n$, we have
\begin{enumerate}
\item[(1)]$\pi_n\circ\nu_{n-1}(f)=2f$.
\item[(2)]$\nu_{n-1}\circ\pi_n(g)=\fd_n g$.
\end{enumerate}
\end{lem}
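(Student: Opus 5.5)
Both identities are statements about linear maps between group rings, so I will check them on the group-element basis and extend by $\BZ_2$-linearity. I work with the restriction of $\pi_n$ and $\nu_{n-1}$ to $\Lambda_n=\BZ_2[G_n]$ and $\Lambda_{n-1}=\BZ_2[G_{n-1}]$. The only input is the structure of the surjection $\pi_n:G_n\to G_{n-1}$. Since $G_n\simeq\BZ/2^n\BZ$ is cyclic, the kernel of $\pi_n$ is the unique subgroup of order $2$, namely $\{x\in G_n: x^2=1\}=\{1,\gamma^{2^{n-1}}\}$ for a generator $\gamma$. In particular each fibre of $\pi_n$ has exactly two elements, and the fibre over $\pi_n(\sigma')$ is the coset $\sigma'\ker\pi_n$.

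For (1), take $\sigma\in G_{n-1}$. By definition $\nu_{n-1}(\sigma)=\sum_{\pi_n(\sigma')=\sigma}\sigma'$, a sum of exactly two elements. Applying $\pi_n$ sends each summand back to $\sigma$, so $\pi_n\circ\nu_{n-1}(\sigma)=2\sigma$. Linearity then gives $\pi_n\circ\nu_{n-1}(f)=2f$ for every $f\in\Lambda_{n-1}$.

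For (2), take $g=\sigma'\in G_n$. Then
$$\nu_{n-1}(\pi_n(\sigma'))=\sum_{\pi_n(\tau)=\pi_n(\sigma')}\tau=\sum_{x\in\ker\pi_n}\sigma' x=\sigma'\fd_n,$$
using that the fibre is the coset $\sigma'\ker\pi_n$ and that $\fd_n$ is exactly the sum over $\ker\pi_n$. Since $G_n$ is abelian, $\sigma'\fd_n=\fd_n\sigma'$, and linearity gives $\nu_{n-1}\circ\pi_n(g)=\fd_n g$.

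There is no genuine obstacle here. The one point to state explicitly is that the kernel of $G_n\to G_{n-1}$ consists precisely of the elements with $x^2=1$. This holds because the kernel has order $2$ and a cyclic group of order $2^n$ has a unique element of order $2$. With that, the fibre description, and hence both identities, follow immediately.
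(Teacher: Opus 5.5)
Your proof is correct and is essentially the paper's argument: the paper also writes $\nu_{n-1}(x)=x_0\fd_n$ for any lift $x_0$ of $x$, uses that the fibres of $\pi_n$ are cosets of the two-element kernel $\{x\in G_n : x^2=1\}$, and checks the identities on group elements. The only difference is that you write out part (2), which the paper leaves as analogous.
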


\begin{proof}
For part (1), let $f=\sum_{x\in G_{n-1}}c_x x$. We observe that $\nu_{n-1}(x)=x_0\fd_n$, where $x_0$ is any lift of $x$ in $G_n$, and $\pi_n(x_0y)=x$
for any $y\in {\rm ker}(\pi_n)$. Therefore, we have
\[\pi_n\circ\nu_{n-1}(f)=\sum_{x\in G_{n-1}}c_x\pi_n(x_0\fd_n)=2f.\]
Part (2) follows in a similar manner, and we omit the proof for brevity.
\end{proof}

\begin{prop}\label{prop4-1-4}
For nonzero elements $g\in\Lambda_n$ and $f\in \Lambda_{n-1}$, we have
\begin{enumerate}
\item[(1)]$\mu(\pi_n(g))\geq\mu(g)$. In particular, $\mu(\pi_n(g))=0$ implies $\mu(g)=0$.
\item[(2)]If $\mu(\pi_n(g))=\mu(g)$, then $\lambda(\pi_n(g))=\lambda(g)$.
\item[(3)]$\mu(\nu_{n-1}(f))=\mu(f)$.
\item[(4)]$\lambda(\nu_{n-1}(f))=2^{n-1}+\lambda(f)$.
\end{enumerate}
\end{prop}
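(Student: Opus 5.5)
The plan is to reduce everything to the concrete description of $\Lambda_n$ and $\widetilde{\Lambda}_n$ via a generator $\gamma$ of $G_n$. Its image $\bar\gamma=\pi_n(\gamma)$ generates $G_{n-1}$. Modulo $2$ we have $\widetilde{\Lambda}_n\cong\BF_2[T]/(T^{2^n})$ with $T=\gamma-1$, and $\widetilde{I}_n^k=(T^k)$. The reduction of $\pi_n$ is the surjection $\BF_2[T]/(T^{2^n})\to\BF_2[T]/(T^{2^{n-1}})$. The map $\nu_{n-1}$ is multiplication by $\fd_n$ after choosing lifts; concretely, $\nu_{n-1}(h(\bar\gamma))=h(\gamma)\fd_n$ for any polynomial representative $h$, and this is well defined because $(\gamma^{2^{n-1}}-1)\fd_n=\gamma^{2^n}-1=0$. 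All four parts then become statements about polynomials in $T$.

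For (1), note that $\pi_n$ is a $\BZ_2$-linear ring map. Writing $g=2^{\mu(g)}g'$ gives $\pi_n(g)=2^{\mu(g)}\pi_n(g')$, so $\mu(\pi_n(g))\geq\mu(g)$; the ``in particular'' clause is immediate. For (2), if equality holds then $\pi_n(g')$ has $\mu=0$, so its reduction is nonzero. Write $\bar g'=T^{\lambda(g)}\bar u$ with $\bar u$ a unit of $\widetilde{\Lambda}_n$, i.e. $\bar u$ has nonzero constant term. Its image in $\BF_2[T]/(T^{2^{n-1}})$ is $T^{\lambda(g)}$ times a unit. Since this image is nonzero, we must have $\lambda(g)<2^{n-1}$, and then $\lambda(\pi_n(g))=\lambda(g)$.

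For (3) and (4), write $f=2^{\mu(f)}f'$ and $\nu_{n-1}(f)=2^{\mu(f)}\nu_{n-1}(f')$, so it suffices to treat $\mu(f')=0$. Lift $f'$ to $h(\gamma)\in\Lambda_n$ with $h$ a polynomial of degree $<2^{n-1}$. The reduction of $\bar h$ is $T^{\lambda(f)}\bar u$ with $\bar u$ a unit, and this description is valid modulo $T^{2^{n-1}}$. Modulo $2$, $\fd_n\equiv T^{2^{n-1}}$, so the reduction of $\nu_{n-1}(f')=h(\gamma)\fd_n$ is $T^{2^{n-1}}\bar h(T)$ in $\BF_2[T]/(T^{2^n})$. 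Multiplication by $T^{2^{n-1}}$ kills exactly the multiples of $T^{2^{n-1}}$. Hence $T^{2^{n-1}}\bar h$ depends only on $\bar h$ modulo $T^{2^{n-1}}$, and it equals $T^{2^{n-1}+\lambda(f)}\bar u$. This is nonzero because $\lambda(f)<2^{n-1}$, which holds since $\bar f'\neq0$ in $\BF_2[T]/(T^{2^{n-1}})$. Therefore $\mu(\nu_{n-1}(f'))=0$ and $\lambda(\nu_{n-1}(f'))=2^{n-1}+\lambda(f)$, which gives (3) and (4). One also needs the easy upper bound $\mu(\nu_{n-1}(f))\ge\mu(f)$, which is immediate from linearity.

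The only delicate point is bookkeeping the ambiguity of lifts: $\bar u$ is defined only modulo $T^{2^{n-1}-\lambda(f)}$. Multiplying by $T^{2^{n-1}+\lambda(f)}$ makes this ambiguity vanish in $\BF_2[T]/(T^{2^n})$, so the computation is independent of the choice of lift. I expect no real obstacle beyond stating this carefully.
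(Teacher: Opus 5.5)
Your proof is correct and takes essentially the paper's route for (4): the paper also writes $\nu_{n-1}(f)=\fd_n f_0$ for a lift $f_0$ and uses $\fd_n\equiv(\gamma-1)^{2^{n-1}}\mod 2$. It then invokes additivity of $\lambda$ (Lemma~\ref{lem4-1-1}(2)) and part (2) to get $\lambda(f_0)=\lambda(f)$, whereas you observe directly that multiplication by $T^{2^{n-1}}$ in $\BF_2[T]/(T^{2^n})$ kills the lift ambiguity. The other difference is that you prove (2) and (3) by hand in the truncated polynomial ring, with (3) coming out of the same computation as (4), instead of citing Pollack's Proposition 4.9, so your version is self-contained.
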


\begin{proof}
Part (1) follows directly from the definition of $\mu$-invariants. Parts (2) and (3) follow by the same way as in \cite[Proposition 4.9]{pollack}. We now prove part (4). By considering $2^{-\mu(f)}f$ instead of $f$, we may assume that $\mu(f)=0$. From part (3), we know that $\mu(\nu_{n-1}(f))=0$. Let $f_0$ be any element in $\Lambda_n$ such that $\pi_n(f_0)=f$. From part (2) of Lemma \ref{lem4-1-3}, we obtain
\[\nu_{n-1}(f)=\fd_n f_0.\]
By part (2) of Lemma \ref{lem4-1-1}, we have $\lambda(\nu_{n-1}(f))=\lambda(\fd_n)+\lambda(f_0)$. Since $\mu(\pi_n(f_0))=\mu(f)=0$ by part (1), it follows that $\mu(f_0)=0$. From part (2), we obtain $\lambda(f)=\lambda(f_0)$. Thus, part (4) follows from the fact that $\lambda(\fd_n)=2^{n-1}$.
\end{proof}

\medskip

\subsection{$2$-Adic Properties of Modular Elements Associated with $E$ Under Specializations}
Let $\psi_n$ be a character of $G_n$ of order $2^n$. In the following, let $\gamma=\sigma_5$ be a generator of $G_n$, and assume that $\psi_n(\gamma)=\zeta_{2^n}$, where $\zeta_{2^n}$ is a primitive $2^n$-th root of unity. Since we are primarily concerned with the $2$-adic valuations of modular elements at finite order characters of $G_n$, and the modular elements have coefficients in $\BQ$, once we determine the $2$-adic valuation at $\psi_n$, we can deduce the $2$-adic valuations for the other specializations by applying the Galois action on the modular elements at $\psi_n$.

Let $E$ be the elliptic curve defined in the introduction.  From now until the end of this section, for any rational number $\beta=\frac{k}{l}$ with $(l,k)=1$ and $(l,3)=1$, we associate  $\CS(\beta)$ with $E$.  The following two lemmas are based on numerical computation.

\begin{lem}\label{lem4-2-0}
For $\CS\left(\frac{x}{16}\right)$ with $x\in[1,8]$ being an odd integer, we have
(1) $\CS\left(\frac{1}{16}\right)=\CS\left(\frac{5}{16}\right)=0$; (2) $\CS\left(\frac{3}{16}\right)=\CS\left(\frac{7}{16}\right)=-\frac{1}{2}$.
\end{lem}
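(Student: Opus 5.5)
Since the statement is numerical, the plan is to first cut the number of unknowns down with the relations already available, and then settle what is left by one explicit computation. By part (1) of Proposition~\ref{prop3-1} we have $\CS\left(\frac{x}{16}\right)=\CS\left(\frac{16-x}{16}\right)$, so the unknowns are $s_1,s_3,s_5,s_7$, where $s_x=\CS\left(\frac{x}{16}\right)$. Each lies in $\frac12\BZ$.

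\emph{Step 1: distribution relation.} Apply \eqref{3-3-f} with $\ell=2$, $a_2=0$ and $\beta=\frac{x}{8}$. This gives
\[\CS\left(\tfrac{x}{16}\right)+\CS\left(\tfrac{x+8}{16}\right)=-\CS\left(\tfrac{x}{4}\right)+\CS\left(\tfrac12\right).\]
By Lemma~\ref{s(k/8)} the right-hand side equals $\frac12-1=-\frac12$. Taking $x=1$ and $x=3$, and using the symmetry, we get
\[s_1+s_7=-\tfrac12,\qquad s_3+s_5=-\tfrac12.\]

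\emph{Step 2: consistency check.} Lemma~\ref{lem3-3-3} with $M=16$ gives the sum over the odd residues. The Hecke recursion $a_{2^{k+1}}=a_2a_{2^k}-2a_{2^{k-1}}$ gives $a_4=-2$, $a_8=0$, $a_{16}=4$. Hence
\[2(s_1+s_3+s_5+s_7)=\tfrac13(4-0-2-8)=-2,\]
which agrees with Step~1 but adds no new information. So two degrees of freedom remain.

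\emph{Step 3: twisted $L$-values.} The even characters modulo $16$ form a cyclic group of order $4$, generated by a character $\eta$ with $\eta(3)=i$. The two characters of order $4$, namely $\eta$ and $\overline{\eta}$, are primitive of conductor $16$. Part (2) of Theorem~\ref{thm3-3-1} then gives
\[g(\eta)\frac{L(E,\overline{\eta},1)}{\Omega_E}=(s_1-s_7)+i(s_3-s_5)\cdot(\pm1),\]
up to the sign bookkeeping coming from $\eta(5)=-i$, $\eta(7)=-1$. Since the $s_x$ lie in $\frac12\BZ$, knowing this complex number to within an error smaller than $\frac14$ pins down $s_1-s_7$ and $s_3-s_5$ exactly. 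I would compute $L(E,\overline{\eta},1)$ numerically using the rapidly convergent series with root-number factor. The conductor of the twist is $243\cdot256$, which is small, so modest precision suffices. Alternatively, and more rigorously, I would compute $\{0,\frac1{16}\}$ and $\{0,\frac3{16}\}$ directly as Manin symbols. This uses the continued-fraction (Manin trick) expansion of $\frac{1}{16}$ and $\frac{3}{16}$ and a basis of $H_1(X_0(243),\BZ)^+$ projected to the $f$-isotypic part, as in \cite{cremona}. Either route yields $s_1=s_5=0$ and $s_3=s_7=-\frac12$.

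The main obstacle is Step~3. There is no further formal relation available: the operator $U_3$ at the bad prime does not act cleanly on these cusps. So one genuinely needs an explicit modular-symbol or $L$-value computation, and the care lies in pinning down the normalisation (the Gauss sum and the lattice \eqref{3-2-f}) so that the half-integrality argument is airtight.
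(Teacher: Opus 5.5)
Your proposal is correct and is essentially the paper's approach. The paper only asserts that the lemma is ``based on numerical computation,'' and your argument likewise rests on one explicit modular-symbol (or twisted $L$-value) computation, after a sound reduction via \eqref{3-3-f} at $\ell=2$ and Lemma~\ref{s(k/8)} to the two unknowns $s_1,s_3$, with $s_7=-\frac12-s_1$ and $s_5=-\frac12-s_3$. One harmless slip: each $s_x$ occurs twice in the twisted sum, which is $2(s_1-s_7)+2i(s_3-s_5)=(4s_1+1)+i(4s_3+1)$, equal to $1-i$ for the claimed values, so you need even less numerical precision than you stated.
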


\begin{lem}\label{lem4-2-1}
For $\CS\left(\frac{x}{32}\right)$ with $x\in[1,16]$ being an odd integer, we have
\begin{enumerate}
\item[(1)]$\CS\left(\frac{x}{32}\right)=\frac{-1}{2}$ for $x=3,7,9,13$.
\item[(2)]$\CS\left(\frac{x}{32}\right)=-1$ for $x=11,15$.
\item[(3)]$\CS\left(\frac{x}{32}\right)=0$ for $x=1,5$.
\end{enumerate}
\end{lem}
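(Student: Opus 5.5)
The approach is to use the structural relations among modular symbols already established to cut the eight unknowns $\CS(\frac{x}{32})$ ($x$ odd, $1\le x\le 15$) down to a small number, and then fix those few by a rigorous finite computation. Two facts make the computation rigorous: every $\CS(\beta)$ lies in $\frac12\BZ$, and $\CS(\beta)$ can be computed to any precision.

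\emph{Step 1: the Hecke relation at $2$.} Apply (2) of Proposition \ref{prop3-1} with $\ell=2$ and $a_2=0$, exactly as in the proof of Lemma \ref{lem3-2-1}. For odd $k$ this gives
\[\CS\left(\tfrac{k}{32}\right)+\CS\left(\tfrac{k+16}{32}\right)=-\CS\left(\tfrac{k}{8}\right)+\CS\left(\tfrac12\right).\]
By Lemma \ref{s(k/8)} the right-hand side equals $0+(-1)=-1$. By (1) of Proposition \ref{prop3-1}, $\CS(\frac{k+16}{32})=\CS(\frac{16-k}{32})$. Hence
\[\CS\left(\tfrac{x}{32}\right)+\CS\left(\tfrac{16-x}{32}\right)=-1\qquad (x=1,3,5,7).\]
This pairs $1\leftrightarrow15$, $3\leftrightarrow13$, $5\leftrightarrow11$, $7\leftrightarrow9$. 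It is consistent with the claimed values, and it leaves only the four unknowns $\CS(\frac{x}{32})$ for $x=1,3,5,7$.

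\emph{Step 2: further linear constraints from $L$-values.} Lemma \ref{lem3-3-3} with $M=32$, using $a_{32}=a_{16}=a_8=0$, gives the total
\[\sum_{x \text{ odd}}\CS\left(\tfrac{x}{32}\right)=-\varphi(32)\cdot\tfrac13\cdot\ldots,\]
but this is already implied by Step 1. More useful are the even primitive characters $\eta$ of conductor $32$ together with (2) of Theorem \ref{thm3-3-1}. These give $\msp_\eta(\CS)=g(\eta)L(E,\overline\eta,1)/\Omega_E$, which ties the four remaining unknowns to the twisted $L$-values $L(E,\overline\eta,1)$. These values are not known in closed form, so they do not finish the proof on their own.

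\emph{Step 3: direct computation, the expected main obstacle.} I would compute each of the four symbols $\{0,\frac{x}{32}\}$ for $x=1,3,5,7$ by Manin's continued fraction algorithm. This writes the symbol as a sum of Manin symbols $\{g\cdot0,g\cdot\infty\}$ with $g\in\Gamma_0(1)$ running over cosets of $\Gamma_0(243)$. Each period $2\pi i\int \phi(z)\,dz$ is then evaluated numerically from the rapidly convergent $q$-expansion of $f$ (computing $a_n$ from the CM structure of $E$). Since $\CS(\frac{x}{32})\in\frac12\BZ$ and $\Omega_E$ is known, an error bound below $\frac14$ for $\Re(\cdot)/\Omega_E$ determines the value exactly. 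Equivalently, one can compute the relevant twisted $L$-values $L(E,\eta,1)$ for characters of conductor $16$ and $32$ numerically with explicit error terms and solve the linear system from Step 2.

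Either route yields $\CS(\frac{1}{32})=\CS(\frac{5}{32})=0$ and $\CS(\frac{3}{32})=\CS(\frac{7}{32})=-\frac12$. Step 1 then gives the remaining values $\CS(\frac{x}{32})=-\frac12$ for $x=9,13$ and $-1$ for $x=11,15$. The one genuinely non-formal ingredient is the certified numerical evaluation in Step 3, and the care there goes into the error bound on the truncated $q$-series.
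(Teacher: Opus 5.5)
Your proposal is correct and is essentially the paper's route: the paper gives no argument beyond saying these values come from numerical computation, and your preliminary use of the $\ell=2$ Hecke relation with Lemma \ref{s(k/8)} to get $\CS(\frac{x}{32})+\CS(\frac{16-x}{32})=-1$ is valid, agrees with the stated values, and halves what has to be computed. One harmless slip in your discarded Step 2: since $a_2=0$ gives $a_{2^{k+1}}=-2a_{2^{k-1}}$, one has $a_{16}=4$ (not $0$), so Lemma \ref{lem3-3-3} yields $\sum_{x\ \mathrm{odd}}\CS(\frac{x}{32})=-8$, which agrees with Step 1.
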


\medskip

Combining this with Lemma \ref{s(k/8)}, we can explicitly determine all the values $\CS\left(\frac{x}{32}\right)$ for integers $x\in[1,32]$.
From part (2) of Lemma \ref{lem3-2-2}, we know that $\xi_{\BQ_n}\in\BZ_2[G_n]$. Then
\[\psi_n(\xi_{\BQ_n})=\sum_{k\in(\BZ/2^{n+2}\BZ)^\times}\psi_n(k)\cdot\CS\left(\frac{k}{2^{n+2}}\right)=\msp_{\psi_n}(\CS)\in\BZ_2[\zeta_{2^n}]. \]
We define a sequence $\{q_n\}^\infty_{n=1}$ of integers as follows:
\begin{itemize}
\item $q_1=1$;
\item if $n$ is an odd integer and $n\geq3$, $q_n=\frac{2^n+7}{3}$;
\item if $n$ is a positive even integer, $q_n=\frac{2^n-1}{3}$.
\end{itemize}

\begin{prop}\label{prop4-2-2}
We have $\psi_1(\xi_{\BQ_1})=0$, and 
$\psi_n(\xi_{\BQ_n})=\frac{q_n}{2^{n-1}}$
for all integers $n\geq2$.
\end{prop}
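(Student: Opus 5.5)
The plan is to compute the Iwasawa invariants $\mu(\xi_{\BQ_n})$ and $\lambda(\xi_{\BQ_n})$ in $\Lambda_n$, and then read off the valuation from Lemma \ref{lem4-1-2}. I read the assertion $\psi_n(\xi_{\BQ_n})=q_n/2^{n-1}$ as a statement about the $2$-adic valuation $v(\psi_n(\xi_{\BQ_n}))$. Concretely, I aim to show
\[
\mu(\xi_{\BQ_n})=0,\qquad \lambda(\xi_{\BQ_n})=q_n \qquad (n\geq 2).
\]
Since $q_n<2^{n-1}$ for every $n\geq 2$ (for instance $q_2=1$, $q_3=5$, $q_4=5$, and in general $q_n\approx 2^n/3$), Lemma \ref{lem4-1-2} then gives $v(\psi_n(\xi_{\BQ_n}))=q_n/2^{n-1}$.

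The low levels come first and use only the explicit values. For $n=1$, $\xi_{\BQ_1}$ is the image of $\xi_8$, and all $\CS(x/8)$ vanish by Lemma \ref{s(k/8)}(2). Hence $\xi_{\BQ_1}=0$, and in particular $\psi_1(\xi_{\BQ_1})=0$.

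For $n=2$ and $n=3$, I will write $\xi_{\BQ_2}$ and $\xi_{\BQ_3}$ explicitly in terms of the generator $\gamma=\sigma_5$. This means grouping $k\in(\BZ/2^{n+2}\BZ)^\times$ into cosets of $\{\pm1\}$ and using $\CS(x)=\CS(-x)$. The inputs are the values in Lemma \ref{lem4-2-0} and Lemma \ref{lem4-2-1}, together with Lemma \ref{s(k/8)}. After that I expand $\xi_{\BQ_n}$ modulo $2$ in powers of $(\gamma-1)$ in $\BF_2[G_n]$, and expect to find $\mu=0$ with $\lambda=1$ for $n=2$ and $\lambda=5$ for $n=3$. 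Since $\lambda<2^{n-1}$ in both cases, Lemma \ref{lem4-1-2} applies. This base case is needed separately for $n=3$, because the recursion below would feed in $\xi_{\BQ_1}=0$ and give no information there.

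For $n\geq 4$ I will argue by induction using Lemma \ref{lem3-2-2}(1), where $\nu_{n-2}$ denotes the induced map $\Lambda_{n-2}\to\Lambda_{n-1}$:
\[
\pi_n(\xi_{\BQ_n})=-\nu_{n-2}(\xi_{\BQ_{n-2}})+2\CS\left(\tfrac12\right)\tau_{\BQ_{n-1}}=-\nu_{n-2}(\xi_{\BQ_{n-2}})-2\tau_{\BQ_{n-1}}.
\]
By induction $\mu(\xi_{\BQ_{n-2}})=0$. Proposition \ref{prop4-1-4}(3),(4) then gives $\mu(\nu_{n-2}(\xi_{\BQ_{n-2}}))=0$ and $\lambda(\nu_{n-2}(\xi_{\BQ_{n-2}}))=2^{n-2}+q_{n-2}$. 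The term $2\tau_{\BQ_{n-1}}$ vanishes modulo $2$, so $\pi_n(\xi_{\BQ_n})$ has $\mu=0$ and the same $\lambda$-invariant $2^{n-2}+q_{n-2}$.

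Next I apply Proposition \ref{prop4-1-4}(1), which yields $\mu(\xi_{\BQ_n})=0$. Then Proposition \ref{prop4-1-4}(2) applies and gives $\lambda(\xi_{\BQ_n})=2^{n-2}+q_{n-2}$. A short check shows that $q_n=2^{n-2}+q_{n-2}$ for all $n\geq 4$, in both parity cases; for example $q_4=4+1$, $q_5=8+5$, $q_6=16+5$. This closes the induction.

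The main obstacle is the base computation at $n=3$. There one has to identify the coset representatives of $G_3$ as powers of $\sigma_5$ acting on $(\BZ/32\BZ)^\times/\{\pm1\}$ carefully, and verify that the reduction modulo $2$ of $\xi_{\BQ_3}$ lies in $\widetilde I_3^5$ but not in $\widetilde I_3^6$. Since $\widetilde I_3$ is generated by $\gamma-1$ and $(\gamma-1)^8=0$ in $\BF_2[G_3]$, I would first reduce the coefficient vector modulo $2$. Then I would test divisibility by $(\gamma-1)^k$ for successive $k$, using that $\gamma^j\equiv\sum_i\binom{j}{i}(\gamma-1)^i$ modulo $2$.
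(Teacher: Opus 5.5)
Your strategy is the paper's: get $\mu(\xi_{\BQ_n})=0$ and $\lambda(\xi_{\BQ_n})=q_n$ from the explicit base cases $n=2,3$ and the two-step recursion of Lemma \ref{lem3-2-2}(1), then apply Lemma \ref{lem4-1-2}. Your induction for $n\geq 4$ is correct, including the check $q_n=2^{n-2}+q_{n-2}$. There is, however, a genuine gap at $n=3$. Your claim that $q_n<2^{n-1}$ for every $n\geq 2$ fails exactly there: $q_3=5>4=2^{2}$, as your own list shows. So the hypothesis $\lambda<2^{n-1}$ of Lemma \ref{lem4-1-2} does not hold for $\xi_{\BQ_3}$, and your sentence ``since $\lambda<2^{n-1}$ in both cases, Lemma \ref{lem4-1-2} applies'' is false for $n=3$.

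This is not a technicality. Write $f=(\gamma-1)^5u+2f_0$. The leading term contributes valuation $5/4$ under $\psi_3$, but the error term is only guaranteed valuation $\geq 1$, so it can dominate. For example, $(\gamma-1)^5+2\in\Lambda_3$ has $\mu=0$ and $\lambda=5$, yet $v(\psi_3((\gamma-1)^5+2))=1$. So the invariants $\mu=0,\lambda=5$ alone cannot yield $v(\psi_3(\xi_{\BQ_3}))=5/4$.

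The missing step is a direct evaluation of the specialization, which is what the paper does. Using $5^j \bmod 32$ and Lemma \ref{lem4-2-1}, one gets $\xi_{\BQ_3}=-\gamma^2-\gamma^3-2\gamma^4-2\gamma^5-\gamma^6-\gamma^7$. With $\zeta=\zeta_8$ and $\zeta^4=-1$, this gives $\psi_3(\xi_{\BQ_3})=2(1+\zeta)$; the paper writes it as $-\zeta^2(1+\zeta)(1+\zeta^2)^2$, which is the same element since $(1+\zeta^2)^2=2\zeta^2$. Its valuation is $1+\tfrac14=\tfrac54=q_3/2^{2}$.

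You still need $\lambda(\xi_{\BQ_3})=5$, because it feeds the induction for odd $n\geq 5$. So your planned computation at $n=3$ is necessary but not sufficient. The real obstacle at $n=3$ is not computing $\lambda$, but that $\lambda$ no longer determines the valuation. For every other $n\geq 2$, $q_n<2^{n-1}$ does hold, and the rest of your argument goes through.
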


\begin{proof}
From Lemma \ref{s(k/8)}, we obtain that $\xi_{\BQ_1}=0$. Therefore, $\psi_1(\xi_{\BQ_1})=0$. Next, we claim that $\mu(\xi_{\BQ_n})=0$ for all $n\neq1$. Recall that we fix $\gamma=\sigma_5$, a generator of $G_n$, and that $\psi_n(\gamma)=\zeta_{2^n}$, a primitive $2^n$-th root of unity. 
From Lemma \ref{lem4-2-1}, a direct computation shows that
\[\xi_{\BQ_3}=-\gamma^2-\gamma^3-2\gamma^4-2\gamma^5-\gamma^6-\gamma^7\]
and
\[\xi_{\BQ_2}=-\gamma^2-\gamma^3.\]
Thus, we have $\mu(\xi_{\BQ_2})=\mu(\xi_{\BQ_3})=0$. Now, we proceed by induction to establish the claim. For $n\geq2$, we assume that $\mu(\xi_{\BQ_n})=0$.
Recall the trace element $\tau_{\BQ_n}\in\BZ_2[G_n]$, and from part (1) of Lemmas \ref{lem3-2-2} and \ref{s(k/8)}, we have the relation
\begin{equation}\label{4-2-f0}
\pi_{n+2}(\xi_{\BQ_{n+2}})=-\nu_n(\xi_{\BQ_n})-2\tau_{\BQ_n}.
\end{equation}
Applying part (3) of Proposition \ref{prop4-1-4}, we obtain
\begin{equation}\label{4-2-f1}
\mu(\xi_{\BQ_n})=\mu(\nu_n(\xi_{\BQ_n}))=\mu(\nu_n(\xi_{\BQ_n})+2\tau_{\BQ_n})=\mu(\pi_{n+2}(\xi_{\BQ_{n+2}}))\end{equation}
for all $n\geq2$. By the induction hypothesis and part (1) of Proposition \ref{prop4-1-4}, the claim follows. 
We now apply Lemma \ref{lem4-1-2} to calculate the $2$-adic valuation of $\psi_n(\xi_{\BQ_n})$ for $n\neq3$. For $n\geq2$, observe that $\mu(\pi_{n+2}(\xi_{\BQ_{n+2}}))=\mu(\xi_{\BQ_{n+2}})=0$. From part (2) of Proposition \ref{prop4-1-4} and relation \eqref{4-2-f0}, we have $\lambda(\xi_{\BQ_{n+2}})=\lambda(\nu_n(\xi_{\BQ_n}))$. Applying part (4) of Proposition \ref{prop4-1-4}, we obtain the following relation
\begin{equation}\label{4-2-f2}
\lambda(\xi_{\BQ_{n+2}})=\lambda(\xi_{\BQ_n})+2^n
\end{equation}
for all $n\geq2$. Since
\[\xi_{\BQ_3}\equiv \gamma^2(\gamma-1)^5\mod 2\quad\textrm{ and }\quad \xi_{\BQ_2}\equiv \gamma^2(\gamma-1)\mod 2,\]
we can conclude that $\lambda(\xi_{\BQ_2})=1$ and $\lambda(\xi_{\BQ_3})=5$. Combining these with relation \eqref{4-2-f2}, a straightforward computation yields
\[\lambda(\xi_{\BQ_n})=q_n\qquad \forall n\geq2.\] 
Thus, the proposition follows from $\mu(\xi_n)=0$ and Lemma \ref{lem4-1-2}, except for the case $n=3$, where the condition $\lambda(\xi_{\BQ_3})<4$ fails. We now proceed to calculate $v(\psi_3(\xi_{\BQ_3}))$. Let $\zeta=\zeta_8$ for simplicity.  A direct computation gives
\[\psi_3(\xi_{\BQ_3})=-\zeta^2(1+\zeta)(1+\zeta^2)^2.\]
Thus, we find that $v(\psi_3(\xi_{\BQ_3}))=\frac{5}{4}$. This completes the proof of the proposition.
\end{proof}

\medskip

\subsection{$2$-Adic Properties of Modular Elements Associated with $(E,\chi)$ Under Specializations}
 Let $m=pq$, where $p$ and $q$ are distinct primes satisfying $p\equiv q\equiv 7\mod 12$, and such that $a_p$ and $a_q$ are odd. 
 
 We define a new modular element associated with $(E,\chi)$ for level $2^{n+2}m$ as follows
 \[\xi_{2^{n+2}}(m)=\sum_{x\in(\BZ/2^{n+2}m\BZ)^\times}\CS\left(\frac{x}{2^{n+2}m}\right)\tau_x,\]
where $\tau_x$ denotes the element of ${\rm Gal}(\BQ(\zeta_{2^{n+2}m})/\BQ)$ acting on $\zeta_{2^{n+2}m}$ by raising it to the $x$-th power. Recall that $\psi_n$ is the character of $G_n$ with order $2^n$. 
We can view  $\chi\psi_n$ as an even character of ${\rm Gal}(\BQ(\zeta_{2^{n+2}m})/\BQ)$. An easy calculation shows that $\chi\psi_n(\xi_{2^{n+2}}(m))$ lies in $\BZ[\zeta_{2^n}]$.

\begin{thm}\label{thm4-3-1}
Let $\rho_n=\chi\psi_n$. Let $q_n$ be the same sequence as in Proposition \ref{prop4-2-2}. Then the following statements hold:
\begin{enumerate}
\item[(1)]$\rho_1(\xi_{8}(m))=0$. 
\item[(2)]For each positive integer $n\geq2$,  $v(\rho_n(\xi_{2^{n+2}}(m)))=\frac{q_n}{2^{n-1}}$. 
\end{enumerate}
\end{thm}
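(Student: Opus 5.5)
The plan is to compare $\rho_n(\xi_{2^{n+2}}(m))$ with $\psi_n(\xi_{\BQ_n})=\msp_{\psi_n}(\CS)$, whose valuation is known from Proposition \ref{prop4-2-2}. The two should agree up to a $2$-adic unit and an error in $2\BZ_2[\zeta_{2^n}]$.

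\textbf{The case $n=1$.} Here $\psi_1$ is the even character of conductor $8$, namely the character $\eta_8$ of $\BQ(\sqrt2)/\BQ$. So $\rho_1=\chi\eta_8$ is the primitive even character attached to $\BQ(\sqrt{2m})$. By definition,
\[\rho_1(\xi_8(m))=\sum_{x}\rho_1(x)\CS\left(\frac{x}{8m}\right).\]
Part (2) of Theorem \ref{thm3-3-1} identifies this sum with $g(\rho_1)L(E,\rho_1,1)/\Omega_E$. This vanishes because $L(E^{(2m)},1)=0$ by part (2) of Proposition \ref{l-val-em}.

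\textbf{The case $n\ge 2$: reduction modulo $2$.}
\begin{enumerate}
\item[(a)] Write $\rho_n(\xi_{2^{n+2}}(m))-\msp_{\psi_n,2^{n+2}m}(\CS)=\sum_x(\chi(x)-1)\psi_n(x)\CS\left(\frac{x}{2^{n+2}m}\right)$. Group $x$ with $-x$. Since $\chi$ and $\psi_n$ are even and $\CS(\beta)=\CS(-\beta)$, each pair contributes $(\chi(x)-1)\psi_n(x)\cdot 2\CS(\cdot)$. Here $2\CS(\cdot)\in\BZ$ and $\chi(x)-1\in\{0,-2\}$, so the difference lies in $2\BZ[\zeta_{2^n}]$.
\item[(b)] Apply part (2) of Lemma \ref{lem3-3-2} twice, with $\ell=p$ and then $\ell=q$. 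This gives
\[\msp_{\psi_n,2^{n+2}m}(\CS)=\prod_{\ell\in\{p,q\}}\bigl(a_\ell-\psi_n(\ell)-\psi_n(\ell)^{-1}\bigr)\,\msp_{\psi_n}(\CS).\]
\item[(c)] Each Euler factor is a $2$-adic unit. Indeed, $a_\ell$ is odd, and $\zeta+\zeta^{-1}=2+\zeta^{-1}(\zeta-1)^2$ has positive valuation for any $2$-power root of unity $\zeta$.
\end{enumerate}
Hence $\rho_n(\xi_{2^{n+2}}(m))=u\cdot\psi_n(\xi_{\BQ_n})+2w$ with $u$ a unit and $w$ integral.

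\textbf{Conclusion for $n\ne 3$.} By Proposition \ref{prop4-2-2}, $v(\psi_n(\xi_{\BQ_n}))=q_n/2^{n-1}$. This value is $1/2$ for $n=2$, $\tfrac23-\tfrac{2}{3\cdot 2^{n}}$ for even $n$, and $\tfrac23+\tfrac{14}{3\cdot 2^n}<1$ for odd $n\ge5$. In all these cases it is $<1$, so the ultrametric inequality gives $v(\rho_n(\xi_{2^{n+2}}(m)))=q_n/2^{n-1}$.

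\textbf{The obstacle: $n=3$.} Here $v(\psi_3(\xi_{\BQ_3}))=5/4>1$, so the modulo-$2$ congruence does not determine the valuation. I would first try to sharpen step (a) to a congruence modulo $4$. The idea is to split the coefficients by residue class $x\bmod 32$. Then use the Hecke relation \eqref{3-3-f} at $\ell=p,q$, together with the distribution relation at $2$ with $a_2=0$, to express the sums $\sum_{x\equiv c\ (32),\,\chi(x)=-1}2\CS\left(\frac{x}{32m}\right)$ modulo $4$ in terms of the known values $\CS\left(\frac{c}{32}\right)$ of Lemma \ref{lem4-2-1}.

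Combining this with the explicit shape $\psi_3(\xi_{\BQ_3})=-\zeta^2(1+\zeta)(1+\zeta^2)^2$ should show that the correction term $2w$ has valuation exactly $1$ in a controlled residue. Its leading part should then cancel or combine with $u\psi_3(\xi_{\BQ_3})$ so that the total valuation is $5/4$. I expect this bookkeeping of modulo-$4$ Hecke congruences to be the hardest part; the paper indicates it is carried out in Appendix A.
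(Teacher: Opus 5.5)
Your arguments for $n=1$ and for $n\ge 2$, $n\neq 3$, are correct and coincide with the paper's. The ingredients are the root number $-1$ of $E^{(2m)}$, reduction modulo $2$ to $\msp_{\psi_n,2^{n+2}m}(\CS)$, Lemma~\ref{lem3-3-2}(2) at $p$ and $q$ with unit Euler factors, Proposition~\ref{prop4-2-2}, and $q_n<2^{n-1}$.

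The gap is $n=3$. It is part of the statement, and you leave it as a plan. Worse, the one concrete prediction in that plan would give the wrong answer. We have $\psi_3(\xi_{\BQ_3})=-\zeta^2(1+\zeta)(1+\zeta^2)^2=2(1+\zeta)$, of valuation $5/4$. A correction term $2w$ of valuation exactly $1$ would therefore force $v(\rho_3(\xi_{32}(m)))=1$, since terms of different valuation cannot cancel or ``combine''. What must be shown is that $\rho_3(\xi_{32}(m))$ is congruent modulo $4$ to an element of valuation exactly $5/4$; in particular $v(2w)>1$.

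\textbf{What the paper does for $n=3$.} The missing idea is how to extract modulo-$4$ information at all. The paper writes $\rho_3(\xi_{32}(m))=c_1\zeta+c_2\zeta^2+c_3\zeta^3+c_4\zeta^4$. It proves congruences modulo $4$ only for the sums $c_1+c_4$, $c_2+c_3$, $c_1+c_2$, $c_3+c_4$ (Lemma~\ref{ap-lem2}). For $c_1+c_4$ the steps are:
\begin{enumerate}
\item[(a)] The distribution relation at $2$ (with $a_2=0$) trades the classes $\fB_1\cup\fB_{11}$ for $\fB_5\cup\fB_{15}$ plus level-$8$ terms.
\item[(b)] Since $\fB_5\cup\fB_{15}$ covers every odd residue modulo $8$, these level-$8$ terms collapse to $4\sum_a\chi(a)\CS(a/m)\equiv 0\pmod 4$.
\item[(c)] Pairing $\pm a$ and $\pm b$ makes the remaining summands even, so $\chi$ can be dropped modulo $4$.
\item[(d)] The untwisted sum is brought down to level $32$ by \eqref{3-3-f} at $q$ and $p$ (using $a_p,a_q$ odd), and evaluated with Lemma~\ref{ap-lem3}.
\end{enumerate}
These congruences leave $c_1 \bmod 4$ free. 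Odd $c_1$ gives valuation $3/4$ and even $c_1$ gives $5/4$. The modulo-$2$ bound $v\ge 1$ (Lemma~\ref{ap-lem1}) rules out the former.

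\textbf{Why your plan falls short.} You propose to determine each class sum $\sum_{x\equiv c\,(32),\,\chi(x)=-1}2\CS(x/32m)$ separately. That asks for more than these tools deliver. For a single class, the level-$8$ term involves only two residues modulo $8$, so the collapse in step (b) no longer happens and you lose control modulo $4$. The argument needs the combination-plus-parity structure instead.
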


In this subsection, we prove part (2) of Theorem \ref{thm4-3-1}  for $n\neq3$. The case of $n=3$ will be treated in Appendix A. Our approach for $n\neq3$ is distinct from that of Proposition \ref{prop4-2-2}. Specifically, we employ a congruence modulo $2$ to reduce Theorem \ref{thm4-3-1} to Proposition \ref{prop4-2-2}, a step in which the relation stated in part (2) of Lemma \ref{lem3-3-2} plays a role. 

\begin{proof}
Since the root number of $L(E^{(2m)},s)$ is $-1$, part (2) of Theorem \ref{thm3-3-1} gives $\rho_1(\xi_8(m))=0$.  Now consider the case where $n\neq1$ and $n\neq3$. 
Observe that $\rho_n$ is an even character. By part (1) of Proposition \ref{prop3-1} we have
\[\rho_n(\xi_{2^{n+2}}(m))=\sum^{2^{n+1}m}_{x=1}\psi_n(x)\chi(x)\cdot2\CS\left(\frac{x}{2^{n+2}m}\right)\]
Since $2\CS\left(\frac{x}{2^{n+2}m}\right)\in\BZ$ and $\chi$ takes values in $\{\pm1\}$ if \((x,m)=1\), reducing the above equality modulo $2$ yields
\[\rho_n(\xi_{2^{n+2}}(m))\equiv \sum_{1\leq x\leq 2^{n+1}m, (x,m)=1}\psi_n(x)\cdot2\CS\left(\frac{x}{2^{n+2}m}\right)\mod 2.\] 
Recall the definition of $\msp_{\psi_n,2^{n+2}m}(\CS)$. Because $\psi_n$ is even,  from part (1) of Proposition \ref{prop3-1}, we obtain
\begin{equation}\label{4-3-f1}
\rho_n(\xi_{2^{n+2}}(m))\equiv \msp_{\psi_n,2^{n+2}m}(\CS)\mod 2. 
\end{equation} 
Applying (2) in Lemma \ref{lem3-3-2}, together with the identity $\msp_{\psi_n}(\CS)=\psi_n(\xi_{\BQ_n})$, equation \eqref{4-3-f1} becomes
\begin{equation}\label{4-3-f2}
\rho_n(\xi_{2^{n+2}}(m))\equiv \left(\prod_{\ell\mid m}(a_\ell-\psi_n(\ell)-\overline{\psi}_n(\ell))\right)\cdot\psi_n(\xi_{\BQ_n})\mod 2.
\end{equation}
The assumption that $a_p$ and $a_q$ are odd implies 
$$v(a_\ell-\psi_n(\ell)-\overline{\psi}_n(\ell))=0$$
for $\ell=p$ or $q$.  Now the case $n\neq3$ of the theorem follows from \eqref{4-3-f2}, Proposition \ref{prop4-2-2} and the fact that $q_n<2^{n-1}$ for all $n\neq3$.
\end{proof}

\section{Rank-One Twists}\label{sec5}

In this section, we determine the algebraic rank and the analytic rank of $E^{(m)}$ over $\BQ$ and $\BQ_1$, respectively. Combining these results with the non-vanishing conclusions from the previous section, we then proceed to prove the main theorem stated in the introduction.

\subsection{Algebraic and Analytic Rank of $E^{(m)}$ over $\BQ$}

We determine the rank of $E^{(m)}(\BQ)$ using the $2$-descent method. Let $\fK$ be a number field. For each place $v$ of $\fK$, denote by $\fK_v$ the completion of $\fK$ at $v$. Let $A$ be an elliptic curve defined over $\fK$, and let
\[H^1_f(\fK_v,A_2)=\Im\left(A(\fK_v)/2A(\fK_v)\to H^1(\fK_v,A_2)\right)\]
denote the image of $A(\fK_v)/2A(\fK_v)$ under the local Kummer map. The $2$-Selmer group of $A/\fK$ is then defined as 
\[{\rm Sel}_2(A/\fK)={\rm ker}\left(H^1(\fK,A_2)\to\prod_{v}\frac{H^1(\fK_v,A_2)}{H^1_f(\fK_v,A_2)}\right),\]
where the product runs over all places of $\fK$.

\medskip

\begin{lem}\label{lem5-1}
Let $\fL/\fK$ be a quadratic extension of number fields, and denote by $A^\fL$ the quadratic twist of $A$ by $\fL/\fK$. Suppose at least one of the following conditions holds for a place $v$ of $\fK$:
\begin{enumerate}
\item[(i)]$v$ splits in $\fL/\fK$;
\item[(ii)]$v\nmid 2\infty$ and $A(\fK_v)_2=0$; 
\item[(iii)]$A$ has multiplicative reduction at $v$, $\fL/\fK$ is unramified at $v$, and ${\rm ord}_v(\Delta_A)$ is odd, where ${\rm ord}_v$ is the normalized additive valuation on $\fK_v$ with $\ord_v(\varpi_v)=1$ for any uniformizer $\varpi_v$ of $\fK_v$;
\item[(iv)]$v=\BR$ and $(\Delta_A)_v<0$;
\item[(v)]$v$ is a prime where $A$ has good reduction and $v$ is unramified in $\fL/\fK$.
\end{enumerate}
Then 
$$H^1_f(\fK_v,A^\fL_2)=H^1_f(\fK_v,A_2).$$ 
In particular, if this equality holds for every place
$v$ of $\fK$, we have
\[{\rm Sel}_2(A^\fL/\fK)\simeq {\rm Sel}_2(A/\fK).\]
\end{lem}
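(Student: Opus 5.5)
The plan is to identify the two local conditions inside one cohomology group and then check equality place by place.

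\emph{Identification.} Since $A^\fL$ is the quadratic twist of $A$ by $\fL/\fK$, the twisting cocycle takes values in $\{\pm1\}\subset{\rm Aut}(A)$. It therefore acts trivially on $A_2$. This gives a canonical $G_\fK$-isomorphism $A_2\simeq A^\fL_2$, and hence $H^1(\fK_v,A_2)=H^1(\fK_v,A^\fL_2)$ for every $v$, compatibly with restriction from $\fK$. The final assertion about ${\rm Sel}_2$ then follows formally from the local equalities, because the two Selmer groups are kernels of the same map with the same local conditions.

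\emph{General criterion.} For a place $v$ not split in $\fL$, let $w$ be the place of $\fL$ above $v$. I would invoke Kramer's lemma (Kramer, Mazur--Rubin). It states that $H^1_f(\fK_v,A_2)\cap H^1_f(\fK_v,A^\fL_2)$ is the image of $N_{\fL_w/\fK_v}A(\fL_w)$ in $A(\fK_v)/2A(\fK_v)$. Both subspaces are maximal isotropic for the local Tate pairing, so they have the same dimension. Hence equality holds if and only if the norm map $A(\fL_w)\to A(\fK_v)/2A(\fK_v)$ is surjective. I reduce each case to this criterion, or to a direct count.

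\emph{The five cases.}
\begin{enumerate}
\item[(i)] If $v$ splits, then $A\simeq A^\fL$ over $\fK_v$, and the claim is trivial.
\item[(ii)] For $v\nmid2\infty$ one has $\#A(\fK_v)/2A(\fK_v)=\#A(\fK_v)_2$. Since $A^\fL_2\simeq A_2$, also $A^\fL(\fK_v)_2=0$. So both images are $0$.
\item[(iv)] For $v$ real with $\Delta_A<0$, the group $A(\BR)$ is connected, so $A(\BR)/2A(\BR)=0$. The twist has discriminant $d^6\Delta_A$, which is still negative, so both local conditions vanish.
\item[(v)] Take good reduction at $v$ with $\fL_w/\fK_v$ unramified. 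Here I would use Mazur's theorem that the norm $A(\fL_w)\to A(\fK_v)$ is surjective for good reduction in unramified extensions. This covers $v\mid2$ as well; for $v\nmid2$ one can alternatively note that both conditions equal $H^1_{\rm ur}$.
\item[(iii)] Use the Tate parametrization over $\fK_v$, or over the unramified quadratic extension in the nonsplit case. One must show that $N(A(\fL_w))$ together with $2A(\fK_v)$ exhausts $A(\fK_v)$. The units are norms in an unramified extension, and norms of elements of $\fL_w^\times$ have even valuation. Since ${\rm ord}_v(q_A)={\rm ord}_v(\Delta_A)$ is odd, elements of odd valuation are congruent modulo $q_A^\BZ$ to elements of even valuation. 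The norm is then surjective onto $\fK_v^\times/q_A^\BZ$ modulo squares.
\end{enumerate}

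I expect the main obstacle to be case (iii), together with case (v) at $v\mid2$. There one really needs the norm-surjectivity input rather than a simple dimension count. In case (iii) the split and nonsplit subcases must be handled separately. In the nonsplit case, $A(\fK_v)$ sits inside $\fL'^\times/q^\BZ$, with $\fL'$ the unramified quadratic extension, and one must track the parity of the valuation carefully.
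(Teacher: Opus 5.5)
Your proposal is correct and follows the paper's route. The paper's proof consists of the same canonical identification $A_2\simeq A^\fL_2$, which places both Selmer groups in $H^1(\fK,A_2)$, plus a citation of Mazur--Rubin's Lemma~2.10 for the local equalities. Your argument is essentially the standard proof of that cited lemma: Kramer's norm criterion with maximal isotropy, Mazur's norm surjectivity for good reduction in unramified extensions (which also covers $v\mid 2$, a case the paper actually uses), and the Tate parametrization.

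The nonsplit subcase of (iii) that you left open follows by symmetry. There $A^\fL$ is split multiplicative over $\fK_v$ with $\ord_v(\Delta_{A^\fL})$ odd. So running Kramer's criterion with $A$ and $A^\fL$ exchanged reduces it to the split computation you already gave.
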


\begin{proof}
The first claim follows by \cite[Lemma 2.10]{MR-inv}. Because the $G_\fK$-modules $A^\fL_2$ and $A_2$ are canonically isomorphic (see \cite[Subsection 1.4]{MR-inv}), we can regard ${\rm Sel}_2(A^\fL/\fK)$ as a subgroup of $H^1(\fK,A_2)$. Consequently, the identical local conditions imply
 \[{\rm Sel}_2(A^\fL/\fK)\simeq {\rm Sel}_2(A/\fK).\]
\end{proof}

\begin{prop}\label{prop5-2}
We have 
\[{\rm ord}_{s=1}L(E^{(m)},s)=0={\rm rank}(E^{(m)}(\BQ)),\]
and $\Sha(E^{(m)}/\BQ)(2)=0$. In particular, the rank part of the Birch-Swinnerton-Dyer conjecture holds for $E^{(m)}/\BQ$.
\end{prop}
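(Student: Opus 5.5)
The analytic half is immediate. By part (1) of Proposition \ref{l-val-em}, $L(E^{(m)},1)/\Omega_{E^{(m)}}$ is an odd integer, hence nonzero, so ${\rm ord}_{s=1}L(E^{(m)},s)=0$. By the theorem of Coates--Wiles (or Kolyvagin), $E^{(m)}(\BQ)$ is then finite, and $\Sha(E^{(m)}/\BQ)$ is finite. The real task is to show $\Sha(E^{(m)}/\BQ)(2)=0$. For this I will prove ${\rm Sel}_2(E^{(m)}/\BQ)=0$, which also gives rank zero directly, via a $2$-descent that compares $E^{(m)}$ with $E$ using Lemma \ref{lem5-1} with $\fK=\BQ$ and $\fL=\BQ(\sqrt{m})$.

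First I compute ${\rm Sel}_2(E/\BQ)$. Since $E(\BQ)\simeq\BZ/3\BZ$, we have $E(\BQ)_2=0$. Also $L(E,1)\neq0$, so $\Sha(E/\BQ)$ is finite, and BSD for this CM curve gives $|\Sha(E/\BQ)|\cdot\prod c_\ell/|E(\BQ)_{\rm tor}|^2=1/3$. The Tamagawa number at $3$ is $c_3=3$, so $\Sha(E/\BQ)$ is trivial. Alternatively, and more robustly, I would do a direct $2$-descent. Here $E_2$ generates the $S_3$-extension $\BQ(\sqrt[3]{18},\sqrt{-3})$, so $H^1(\BQ,E_2)$ is the kernel of the norm on $K^\times/K^{\times2}$ with $K=\BQ(\sqrt[3]{18})$. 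The relevant class group is odd, and the Selmer conditions at $2$, $3$ and $\infty$ cut the group to $0$. Either way, ${\rm Sel}_2(E/\BQ)=0$.

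Next I check the hypotheses of Lemma \ref{lem5-1} place by place.
\begin{itemize}
\item At $v=\infty$: $\Delta_E=-2187<0$, so condition (iv) applies.
\item At $v=2$: since $m=pq\equiv 49\equiv 1\bmod 8$, the prime $2$ splits in $\BQ(\sqrt{m})$, so condition (i) applies.
\item At $v=3$: $m\equiv 1\bmod 3$, since $p\equiv q\equiv 1\bmod 3$, so $m$ is a $3$-adic square and condition (i) applies again.
\item At $v=p,q$: $E$ has good reduction and $a_\ell$ odd means $\widetilde{E}(\BF_\ell)_2=0$. By Hensel, $E(\BQ_\ell)_2=0$, so condition (ii) applies.
\item At every other prime: $E$ has good reduction and $\fL/\fK$ is unramified, so condition (v) applies.
\end{itemize}
Hence ${\rm Sel}_2(E^{(m)}/\BQ)\simeq{\rm Sel}_2(E/\BQ)=0$.

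From ${\rm Sel}_2(E^{(m)}/\BQ)=0$ we get $E^{(m)}(\BQ)/2=0$ and $\Sha(E^{(m)}/\BQ)_2=0$, so ${\rm rank}\,E^{(m)}(\BQ)=0$ and $\Sha(E^{(m)}/\BQ)(2)=0$. Together with the analytic rank computed above, this gives the rank part of BSD.

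The main obstacle is establishing ${\rm Sel}_2(E/\BQ)=0$ rigorously without simply quoting tables. I would first try to deduce it from Proposition \ref{prop3-1} and the known value $L(E,1)/\Omega_E=1/3$, via the Rubin/Coates--Wiles proof of the $2$-part of BSD for CM curves with $2$ split in the CM field. But $2$ is inert in $\BQ(\sqrt{-3})$, so that route may not apply. The fallback is the explicit $2$-descent over the cubic field $K=\BQ(\sqrt[3]{18})$: check that its class number is odd, describe its units, and determine the local images at $2$ and $3$.
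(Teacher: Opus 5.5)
Your argument is the paper's: nonvanishing of $L(E^{(m)},1)$ from Proposition \ref{l-val-em}, then Lemma \ref{lem5-1} with $\fL=\BQ(\sqrt{m})$ to identify ${\rm Sel}_2(E^{(m)}/\BQ)$ with ${\rm Sel}_2(E/\BQ)=0$. That gives rank $0$ and $\Sha(E^{(m)}/\BQ)(2)=0$ at once.

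One local check is wrong as written. The condition $p\equiv 7\bmod 12$ only forces $p\equiv 3\bmod 4$, so $p\bmod 8$ can be $3$ or $7$. Hence $m\equiv 5\bmod 8$ does occur: for example $(p,q)=(43,103)$, $m=4429$, appears in Appendix C. In that case $2$ is inert in $\BQ(\sqrt{m})$ and condition (i) fails at $v=2$.

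The fix is immediate. Since $m\equiv 1\bmod 4$, the extension $\BQ(\sqrt{m})/\BQ$ is unramified at $2$, and $E$ has good reduction there, so condition (v) applies at $v=2$. Your final bullet already covers this once you drop the separate $v=2$ case. Your checks at $v=3$ (where $m\equiv 1\bmod 3$ is a $3$-adic square, which matters because $E$ has additive reduction there), at $v=p,q$, and at $\infty$ are fine.

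Two smaller remarks. The paper simply asserts ${\rm Sel}_2(E/\BQ)=0$. You are right that the BSD-formula route is not available at the prime $2$ for this curve, so this input really rests on an explicit (or machine) $2$-descent, as in your fallback.

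Also, Coates--Wiles alone gives finiteness of $E^{(m)}(\BQ)$ but not of $\Sha$. This does not matter, since ${\rm Sel}_2(E^{(m)}/\BQ)=0$ already forces both ${\rm rank}\,E^{(m)}(\BQ)=0$ and $\Sha(E^{(m)}/\BQ)(2)=0$.
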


\begin{proof}
By Proposition \ref{l-val-em}, $L(E^{(m)},1)\neq0$. A direct application of the Gross-Zagier theorem and Kolyvagin's theorem then implies that both $E^{(m)}(\BQ)$ and $\Sha(E^{(m)}/\BQ)$ are finite. To show that $\Sha(E^{(m)}/\BQ)(2)=0$, we apply Lemma \ref{lem5-1} with $A=E^{(m)}$, $\fK=\BQ$, $\fL=\BQ(\sqrt{m})$. It follows that 
$$H^1_f(\BQ_v,E^{(m)}_2)=H^1_f(\BQ_v,E_2)$$
for every places $v$ of $\BQ$. Since ${\rm Sel}_2(E/\BQ)=0$, we obtain ${\rm Sel}_2(E^{(m)}/\BQ)=0$. This completes the proof. 
\end{proof}

\medskip

\subsection{Algebraic and Analytic Rank of $E^{(m)}$ over $\BQ_1$}
We determine the rank of $E^{(m)}$ over $\BQ_1$ by showing that certain Heegner points are non-torsion. For a number field $\fK\subset \BQ_\infty$,  denote by $L(E^{(m)}/\fK,s)$ the $L$-series associated with $E^{(m)}$ over $\fK$. It is clear that $L(E/\fK,s)$ is holomorphic on the whole complex plane. 
Since $\BQ_1=\BQ(\sqrt{2})$, we have
\[L(E^{(m)}/\BQ_1,s)=L(E^{(2m)},s)L(E^{(m)},s).\]
As shown in Proposition \ref{prop5-2}, we have $L(E^{(m)},1)\neq0$. In this subsection, we will use the theory of Heegner points to show that, under certain conditions, the $L$-function $L(E^{(2m)},s)$ has a simple zero at $s=1$ and that $E^{(2m)}(\BQ)$ has rank $1$. Considering the Galois action of ${\rm Gal}(\BQ_1/\BQ)$ on $E^{(m)}(\BQ_1)$, we conclude that the Mordell-Weil rank of $E^{(m)}(\BQ_1)$ is $1$. 

\begin{prop}\label{prop5-3}
We have
\[{\rm Sel}(E^{(m)}/\BQ_1)^\vee=\mathbb{Z}_2\]
\end{prop}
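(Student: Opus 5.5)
We will show that ${\rm Sel}(E^{(m)}/\BQ_1)^\vee\simeq\BZ_2$ by splitting the Selmer group into its $\pm$-eigenspaces under ${\rm Gal}(\BQ_1/\BQ)$. Since $\BQ_1=\BQ(\sqrt{2})$ and $2$ is a unit in $\BZ_2$ only up to a factor $2$, we cannot directly decompose. Instead we use restriction and corestriction. The kernel and cokernel of
\[{\rm Sel}(E^{(m)}/\BQ)\oplus{\rm Sel}(E^{(2m)}/\BQ)\to{\rm Sel}(E^{(m)}/\BQ_1)\]
are finite. Indeed, $E^{(m)}(\BQ_1)_2=0$ because $E(\BQ)_{\rm tors}=\BZ/3\BZ$ and $\BQ(E_2)=\BQ(\sqrt[3]{18},\sqrt{-3})$ does not meet $\BQ_1$ nontrivially in the relevant way, so the usual inflation–restriction and local arguments give finite error terms. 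Hence the $\BZ_2$-corank of ${\rm Sel}(E^{(m)}/\BQ_1)$ equals the sum of the coranks over $\BQ$ for $E^{(m)}$ and $E^{(2m)}$. By Proposition \ref{prop5-2}, ${\rm Sel}(E^{(m)}/\BQ)=0$, so it remains to show that ${\rm Sel}(E^{(2m)}/\BQ)$ has corank exactly $1$ and that ${\rm Sel}(E^{(m)}/\BQ_1)^\vee$ is torsion-free.

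For the corank of $E^{(2m)}$, we use Heegner points. We take the imaginary quadratic field $K=\BQ(\sqrt{-2p})$, which by condition (3) and $p\equiv q\equiv 7\bmod 12$ satisfies a suitable Heegner hypothesis for $E^{(-3q)}$. We note that $E^{(2m)}$ is isomorphic over $\BQ$ to a twist of $E^{(-3q)}$ by the character of $K$: $E$ has CM by $\BQ(\sqrt{-3})$, so $E^{(-3)}$ is isogenous to $E$, and $2m=-2p\cdot(-3q)/3$ up to squares. By the Gross–Zagier–Kolyvagin theory, the Heegner point $P_K\in E^{(-3q)}(K)$ being non-torsion gives ${\rm rank}\,E^{(-3q)}(K)=1$ and finite $\Sha$. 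Since $L(E^{(-3q)},1)\neq0$ (by a parity/analogous mod-$2$ computation as in Proposition \ref{l-val-em}), the rank comes entirely from the $K$-twist. This yields ${\rm ord}_{s=1}L(E^{(2m)},s)=1={\rm rank}\,E^{(2m)}(\BQ)$ with $\Sha(E^{(2m)}/\BQ)$ finite, so ${\rm Sel}(E^{(2m)}/\BQ)$ has corank $1$.

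\textbf{Main obstacle: non-torsionness of $P_K$.} Here we would invoke the Kriz–Li congruence (\cite{KL-sigma}) relating $\log P_K$ modulo a prime above $2$, or $3$, to an explicit algebraic $L$-value or class number expression. Condition (4), $3\nmid h(-q)h(-6pq)$, is designed precisely to make that expression a unit, forcing $P_K$ to be non-torsion.

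Finally, for torsion-freeness, the dual of ${\rm Sel}(E^{(m)}/\BQ_1)$ has torsion equal to the dual of the divisible quotient, which is $\Sha(E^{(m)}/\BQ_1)(2)$ modulo its divisible part. Since both $\Sha$'s over $\BQ$ are finite, $\Sha(E^{(m)}/\BQ_1)(2)$ is finite, so ${\rm Sel}=E^{(m)}(\BQ_1)\otimes\BQ_2/\BZ_2\oplus(\text{finite})$. The Pontryagin dual of a finite group is torsion, however, so we must check that the finite part vanishes; as stated, the proposition asserts the dual is exactly $\BZ_2$.

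\textbf{Gap.} I have not found the step that kills the finite part. I would try a $2$-descent over $\BQ_1$ via Lemma \ref{lem5-1} with $\fK=\BQ_1$ and $\fL=\BQ_1(\sqrt m)$, aiming to show ${\rm Sel}_2(E^{(m)}/\BQ_1)\cong{\rm Sel}_2(E/\BQ_1)$ has $\BF_2$-dimension $1$. If that identification holds, the finite part vanishes and the dual is $\BZ_2$. Verifying the local conditions at the primes above $p$, $q$ and $2$ is the part I expect to be delicate, and as of now it is unproved.
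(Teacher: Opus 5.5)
You leave the decisive step open yourself, and it is not a side issue. Finiteness of $\Sha(E^{(m)}/\BQ)$ and $\Sha(E^{(2m)}/\BQ)$ only gives $\Sel(E^{(m)}/\BQ_1)^\vee\simeq\BZ_2\oplus(\text{finite})$ once the corank is known. Excluding the finite summand is exactly what the proposition asserts beyond the corank. The missing step is the $2$-descent you sketch at the end, and its local checks are routine. Apply Lemma \ref{lem5-1} with $A=E^{(m)}$, $\fK=\BQ_1$, $\fL=\BQ_1(\sqrt m)$.
At $v\mid pq$ use (ii). Since $p\equiv 1\mod 3$, ${\rm Frob}_p$ lies in ${\rm Gal}(\BQ(E_2)/\BQ(\sqrt{-3}))\simeq\BZ/3\BZ$. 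Since $a_p$ is odd, it fixes no nonzero point of $E_2$, so it has order $3$. Then ${\rm Frob}_p^2$ also has order $3$, so $E^{(m)}(\BQ_{1,v})_2=E(\BQ_{1,v})_2=0$ whether $p$ splits or is inert in $\BQ(\sqrt2)$; the same holds for $q$.
At $v\mid 2$ use (v): $m\equiv 1\mod 4$, so $E^{(m)}$ has good reduction there and $\fL/\fK$ is unramified. At $v\mid 3$ use (i), since $m\equiv 1\mod 3$. At the real places use (i), since $m>0$. At all other places use (v).
Hence $\Sel_2(E^{(m)}/\BQ_1)\simeq\Sel_2(E/\BQ_1)\simeq\BZ/2\BZ$. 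Since $E^{(m)}(\BQ_1)_2=0$, this is the whole $2$-torsion of $\Sel(E^{(m)}/\BQ_1)$. So $\Sel(E^{(m)}/\BQ_1)$ is either $\BQ_2/\BZ_2$ or finite cyclic, and any proof that the corank is $\geq 1$ finishes the argument.

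For that last step the paper uses no Heegner points. The root number of $E^{(m)}/\BQ_1$ is $w(E^{(m)})\,w(E^{(2m)})=(+1)(-1)=-1$, so the Dokchitser--Dokchitser parity theorem makes the corank odd, hence $1$. Your Heegner route to corank one is heavier and, as written, not valid for this proposition, for three reasons:
\begin{enumerate}
\item[(a)] It uses the hypotheses $\left(\frac{-2p}{q}\right)=1$ and $3\nmid h(-q)h(-6pq)$ of Theorem \ref{thm2}. These are not assumed here; only $p,q\in\fS$ is.
\item[(b)] The twist is wrong. For $K=\BQ(\sqrt{-2p})$, the $\chi_K$-twist of $E^{(-3q)}$ is $E^{(6m)}$, which is isogenous to $E^{(-2m)}$, not $E^{(2m)}$. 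One must start from $E^{(3q)}$, whose $K$-twist is $E^{(-6m)}\sim E^{(2m)}$.
\item[(c)] Your claim $L(E^{(-3q)},1)\neq 0$ cannot come from the argument of Proposition \ref{l-val-em}, which needs the twisting modulus prime to $243$. The paper instead gets the splitting of the rank over $K$ from analytic rank one over $K$ together with root number $-1$ for $E^{(-6m)}$.
\end{enumerate}
Once the $2$-descent and parity are in place, your restriction--corestriction comparison with $\Sel(E^{(m)}/\BQ)\oplus\Sel(E^{(2m)}/\BQ)$ is also unnecessary.
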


\begin{proof}
Apply Lemma \ref{lem5-1} with $A=E^{(m)}$, $\fK=\BQ_1$, $\fL=\BQ_1(\sqrt{m})$. It follows that 
$$H^1_f(\BQ_{1,v},E^{(m)}_2)=H^1_f(\BQ_{1,v},E_2)$$
for every places $v$ of $\BQ_1$. Since ${\rm Sel}_2(E/\BQ_1)=\mathbb{Z}/2\mathbb{Z}$, we obtain ${\rm Sel}_2(E^{(m)}/\BQ_1)=\mathbb{Z}/2\mathbb{Z}$.

Since \(E^{(2m)}\) has root number \(-1\), by the parity in \cite[Theorem 1.4]{DD} the corank of \({\rm Sel}(E^{(m)}/\BQ_1)\) must be 1. 
\end{proof}

\medskip

The following criterion for establishing the non‑torsionness of Heegner points is due to Kriz and Li (see \cite[Theorem 1.20]{KL-sigma}). We restate it here as a theorem.
 
\begin{thm}\label{thm5-2-0}
Let $A/\BQ$ be an elliptic curve of conductor $N$. Suppose $\ell$ is an odd prime such that $A_\ell$ is a reducible ${\rm Gal}(\overline{\BQ}/\BQ)$-module, and write 
$$A^{\rm ss}_\ell\simeq\BF_\ell(\psi)\oplus\BF_\ell(\psi^{-1}\omega)$$ 
for some character $\psi:{\rm Gal}(\overline{\BQ}/\BQ)\to {\rm Aut}(\BF_\ell)\simeq\mu_{\ell-1}$ and the mod $\ell$ cyclotomic character $\omega$. Assume that
\begin{enumerate}
\item[(1)]$\psi(\ell)\neq1$ and $(\psi^{-1}\omega)(\ell)\neq1$;
\item[(2)]$A$ has no prime of split multiplicative reduction;
\item[(3)]If $\ell_0\neq\ell$ is a prime at which $A$ has additive reduction,, then $\psi(\ell_0)\neq1$ and $(\psi^{-1}\omega)(\ell_0)\neq1$.
\end{enumerate}
Let $K$ be an imaginary quadratic field satisfying the Heegner hypothesis for $N$, and denote by $P\in A(K)$ the associated Heegner point. Assume further that $\ell$ splits in $K$ and that the generalized Bernoulli numbers satisfy
\[B_{1,\psi_0^{-1}\chi_K}\cdot B_{1,\psi_0\omega^{-1}}\not\equiv0\mod \ell,\]
where $\chi_K$ is the quadratic character associated to $K/\BQ$, and
$\psi_0$ is $\psi$ or $\psi\chi_K$ according as $\psi$ is even or odd. Then $P$ is a non-torsion point in $A(K)$, and $A/K$ has analytic and algebraic rank $1$.
\end{thm}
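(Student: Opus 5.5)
This is the theorem of Kriz and Li, and within the paper we would simply invoke \cite[Theorem 1.20]{KL-sigma}. If we had to reconstruct the argument, the plan is to detect non-torsionness of $P$ $\ell$-adically. Since $\ell$ splits in $K$, we fix a prime $\lambda\mid\ell$ of $K$ and consider the formal-group logarithm $\log_{\omega_A}(P)\in K_\lambda\simeq\BQ_\ell$. It suffices to show $\log_{\omega_A}(P)\not\equiv0$, after a suitable normalization. The main input is the $\ell$-adic Waldspurger formula of Bertolini--Darmon--Prasanna \cite{BDP}. It identifies $\log_{\omega_A}(P)^2$, up to an explicit Euler factor $(1-a_\ell\ell^{-1}+\ell^{-1})^2$ and harmless constants, with the value of the anticyclotomic BDP $\ell$-adic $L$-function $\mathcal{L}_\ell(A,K)$ at the trivial character, which lies outside its range of interpolation.

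The key step is a congruence for this $\ell$-adic $L$-function. Since $A_\ell^{\rm ss}\simeq\BF_\ell(\psi)\oplus\BF_\ell(\psi^{-1}\omega)$, the $q$-expansion of the newform of $A$ is congruent modulo $\ell$ to an Eisenstein series $E_2^{\psi,\psi^{-1}\omega}$, at least away from the bad primes. We would therefore compare the BDP measure for $A$ with the analogous measure built from this Eisenstein series, by restricting both to CM points and evaluating the $q$-expansion at the trivial character. Hypotheses (2) and (3) are used here. Together with the Heegner hypothesis, they ensure that the $p$-depletion at the primes of additive or multiplicative reduction does not destroy the congruence, and that the local Euler factors introduced are $\ell$-units, namely factors such as $(1-\psi(\ell_0))$ and $(1-\psi^{-1}\omega(\ell_0))$. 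Hypothesis (1) plays the same role for the Euler factor at $\ell$ itself.

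The Eisenstein side factors via Katz's $\ell$-adic $L$-functions. The BDP value for an Eisenstein series becomes a product of two Katz $L$-values for characters of $K$ induced from $\psi$ and $\psi^{-1}\omega$. Because these are (twists of) base changes from $\BQ$, Gross's factorization identifies each Katz value with a product of Kubota--Leopoldt values. Evaluated at the relevant points, these are the generalized Bernoulli numbers $B_{1,\psi_0^{-1}\chi_K}$ and $B_{1,\psi_0\omega^{-1}}$; choosing $\psi_0$ according to parity makes the characters odd, so the $B_1$'s are the correct objects. Putting everything together gives
\[
\Big(\tfrac{1-a_\ell+\ell}{\ell}\Big)^2\log_{\omega_A}(P)^2\equiv (\text{unit})\cdot\big(B_{1,\psi_0^{-1}\chi_K}B_{1,\psi_0\omega^{-1}}\big)^2 \pmod{\ell},
\]
and the Bernoulli hypothesis then forces $\log_{\omega_A}(P)\neq0$, so $P$ is non-torsion.

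Once $P$ is non-torsion, the Gross--Zagier formula gives $\ord_{s=1}L(A/K,s)=1$. Kolyvagin's Euler system argument then gives ${\rm rank}\,A(K)=1$ and finiteness of $\Sha(A/K)$. I expect the main obstacle to be the congruence step. It requires an integral, level-compatible comparison of the BDP measure with its Eisenstein counterpart, including control of the Manin constant and the period normalization so that "unit" is genuinely an $\ell$-unit, and careful tracking of every Euler factor at the bad primes.
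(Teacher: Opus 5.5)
Your proposal agrees with the paper, which gives no proof of its own and simply quotes \cite[Theorem 1.20]{KL-sigma}, exactly as you propose. Your sketch of the underlying Kriz--Li argument is a reasonable outline of what the citation supplies: the Bertolini--Darmon--Prasanna $\ell$-adic formula, the Eisenstein congruence with Euler factors made into units by hypotheses (1)--(3), Gross's factorization producing the two Bernoulli numbers, and then Gross--Zagier and Kolyvagin. The paper itself does not reproduce that argument.
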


\medskip

Recall that the newform associated with $E$ is given by $\sum_{n\geq1}a_nq^n$, where $q=e^{2\pi iz}$. For a squarefree integer $l$, we denote by $h(l)$ the class number of the quadratic number field $\BQ(\sqrt{l})$. 
 
\begin{prop}\label{prop5-2-1}
Let $p$ and $q$ be distinct primes satisfying $p\equiv q\equiv 7\mod 12$ and such that $a_p$ and $a_q$ are odd. Set $m=pq$ and assume further that
\begin{enumerate}
\item[(1)]$\left(\frac{-2p}{q}\right)=1$
\item[(2)]$3$ is coprime to both $h(-q)$ and $h(-6pq)$.
\end{enumerate}
Then 
\[\ord_{s=1}L(E^{(2m)},s)=1={\rm rank}(E^{(2m)}(\BQ)).\]
In particular, we have
\[{\rm ord}_{s=1}L(E^{(m)}/\mathbb{Q}_1,s)=1={\rm rank}(E^{(m)}(\BQ_1)).\]
\end{prop}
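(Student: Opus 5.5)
The plan is to reduce the statement to a Heegner point computation for the twist $A=E^{(-3q)}$ over $K=\BQ(\sqrt{-2p})$, as indicated in the introduction. Since $E$ has CM by $\BQ(\sqrt{-3})$, the twists $E^{(d)}$ and $E^{(-3d)}$ are isogenous over $\BQ$. The product of $\chi_{-3q}$ and $\chi_{-2p}$ is the character attached to $\BQ(\sqrt{6pq})=\BQ(\sqrt{2m\cdot 3})$. Hence
\[L(A/K,s)=L(E^{(-3q)},s)\,L(E^{(6pq)},s)=L(E^{(q)},s)\,L(E^{(2m)},s),\]
where we used the isogenies $E^{(-3q)}\sim E^{(q)}$ and $E^{(6pq)}\sim E^{(-2pq)}\sim E^{(2m)}$ up to the normalization $-3\cdot(-2pq)=6pq$. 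I will check these sign conventions carefully, since the correct pairing of fields is exactly what makes the decomposition work. The first step is therefore to show $L(E^{(q)},1)\neq0$ (or the analogous factor is nonvanishing) with analytic rank $0$. This follows either from a one-prime version of Proposition \ref{l-val-em}, using Lemma \ref{lem3-3-2}(1) with $a_q$ odd, or from the condition $3\nmid h(-q)$. Once that is in place, rank one for $A/K$ transfers to $E^{(2m)}/\BQ$.

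Next I verify the hypotheses of Theorem \ref{thm5-2-0} with $\ell=3$. Since $E$, and hence $A$, has a rational $3$-torsion point up to twist, $A_3$ is reducible. The semisimplification is $\BF_3(\psi)\oplus\BF_3(\psi^{-1}\omega)$, with $\psi$ given by a quadratic character built from $\chi_{-3q}$ and $\omega=\chi_{-3}$, so $\psi\in\{\chi_{q},\chi_{-3q}\}$. Condition (1) at $\ell=3$ and condition (3) at the additive primes $q$ (and $3$) become local splitting conditions, and these should follow from $q\equiv 7\mod 12$. Condition (2) holds because $A$ is a CM twist with only additive bad reduction. For the Heegner hypothesis for $K=\BQ(\sqrt{-2p})$ relative to the conductor $3^5q^2$, I need $3$ and $q$ to split in $K$. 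This follows from $\left(\frac{-2p}{q}\right)=1$ and from $-2p\equiv -14\equiv 1\mod 3$, using $p\equiv 7 \mod 12$. The hypothesis that $3$ splits in $K$ is the same statement.

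The main obstacle is the Bernoulli number condition. Writing $\psi_0$ and $\psi_0\omega^{-1}$ explicitly as quadratic characters, I expect $B_{1,\psi_0^{-1}\chi_K}$ and $B_{1,\psi_0\omega^{-1}}$ to be, up to units, $h(-6pq)$ and $h(-q)$, via $B_{1,\chi_D}=-2h(D)/w_D$ for imaginary quadratic $D$. Then hypothesis (2), that $3$ is coprime to $h(-q)h(-6pq)$, gives exactly the nonvanishing modulo $3$. Identifying which of $\chi_q$ and $\chi_{-3q}$ is $\psi$, and checking parity so that both characters are odd, is the delicate bookkeeping step I would settle first.

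Theorem \ref{thm5-2-0} then gives analytic and algebraic rank $1$ for $A/K$. Combined with rank $0$ for the $E^{(q)}$ factor (via Kolyvagin for algebraic rank), this yields $\ord_{s=1}L(E^{(2m)},s)=1={\rm rank}\,E^{(2m)}(\BQ)$. For the second assertion, use $L(E^{(m)}/\BQ_1,s)=L(E^{(2m)},s)L(E^{(m)},s)$ together with Proposition \ref{prop5-2}. Since $E^{(m)}(\BQ_1)\otimes\BQ$ decomposes into the $\pm$ eigenspaces of ${\rm Gal}(\BQ_1/\BQ)$, namely $E^{(m)}(\BQ)\otimes\BQ$ and $E^{(2m)}(\BQ)\otimes\BQ$, the rank of $E^{(m)}(\BQ_1)$ is $0+1=1$. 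This is consistent with Proposition \ref{prop5-3}.
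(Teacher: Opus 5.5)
You have the paper's strategy (Kriz--Li with $\ell=3$ over $K=\BQ(\sqrt{-2p})$, then factor $L(A/K,s)$), but there is a genuine gap: $A=E^{(-3q)}$ is the wrong curve, and the sign issue you flag is not a matter of normalization. Because $E$ has CM by $\BQ(\sqrt{-3})$ one has $E^{(d)}\sim E^{(-3d)}$. However, $E^{(d)}$ and $E^{(-d)}$ differ by a twist by $\chi_{-1}$, which is not the CM character, so they are not isogenous. With your $A$, the companion factor is $L(E^{(6pq)},s)$ with $E^{(6pq)}\sim E^{(-18pq)}=E^{(-2m)}$. The step $E^{(-2pq)}\sim E^{(2m)}$ is false.

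Worse, Theorem~\ref{thm5-2-0} does not apply to $E^{(-3q)}$ at all:
\begin{enumerate}
\item[(a)] Since $-3q\equiv 3\bmod 4$, the character $\chi_{-3q}$ is ramified at $2$. So $A\sim E^{(q)}$ has additive reduction at $2$ (the conductor is not $3^5q^2$). Since $2$ ramifies in $K$, the Heegner hypothesis fails.
\item[(b)] $A_3^{\rm ss}\simeq\BF_3(\chi_{-3q})\oplus\BF_3(\chi_q)$ and $\chi_q(3)=\left(\frac{q}{3}\right)=1$. So condition (1) fails whichever character you call $\psi$.
\item[(c)] The Bernoulli numbers that arise are those of $\chi_{-3q}$ and $\chi_{-2pq}$. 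The resulting condition would be $3\nmid h(-3q)h(-2pq)$, not hypothesis (2), so the identification you anticipate in your third paragraph cannot occur.
\end{enumerate}

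The correct choice, used in the paper's proof, is $A=E^{(3q)}$; the $E^{(-3q)}$ in the introduction is a misprint. With this choice everything lines up:
\begin{enumerate}
\item[(i)] $3q\equiv1\bmod 4$, so $A\sim E^{(-q)}$ has conductor $3^5q^2$. The Heegner hypothesis is then exactly that $3$ and $q$ split in $K$.
\item[(ii)] $\psi=\chi_{3q}$ is even, with $\psi(3)=0$ and $(\psi^{-1}\omega)(3)=\chi_{-q}(3)=-1$.
\item[(iii)] $\psi_0^{-1}\chi_K=\chi_{-6pq}$ and $\psi_0\omega^{-1}=\chi_{-q}$, which produce $h(-6pq)$ and $h(-q)$.
\item[(iv)] $L(A/K,s)=L(E^{(3q)},s)\,L(E^{(-6pq)},s)$, with $E^{(-6pq)}\sim E^{(18pq)}=E^{(2m)}$.
\end{enumerate}

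You also do not need a separate nonvanishing argument for the other factor, and the routes you suggest are not available. The character $\chi_{-q}$ is odd and $\chi_{3q}$ has conductor not prime to $243$, so Theorem~\ref{thm3-3-1}(2) applies to neither. Instead, $E^{(2m)}$ has root number $-1$, so its order of vanishing is odd. Since the two orders sum to $1$, it equals $1$ and $L(E^{(3q)},1)\neq0$; this is the paper's ``root number consideration.''

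Your final paragraph, deducing the statement over $\BQ_1$, is fine.
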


\begin{proof}
The "in particular" statement follows from the first claim and the remark made at the beginning of this subsection. To prove the first claim we apply Theorem \ref{thm5-2-0} with
$A=E^{(3q)}$, $\ell=3$ and $K=\BQ(\sqrt{-2p})$. Indeed, by \cite[Proof of Lemma 10.3]{KL-sigma}, the module $E^{(3q)}_3$ is reducible as a ${\rm Gal}(\overline{\BQ}/\BQ)$-module, and the character $\psi$ is the quadratic character associated with $\BQ(\sqrt{3q})/\BQ$.  The following observations are made:
\begin{enumerate}
\item[(a)]Since $3q\equiv 3\mod 9$, the condition (1) of Theorem \ref{thm5-2-0} holds (see \cite[Proof of Theorem 10.6]{KL-sigma}). 
\item[(b)]The curve $E^{(3q)}$ has bad reduction only at the primes $3$ and $q$, which are additive. 
\item[(c)]For the prime $q$, we have $\psi(q)=(\psi^{-1}\omega)(q)=0$,  which implies the condition (3) of Theorem \ref{thm5-2-0} is satisfied.
\item[(d)]The Heegner hypothesis is equivalent to the condition $\left(\frac{-2p}{q}\right)=1$.
\item[(e)]From \cite[Proof of Theorem 10.6]{KL-sigma} and the class number formula, assumption (2) of the proposition is equivalent to 
the generalized Bernoulli number assumption in Theorem \ref{thm5-2-0}.
\end{enumerate}
Thus $E^{(3q)}/K$ has analytic and algebraic rank $1$. Using the identity
\[L(E^{(3q)}/K,s)=L(E^{(-6pq)},s)L(E^{(3q)},s),\]
together with a root number consideration, we conclude that $E^{(-6m)}/\BQ$ also has analytic rank $1$. Since $E^{(-6m)}$ and $E^{(2m)}$ are isogenous, $E^{(2m)}$ has analytic rank $1$. The first claim now follows from the Gross-Zagier theorem and Kolyvagin's theorem.  
\end{proof}

\subsection{Proof of the Main Theorem}

We now prove Theorem \ref{thm2}. Part (i) follows from Proposition \ref{l-val-em} and Coates-Wiles theorem. We prove part (ii).
By Theorem \ref{thm4-3-1}, we have
$L(E^{(m)},\rho,1)\neq0$ for every finite-order character $\rho$ of ${\rm Gal}(\BQ_\infty/\BQ)$ except $\psi_1$. Proposition \ref{prop5-2-1} gives
${\rm ord}_{s=1}L(E^{(m)},\psi_1,s)=1$. For each integer $n\geq1$, we use the identity
\[L(E^{(m)}/\mathbb{Q}_n,s)=\prod_\eta L(E^{(m)},\eta,s),\]
where $\eta$ runs over all characters of ${\rm Gal}(\BQ_n/\BQ)$. This yields 
$${\rm ord}_{s=1}L(E^{(m)}/\BQ_n,s)=1.$$

On the other hand, $E^{(m)}$ has complex multiplication by $F=\BQ(\sqrt{-3})$. Let $\CO_F$ be its ring of integers. Write $F_n=F.\BQ_n$ for the compositum of $F$ and $\BQ_n$. The algebraic rank of $E^{(m)}(\BQ_n)$ is equal to the $\CO_F$-rank of $E^{(m)}(F_n)$. We claim that $E^{(m)}(F_n)$ has $\CO_F$-rank $1$. 
Indeed, consider the $\BC[{\rm Gal}(F_n/F)]$-module $E^{(m)}(F_n)\otimes\BC$. Note that
\[E^{(m)}(F_n)\otimes\BC=\bigoplus_{\eta}(E^{(m)}(F_n)\otimes\BC)^\eta,\] 
where $(E^{(m)}(F_n)\otimes\BC)^\eta$ denotes the $\eta$-isotypical component of $E^{(m)}(F_n)\otimes\BC$, and $\eta$ runs over all characters of ${\rm Gal}(F_n/F)$. Applying the main results of Appendix B together with Theorem \ref{thm4-3-1}, we obtain
\[(E^{(m)}(F_n)\otimes\BC)^\eta=0\]
for all $\eta\neq\psi_1$. By Proposition \ref{prop5-2-1} , we have
$(E^{(m)}(F_n)\otimes\BC)^{\psi_1}\simeq E^{(2m)}(F)\otimes\BC$, which is isomorphic to $\CO_F\otimes\BC$.
Hence, ${\rm rank}(E^{(m)}(\BQ_n))=1$ for all $n\geq1$. In particular, ${\rm rank}(E^{(m)}(\BQ_\infty))=1$. 

Finally, since $E^{(m)}$ has good supersingular reduction at $2$, \cite[Theorem 2.14]{CS} implies
\[{\rm rank}_\Lambda \Sel(E^{(m)}/\BQ_\infty)^\vee=1.\]
Because a finitely generated $\BZ_2$-module must be $\Lambda$-torsion, it follows that $\Sha(E/\BQ_\infty)(2)$ has infinite $2$-rank and is of infinite order. This completes the proof of Theorem \ref{thm2}.

\section{Appendix A: Supplement to the Proof of Theorem \ref{thm4-3-1}}\label{sec7}

This appendix treats the case $n=3$ of Theorem \ref{thm4-3-1}. 
For a rational number $\beta=\frac{k}{l}$ with $(k,l)=1$ and $(l,3)=1$, let $\CS(\beta)$ be the real part of the modular symbol associated with $E$.

\begin{prop}\label{ap-prop1}
Let $p$ and $q$ be two distinct primes such that $p\equiv q\equiv 7\mod 12$ and satisfying $a_p$ and $a_q$ are odd. Set $m=pq$. Define
\[\xi_{32}(m)=\sum_{x\in(\BZ/32m\BZ)^\times}\CS\left(\frac{x}{32m}\right)\tau_x,\]
where $\tau_x\in{\rm Gal}(\BQ(\zeta_{32m})/\BQ)$ satisfying $\tau_x(\zeta_{32m})=\zeta^x_{32 m}$. Let $\chi$ be the Dirichlet character attached to $\BQ(\sqrt{m})/\BQ$, and let $\psi_3$ be the faithful character of $G_3={\rm Gal}(\BQ_3/\BQ)$ with $\psi_3(\gamma)=\zeta_{8}$ where $\gamma=\sigma_5$ is a fixed generator of $G_3$. Let $\rho_3=\chi\psi_3$. Then
\[v(\rho_3(\xi_{32}(m)))=\frac{5}{4}.\]  
\end{prop}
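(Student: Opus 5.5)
We need $v(\rho_3(\xi_{32}(m)))=5/4$. The mod $2$ argument of Theorem \ref{thm4-3-1} fails here. By \eqref{4-3-f2} with $n=3$ we have
\[\rho_3(\xi_{32}(m))\equiv u\cdot\psi_3(\xi_{\BQ_3}) \mod 2,\qquad u=\prod_{\ell\mid m}\bigl(a_\ell-\psi_3(\ell)-\overline{\psi}_3(\ell)\bigr),\]
where $v(u)=0$, and Proposition \ref{prop4-2-2} gives $v(\psi_3(\xi_{\BQ_3}))=5/4>1$. So the congruence only shows $v(\rho_3(\xi_{32}(m)))\geq 1$. It cannot separate the value $5/4$ from anything larger, and we need information modulo $4$.

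\textbf{Step 1: reduce to a coefficient statement.} Write $\rho_3(\xi_{32}(m))=\sum_{j=0}^{7}c_j\zeta^j$ with $\zeta=\zeta_8$, where
\[c_j=\sum_{x\in(\BZ/32m\BZ)^\times,\ x\equiv \pm 5^j \bmod 32}\chi(x)\,\CS\Bigl(\frac{x}{32m}\Bigr),\]
and use $\zeta^4=-1$ to get four coordinates in the basis $1,\zeta,\zeta^2,\zeta^3$ of $\BZ_2[\zeta]$. The mod $2$ congruence says these coordinates are congruent mod $2$ to those of $u\,\psi_3(\xi_{\BQ_3})=-u\zeta^2(1+\zeta)(1+\zeta^2)^2$. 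An element of $\BZ_2[\zeta]$ of the form $2w$ has valuation $5/4$ exactly when $v(w)=1/4$. In coordinates, $v(w)=1/4$ means $w\equiv (\zeta-1)\cdot\text{unit}$ modulo $(\zeta-1)^2$, i.e.\ the coordinate sum of $w$ is even and $w\notin(\zeta-1)^2$. So the target is a specific parity pattern for the coordinates of $\rho_3(\xi_{32}(m))/2$, which means determining the $c_j$ modulo $4$.

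\textbf{Step 2: mod $4$ relations via Hecke operators.} To compute the $c_j$ modulo $4$, I would expand $\chi$ over $(\BZ/p)^\times\times(\BZ/q)^\times$ as in the proof of Lemma \ref{lem3-3-2}(2). I would apply the Hecke relation \eqref{3-3-f} at $\ell=p$ and $\ell=q$, with $\beta$ ranging over fractions with denominator $32q$ (resp.\ $32$). This rewrites sums over residues mod $32m$ in terms of $\CS$ at denominators $32q$ and $32$, plus the terms $\CS(\ell\beta)$ and the constants $\sum_x\CS(x/\ell)$. The latter are controlled by Lemma \ref{lem3-3-2}(1) and $L(E,1)/\Omega_E=1/3$.

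Since $\chi$ is the Legendre-symbol product rather than trivial, the sums do not collapse exactly. Instead I would use $\chi(x)\equiv 1 \pmod 2$ together with the identity $\chi(x)=1-2\cdot\mathbf{1}_{\chi(x)=-1}$. This turns the mod $4$ question into the mod $4$ value of $\msp_{\psi_3,32m}(\CS)$, which Lemma \ref{lem3-3-2}(2) gives exactly as $u\,\psi_3(\xi_{\BQ_3})$. The remaining correction is twice a sum over $\chi(x)=-1$, and only its parity matters.

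I expect this correction term to be the main obstacle. My first attempt would be to show its parity is determined by $\CS$ values with denominator $32$ (Lemmas \ref{s(k/8)}, \ref{lem4-2-0}, \ref{lem4-2-1}) and by $a_p,a_q,p,q$ modulo small powers of $2$, using the symmetry $\CS(\beta)=\CS(-\beta)$ and $p\equiv q\equiv 7 \pmod{12}$. Note that $p\equiv q\equiv 3 \pmod 4$, so $m\equiv 1 \pmod 4$.

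\textbf{Step 3: finish.} Once the corrected coordinates modulo $4$ are known, divide by $2$, check the coordinate-sum parity and the non-membership in $(\zeta-1)^2$, and conclude $v=5/4$. Because $\psi_3(\xi_{\BQ_3})$ has the explicit form $-\zeta^2(1+\zeta)(1+\zeta^2)^2$, this last check is a finite computation.
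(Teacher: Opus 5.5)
Your Step~1 is sound, and so is the exact identity $\msp_{\psi_3,32m}(\CS)=u\,\psi_3(\xi_{\BQ_3})$ from Lemma~\ref{lem3-3-2}(2). Step~2, however, has a genuine gap: the decomposition $\chi=1-2\cdot\mathbf{1}_{\chi=-1}$ does not reduce the problem, it only restates it.

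Write $\rho_3(\xi_{32}(m))=u\,\psi_3(\xi_{\BQ_3})-2C$, where $C=\sum\psi_3(x)\cdot 2\CS(x/32m)\in\BZ[\zeta]$ and the sum runs over representatives modulo $\pm1$ of the $x$ with $\chi(x)=-1$. Then $v(\rho_3(\xi_{32}(m)))=5/4$ holds exactly when $C\in(\zeta-1)^2$. If $v(C)=1/4$, the two terms cancel to valuation $\geq 3/2$; if $v(C)=0$, the valuation is $1$. But $2C=u\,\psi_3(\xi_{\BQ_3})-\rho_3(\xi_{32}(m))$, so knowing $C$ modulo $(\zeta-1)^2$ is equivalent to knowing $\rho_3(\xi_{32}(m))$ modulo $2(\zeta-1)^2$. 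That is the original question.

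The tools you name also cannot evaluate $C$. The relation \eqref{3-3-f} at $\ell=p$ or $q$ sums over a complete residue system modulo $\ell$, whereas the restriction $\chi(x)=-1$ is a Legendre-symbol condition modulo $p$ and $q$. This is exactly why Lemma~\ref{lem3-3-2}(2) only treats characters of conductor prime to $\ell$. Your ``first attempt'' on the correction term therefore has no mechanism behind it.

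The missing idea is to apply \eqref{3-3-f} at $\ell=2$, where $a_2=0$. This pairs $x$ with $x+16m$ modulo $32m$ (the classes $\pm5\leftrightarrow\mp11$, $\pm1\leftrightarrow\mp15$, etc.\ modulo $32$), and paired classes have the same $\chi$-value. Write $\rho_3(\xi_{32}(m))=c_1\zeta+c_2\zeta^2+c_3\zeta^3+c_4\zeta^4$.

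The paper uses this pairing to express $c_1+c_4$, up to the sign $\chi(2)$, as $2\sum_a\chi(a)\sum_b\CS\bigl(\frac{a}{m}+\frac{b}{32}\bigr)$ plus a level-$8m$ term; $c_2+c_3$, $c_1+c_2$, $c_3+c_4$ are handled the same way. By a second use of $\ell=2$ together with $\chi(16)=\chi(64)=1$, the level-$8m$ term equals $4\sum_a\chi(a)\CS\bigl(\frac{a}{m}\bigr)\equiv 0 \bmod 4$.

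After symmetrizing $a\mapsto -a$, $b\mapsto -b$, every summand is an even integer. So $\chi(a)\in\{\pm1\}$ may be replaced by $1$ modulo $4$; this factor $2$ is what legitimately removes $\chi$. Only then do the relations at $\ell=q,p$ (with $a_p,a_q$ odd) reduce everything to denominator-$32$ values, which are evaluated via Lemma~\ref{ap-lem3}.

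Note also that the paper does not determine each $c_j$ modulo $4$. The four pairwise congruences fix $\rho_3(\xi_{32}(m))$ modulo $4$ only up to the parity of $c_1$. The odd case gives valuation $3/4$, which is excluded by the bound $v\geq 1$ coming from \eqref{4-3-f2}.
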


By the proof of Theorem \ref{thm4-3-1},  we know that $\rho(\xi_{32}(m))$ lies in $\BZ_2[\zeta_8]$. Proposition \ref{prop4-2-2} together with congruence \eqref{4-3-f2} yield the following estimate.

\begin{lem}\label{ap-lem1}
We have $v(\rho_3(\xi_{32}(m)))\geq1$.
\end{lem}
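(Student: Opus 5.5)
We prove Lemma \ref{ap-lem1} by following the argument of Theorem \ref{thm4-3-1} at level $n=3$, up to the congruence \eqref{4-3-f2}, and then reading off a lower bound for the valuation. Nothing in that derivation used $n\neq3$.

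First, $\rho_3=\chi\psi_3$ is even. By part (1) of Proposition \ref{prop3-1} we can therefore write
\[\rho_3(\xi_{32}(m))=\sum_{x=1}^{16m}\psi_3(x)\chi(x)\cdot2\CS\left(\frac{x}{32m}\right),\]
where every $2\CS(\cdot)$ lies in $\BZ$. For $x$ coprime to $m$ we have $\chi(x)\equiv1\pmod 2$, and the other terms carry $\chi(x)=0$. Reducing modulo $2$ in $\BZ_2[\zeta_8]$ and folding the sum back with $\CS(\beta)=\CS(-\beta)$ gives
\[\rho_3(\xi_{32}(m))\equiv\msp_{\psi_3,32m}(\CS)\pmod 2.\]

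Next we apply part (2) of Lemma \ref{lem3-3-2} twice, with $\ell=p$ and then $\ell=q$. This is legitimate because $\psi_3$ is nontrivial and primitive of conductor $32$, which divides $32m/\ell$, and because $(m,N)=1$. It gives
\[\msp_{\psi_3,32m}(\CS)=\prod_{\ell\mid m}\bigl(a_\ell-\psi_3(\ell)-\overline{\psi}_3(\ell)\bigr)\cdot\msp_{\psi_3}(\CS),\qquad \msp_{\psi_3}(\CS)=\psi_3(\xi_{\BQ_3}).\]
Each Euler factor lies in $\BZ_2[\zeta_8]$. By the end of the proof of Proposition \ref{prop4-2-2},
\[\psi_3(\xi_{\BQ_3})=-\zeta^2(1+\zeta)(1+\zeta^2)^2,\]
and this has valuation $5/4\ge1$. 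Hence the right-hand side of the congruence has valuation at least $1$, whatever the (unit) Euler factors are.

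Finally, write $\rho_3(\xi_{32}(m))=A+2B$ with $v(A)\ge 5/4$ and $B\in\BZ_2[\zeta_8]$. Since $v(2B)\ge1$, we get $v(\rho_3(\xi_{32}(m)))\ge\min(5/4,1)=1$.

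The only point that needs care is integrality, so that "mod $2$" makes sense in $\BZ_2[\zeta_8]$. The values $2\CS(x/32m)$ are integers because $(32m,243)=1$, using \eqref{3-2-f} with $\nu_E=1$. The Euler factors are integral since $a_\ell\in\BZ$. This argument yields only the bound $\ge1$. Pinning down the exact value $5/4$ needs the mod $4$ refinements described in the introduction, which is why Lemma \ref{ap-lem1} is stated only as an estimate.
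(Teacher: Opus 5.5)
Your proposal is correct and follows the paper's own argument: the paper obtains Lemma \ref{ap-lem1} by specializing the congruence \eqref{4-3-f2} to $n=3$ and combining it with $v(\psi_3(\xi_{\BQ_3}))=5/4$ from Proposition \ref{prop4-2-2}, exactly as you do. One small point, which the paper also passes over, is that Lemma \ref{lem3-3-2} is stated for squarefree $M$, but its part (2) only uses $(\ell, M/\ell)=1$, and this holds for $M=32m$ and $\ell\in\{p,q\}$.
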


Let $\zeta=\zeta_8$ for simplicity. Since $\BZ_2[\zeta]$ has rank $4$ over $\BZ$,  we assume that
\begin{equation}\label{ap-f1}
\rho_3(\xi_{32}(m))=c_1\zeta+c_2\zeta^2+c_3\zeta^3+c_4\zeta^4,
\end{equation}
where $c_i$ (for $1\leq i\leq 4)$ are integers.
The key part of the proof for Proposition \ref{ap-prop1} lies in establishing the following higher congruences between the coefficients $c_i$.

\begin{lem}\label{ap-lem2}
The following higher congruence relations hold:
\[c_1+c_4\equiv 2\sum_{x=\pm5,\pm15}\CS\left(\frac{x}{32}\right)\equiv0\mod4\]
\[c_2+c_3\equiv 2\sum_{x=\pm3,\pm7}\CS\left(\frac{x}{32}\right)\equiv 0\mod 4\]
\[c_1+c_2\equiv 2\sum_{x=\pm5,\pm7}\CS\left(\frac{x}{32}\right)\equiv 2\mod 4\]
\[c_3+c_4\equiv 2\sum_{x=\pm3,\pm15}\CS\left(\frac{x}{32}\right)\equiv 2\mod 4.\]
\end{lem}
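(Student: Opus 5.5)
The plan is to express each combination $c_i+c_j$ as a twisted modular-symbol sum over a union of residue classes modulo $32$, and then reduce modulo $4$ to an untwisted sum that the Hecke relations of Proposition \ref{prop3-1} evaluate.

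\textbf{Step 1: the $c_i$ as class sums.} Write $\rho_3(\xi_{32}(m))=\sum_{a}\psi_3(a)T(a)$, where $a$ runs over $(\BZ/32\BZ)^\times$ and
\[T(a)=\sum_{x\equiv a\ (32),\,(x,m)=1}\chi(x)\,\CS\left(\frac{x}{32m}\right).\]
Every $a$ is $\pm 5^j$, and $\psi_3(\pm5^j)=\zeta^j$ because $\rho_3$ is even. Using $\zeta^{j+4}=-\zeta^j$ we get $c_i=(T(5^i)+T(-5^i))-(T(5^{i+4})+T(-5^{i+4}))$ for $1\le i\le 4$, indices read modulo $8$. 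Since $\chi$ and $\CS$ are even, $T(a)=T(-a)$, so each pair is $2T(a)$. Each $c_i+c_j$ then becomes $2\sum_{a\in H}\pm T(a)$ for an explicit set $H$ of eight residues. Modulo $4$ the signs can be dropped, because $2T(a)$ is an integer and $2\cdot 2T(a)\equiv 0 \pmod 4$ can absorb a sign change only after Step 2 has shown the relevant parities. So we must compute $2\sum_{a\in H}T(a)$ modulo $4$.

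\textbf{Step 2: removing the twist.} Write $\chi(x)=1-(1-\chi(x))$, where $1-\chi(x)\in\{0,2\}$. Then
\[2\sum_{a\in H}T(a)\equiv 2\sum_{a\in H}\ \sum_{x\equiv a,\,(x,m)=1}\CS\left(\frac{x}{32m}\right)-4\sum_{\chi(x)=-1}\CS\left(\frac{x}{32m}\right)\pmod 4.\]
The last term is $2\sum 2\CS(\cdot)$ with $2\CS\in\BZ$. I expect it to vanish modulo $4$ after pairing $x\leftrightarrow 32m-x$ within $H$, since $H$ is stable under $a\mapsto -a$ and this pairing doubles the integer $2\CS$; this requires that the $\pm$ pairs genuinely lie in $H$. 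The remaining untwisted sum over $(x,m)=1$, $x\equiv a \pmod{32}$, I would treat with the Hecke relation \eqref{3-3-f} at $\ell=p$ and $\ell=q$, exactly as in the proof of Lemma \ref{lem3-3-2}(2).

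\textbf{Step 3: Hecke reduction and numerics.} Applying \eqref{3-3-f} first at $p$ and then at $q$ produces three kinds of terms: $a_\ell\cdot(\text{sum at level }32m/\ell)$, the shifted sums $\CS(\ell\beta)$, and constant terms $\sum_x\CS(x/\ell)$. The constant terms are controlled by Lemma \ref{lem3-3-2}(1), which gives $(a_\ell-2-(\ell-1))/3$, an even integer. The shifted sums move the class $a$ to $\ell a$ or $\ell^{-1}a$; the mod-$32$ classes of $p$ and $q$ enter here, and I expect $H$ to be stable under these moves, perhaps up to sign. Since $a_p,a_q$ are odd, the net factor is odd (congruent to $(a_p-2)(a_q-2)\cdot(\text{unit})$), and modulo $4$, after doubling, we obtain $2\sum_{a\in H}\CS(a/32)$. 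Finally Lemma \ref{lem4-2-1} and Lemma \ref{s(k/8)} give the numerical values:
\begin{itemize}
\item for $\{\pm5,\pm15\}$: $2(0+0-1-1)=-4\equiv0$;
\item for $\{\pm3,\pm7\}$: $2(-2)\equiv0$;
\item for $\{\pm5,\pm7\}$: $2(-1)\equiv2$;
\item for $\{\pm3,\pm15\}$: $2(-3)\equiv 2$.
\end{itemize}

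\textbf{Main obstacle.} The delicate part is Step 3 modulo $4$ rather than modulo $2$. The shifted terms $\CS(\ell\beta)$ permute residue classes by multiplication by $p$ and $q$, and a factor of $2$ may be lost. I would first try to show that the relevant $H$, namely the union of $\{\pm1,\pm5^{j}\}$-type cosets in $(\BZ/32\BZ)^\times$, is a union of cosets of the subgroup generated by $p,q$ modulo $32$. If $p,q \bmod 32$ do not preserve $H$, I would fall back on combining with the complementary coset and using that the total sum over all classes is given by Lemma \ref{lem3-3-2}(1) and the value $\psi_2(\xi_{\BQ_2})$ from Proposition \ref{prop4-2-2}. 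Controlling the $(1-\chi)$ correction in Step 2 is the second point to check carefully.
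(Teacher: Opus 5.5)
There is a genuine gap, and it is in Step 1. You cannot drop the signs in $c_i+c_j$ modulo $4$, and doing so actually gives the wrong answer. Reversing the sign of $-2T(r)$ changes the value by $4T(r)$, but $T(r)$ is only known to lie in $\frac12\BZ$. Step 2 only removes $\chi$ on $\pm$-stable sets, so it says nothing about the parity of $2T(r)$.

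What governs the signs is \eqref{3-3-f} with $\ell=2$ and $a_2=0$. Take $\beta=x/16m$ and sum against $\chi$; the $\CS(\frac12)$ term vanishes because $\sum\chi=0$. This gives
\[
T(r)+T(r+16)=-V(r),\qquad V(r):=\sum_{y\bmod 8m,\; y\equiv r\ (8)}\chi(y)\,\CS\left(\frac{y}{8m}\right).
\]
Two further applications of the same relation, at levels $8m$ and $2m$, give $V(1)+V(3)=2\chi(2)\sum_{a\bmod m}\chi(a)\CS(\frac{a}{m})\in 2\BZ$.

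Hence $T(11)=-T(5)-V(3)$ and $T(9)=-T(7)-V(1)$. Your sign-dropped version of $c_1+c_2$, namely $2T(5)+2T(7)+2T(9)+2T(11)$, is therefore identically $-2(V(1)+V(3))\equiv 0\pmod 4$. The same happens for the other three pairs. Carried out consistently, your argument would prove $c_1+c_2\equiv c_3+c_4\equiv 0\pmod 4$, contradicting the statement. Your final numbers come out right only because the eight-residue set $H$ silently becomes a four-residue set at the end. Over $H=\{\pm5,\pm7,\pm9,\pm11\}$ you would get $2\sum_{x\in H}\CS(\frac{x}{32})=-8\equiv 0$.

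The missing idea is to use this $\ell=2$ relation instead of discarding signs. This is the paper's key step: it applies \eqref{3-3-f} at $\ell=2$ with $\beta=\frac{2a}{m}+\frac{b}{16}$, $b\in\fB_1\cup\fB_{11}$. It yields $c_1=4T(5)+2V(3)$, $c_2=4T(7)+2V(1)$, $c_3=4T(3)+2V(3)$ and $c_4=4T(15)+2V(1)$. So $c_i+c_j\equiv 4(T(r_i)+T(r_j))\pmod 4$. Now $4T(r)=\sum_{x\equiv\pm r}\chi(x)\,2\CS(\frac{x}{32m})$ carries the doubled coefficient. Your Step 2 pairing then legitimately removes $\chi$ modulo $4$ on the four-residue set $R=\{\pm r_i,\pm r_j\}$.

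Step 3 also has a gap. The set $R$ is not stable under multiplication by $p,q$ modulo $32$; for $q=7$, $7\cdot\{\pm5,\pm15\}=\{\pm3,\pm9\}$. So the reduction at $\ell=q$ and $\ell=p$ does not produce an odd multiple of $\sum_{x\in R}\CS(\frac{x}{32})$. Work modulo $2$: the coefficients $a_p$, $a_q$, $a_pa_q$ are odd, and signs are irrelevant because each class sum over a $\pm$-stable set is an integer. The reduction then produces the nine translates $kR$, $k\in\{1,p^{\pm1},q^{\pm1},m^{\pm1},(pq^{-1})^{\pm1}\}$.

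To finish you need a numerical fact about the values $\CS(\frac{x}{32})$: for every odd $k$, $\sum_{x\in kR}\CS(\frac{x}{32})+\sum_{x\in k^{-1}R}\CS(\frac{x}{32})$ is even. This is Lemma \ref{ap-lem3} for $R=\{\pm5,\pm15\}$, together with its analogues for the other three sets. With it, after doubling only the $k=1$ term survives modulo $4$. Your fallback through Lemma \ref{lem3-3-2}(1) and $\psi_2(\xi_{\BQ_2})$ does not control these individual translates.

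A smaller point: the constant $\sum_{x}\CS(\frac{x}{\ell})=(a_\ell-\ell-1)/3$ is odd, not even; for $\ell=7$ it is $-1$. It disappears only because of the multiplicity $|R|=4$.
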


The challenging part of proving each of these congruences lies in establishing the first congruence in each case. Once this is done, the subsequent congruences follow easily from Lemma \ref{lem4-2-1}. The strategy to prove the first congruence involves an application of the Hecke action on modular symbols, alongside the relation in \eqref{3-3-f}. We will provide the details of this after completing the proof of Proposition \ref{ap-prop1}.

\begin{proof}[Proof of Proposition \ref{ap-prop1}]
Solving the four congruences in Lemma \ref{ap-lem2}, we obtain the following relations for the coefficients $c_i$ modulo $4$:
\[c_4\equiv -c_1\mod 4,\quad c_2\equiv 2-c_1\mod 4,\quad c_3\equiv c_1-2\mod 4.\]
Substituting these expressions into equation \eqref{ap-f1}, we get the following congruence 
\[\rho_3(\xi_{32}(m))\equiv c_1\zeta+(2-c_1)\zeta^2+(c_1-2)\zeta^3-c_1\zeta^4\mod 4.\]
Additionally, we have the following cases depending on the value of $c_1$ modulo $4$
\[\rho_3(\xi_{32}(m))\equiv\begin{cases}2\zeta^2(1+\zeta) \mod 4\quad &\textrm{if $c_1\equiv 0\mod 4$,} \\ \zeta(1+\zeta)(1-\zeta^2) \mod 4\quad &\textrm{if $c_1\equiv 1\mod 4$,}\\  2\zeta(1+\zeta^3)\mod 4 \quad &\textrm{if $c_1\equiv 2\mod 4$,}\\ -\zeta(1+\zeta)(1-\zeta^2)\mod 4 \quad &\textrm{if $c_1\equiv 3\mod 4$.}\end{cases}\]
Therefore, the valuation $v(\rho_3(\xi_{32}(m)))$ is either $\frac{3}{4}$ or $\frac{5}{4}$, depending on whether $c_1$ is odd or even. However, from Lemma \ref{ap-lem1}, we must have $v(\rho_3(\xi_{32}(m)))=\frac{5}{4}$. The proposition follows.
\end{proof}

In the final part of this appendix, we prove the following congruence
\begin{equation}\label{ap-f2}
c_1+c_4\equiv 2\sum_{x=\pm5,\pm15}\CS\left(\frac{x}{32}\right)\mod4, 
\end{equation}
The other congruences in Lemma \ref{ap-lem2} can be shown in a similar way.
For any odd integer $k$, let $k'$ denote an integer which is the inverse of $n$ modulo $32$. 

\begin{lem}\label{ap-lem3}
For any odd integer $k$, we have
\[\sum_{x=\pm5 k,\pm15k;\, 1\leq x\leq 32}\CS\left(\frac{x}{32}\right)+\sum_{x=\pm5 k',\pm15k';\, 1\leq x\leq 32}\CS\left(\frac{x}{32}\right)=-4.\]
\end{lem}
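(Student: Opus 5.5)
The statement is a finite identity about the values $\CS\left(\frac{x}{32}\right)$, and all of these are already known. So I would prove it by a reduction followed by a direct check, not by a further Hecke argument. The inputs are Lemma \ref{lem4-2-1}, which gives $\CS\left(\frac{x}{32}\right)$ for odd $x\in[1,16]$, and part (1) of Proposition \ref{prop3-1}, which gives evenness $\CS(\beta)=\CS(-\beta)$ and $1$-periodicity. Together these determine $\CS\left(\frac{x}{32}\right)$ for every odd $x$ modulo $32$. Throughout I read $k'$ as the inverse of $k$ modulo $32$.

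\textbf{Step 1: reduction.} The residues $\pm5k$ and $\pm15k$ modulo $32$ are four distinct odd classes. By evenness the two members of each $\pm$ pair contribute equally, so
\[\sum_{x=\pm5k,\pm15k}\CS\left(\frac{x}{32}\right)=2\left(\CS\left(\tfrac{5k}{32}\right)+\CS\left(\tfrac{15k}{32}\right)\right),\]
and likewise for $k'$. Writing $T(k)=\CS\left(\frac{5k}{32}\right)+\CS\left(\frac{15k}{32}\right)$, the claim becomes
\[T(k)+T(k')=-2 .\]

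\textbf{Step 2: shrink the set of $k$.} The expression $T(k)+T(k')$ is unchanged under $k\mapsto -k$, because $(-k)'=-k'$ and $\CS$ is even. It is also unchanged under $k\mapsto k'$. So it suffices to take $k\in\{1,3,5,\dots,15\}$ and then group each $k$ with its inverse.

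\textbf{Step 3: finite verification.} Using the table $\CS\left(\frac{x}{32}\right)$ for $x=1,3,5,7,9,11,13,15$, namely $0,-\tfrac12,0,-\tfrac12,-\tfrac12,-1,-\tfrac12,-1$, I would check the identity class by class. Two examples:
\begin{itemize}
\item $k=1$: here $k'=1$ and $T(1)=0+(-1)$, so the sum is $-2$.
\item $k=3$: here $k'=11$, and $5\cdot 3\equiv 15$, $15\cdot 3\equiv 13$, so $T(3)=-1-\tfrac12$. Also $5\cdot 11\equiv -9$ and $15\cdot 11\equiv 5$, so $T(11)=-\tfrac12+0$. The total is $-2$.
\end{itemize}
The remaining classes $k=5,7,9,13,15$ (with their inverses $13,7,25\equiv-7,5,15$) are handled in the same way.

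\textbf{Main obstacle.} There is no conceptual difficulty; the risk is arithmetic. The points needing care are reducing $5k$, $15k$, $5k'$ and $15k'$ modulo $32$ into the range $[1,16]$ up to sign, and computing the inverses $k'$ correctly. If one wants to avoid tabulation for the $k'$-terms, an alternative is to use the identity $\CS\left(\frac{k}{32}\right)$ versus $\CS\left(\frac{k'}{32}\right)$ coming from the Atkin–Lehner/$\Gamma_0(N)$-action on modular symbols. However, the direct check with the complete table is the route I would actually take.
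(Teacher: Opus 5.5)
Your proposal is correct and follows the paper's route. The paper likewise tabulates $\sum_{x=\pm5k,\pm15k}\CS\left(\frac{x}{32}\right)$ for $k=1,3,\dots,15$ from Lemma~\ref{lem4-2-1}, getting $-2,-3,-3,-2,-2,-1,-1,-2$, and then pairs each $k$ with $k'$.

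There is one arithmetic slip in Step 3. The inverse of $7$ modulo $32$ is $23\equiv-9$, not $7$, since $7\cdot 7=49\equiv 17$; this also matches your own $9'\equiv-7$. The check still goes through, because $T(7)=\CS\left(\frac{3}{32}\right)+\CS\left(\frac{9}{32}\right)=-1$ and $T(9)=\CS\left(\frac{13}{32}\right)+\CS\left(\frac{7}{32}\right)=-1$, so $T(7)+T(7')=-2$.
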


\begin{proof}
The lemma follows from Lemma \ref{lem4-2-1} and the following table
\begin{center}
\begin{scriptsize}
\begin{table}[H]
\renewcommand{\arraystretch}{2}
\center
\begin{tabular}{|c|c|c|c|c|c|c|c|c|}
    \hline \(k\) & \(1\) & \(3\) & \(5\) & \(7\) & \(9\) & \(11\) & \(13\) & \(15\) \\
\hline \(\sum\limits_{x=\pm 5k,\pm 15k;\, 1\leq x\leq 32} \CS\left(\frac{x}{32}\right)\) & -2 & -3 & -3 & -2 & -2 & -1 & -1 & -2 \\
    \hline
\end{tabular}.
\end{table}
\end{scriptsize}
\end{center}
\end{proof}

\medskip

\begin{proof}[Proof of \eqref{ap-f2}]
By definition, we have
\[\rho_3(\xi_{32}(m))=\sum_{1\leq k\leq 32m,\, (k,2m)=1}\chi(k)\psi_3(k)\CS\left(\frac{k}{32m}\right).\]
Here, $\psi_3$ is regarded as a character on $(\BZ/32m\BZ)^\times$ by the composite of maps
\[\left(\BZ/32m\BZ\right)^\times\to \left(\BZ/32\BZ\right)^\times/\{\pm1\}\to \overline{\BQ}^\times,\]
where the first map is the quotient map and the second map is given by $\psi_3$, since $G_n$ is isomorphic to $(\BZ/32\BZ)^\times/\{\pm1\}$.
Thus, the equation \eqref{ap-f1} holds under the identification $\psi_3(5\mod 32)=\zeta$. 

By comparing both sides of \eqref{ap-f1}, we obtain
\[c_1\zeta+c_4\zeta^4=\sum_{\substack{1\leq k\leq 32m \\ k\equiv \pm1,\pm5,\pm11,\pm15\mod 32}}\chi(k)\psi_3(k)\CS\left(\frac{k}{32m}\right).\]
Since $m$ is odd, we can write $k=bm+32a$ with $a\in(\BZ/m\BZ)^\times$ and $b\in(\BZ/32\BZ)^\times$. Then
\begin{equation}\label{ap-f3}
c_1\zeta+c_4\zeta^4=\sum^m_{a=1}\sum_{b\in\fB}\chi(32a)\psi_3(bm)\CS\left(\frac{a}{m}+\frac{b}{32}\right),
\end{equation}
where $\fB$ denotes the set $\{\pm m',\pm5m',\pm 11m',\pm 15 m'\mod 32\}$. For $i=1,5,11,15$, we define $\fB_i=\{\pm im'\mod 32\}$, and consequently 
\[\fB=\bigsqcup_{i=1,5,11,15}\fB_i.\] 
By comparing the coefficients of $\zeta$ and $\zeta^4$ on both sides of equation \eqref{ap-f3}, we obtain the following expressions for $c_1$ and $c_4$
\[c_1=\left(\sum^m_{a=1}\sum_{b\in\fB_5}-\sum^m_{a=1}\sum_{b\in\fB_{11}}\right)\chi(32a)\CS\left(\frac{a}{m}+\frac{b}{32}\right)\]
and
\[c_4=\left(\sum^m_{a=1}\sum_{b\in\fB_{15}}-\sum^m_{a=1}\sum_{b\in\fB_{1}}\right)\chi(32a)\CS\left(\frac{a}{m}+\frac{b}{32}\right).\]
Next, using the relation \eqref{3-3-f} with $\beta=\frac{2a}{m}+\frac{b}{16}$ for $b\in\fB_{11}\cup\fB_{1}$ and $\ell=2$, we obtain
\[c_1+c_4=\chi(2)\sum^m_{a=1}\chi(a)\sum_{b\in\fB_5\cup\fB_{15}}\left(2\CS\left(\frac{a}{m}+\frac{b}{32}\right)+\CS\left(\frac{4a}{m}+\frac{b}{8}\right)-\CS\left(\frac{1}{2}\right)\right).\]
Since $\chi$ is nontrivial, we know that 
\[\sum^m_{a=1}\chi(a)\CS\left(\frac{1}{2}\right)=0.\] 
Additionally, we note that $\CS\left(\frac{4a}{m}+\frac{b}{8}\right)$ depends only on $b$ modulo $8$. Applying \eqref{3-3-f} again for $\ell=2$, we obtain
\[\sum_{b\in\fB_{5}\cup\fB_{15}}\CS\left(\frac{4a}{m}+\frac{b}{8}\right)=2\CS\left(\frac{64a}{m}\right)+2\CS\left(\frac{16a}{m}\right).\]
Since $m$ is odd and $\chi$ is a quadratic character, for $x=16$ or $64$, we have
\[\sum^m_{a=1}\chi(a)\CS\left(\frac{xa}{m}\right)=\sum^m_{a=1}\chi(a)\CS\left(\frac{a}{m}\right).\]
Combining these facts, we obtain the equation
\begin{equation}\label{ap-f4}
c_1+c_4=\chi(2)\sum^m_{a=1}\chi(a)\left(4\CS\left(\frac{a}{m}\right)+\sum_{b\in\fB_{5}\cup\fB_{15}}2\CS\left(\frac{a}{m}+\frac{b}{32}\right)\right).
\end{equation}
Since $\chi$ is even, and by part (1) of Proposition \ref{prop3-1}, we have
\[\sum^m_{a=1}\chi(a)\CS\left(\frac{a}{m}\right)=\sum^{(m-1)/2}_{a=1}\chi(a)\cdot 2\CS\left(\frac{a}{m}\right),\]
which is an integer. Taking both sides of \eqref{ap-f4} modulo $4$, we have
\begin{equation}\label{ap-f5}
c_1+c_4\equiv \chi(2)\sum^m_{a=1}\chi(a)\sum_{b\in \fB_5\cup\fB_{15}}2\CS\left(\frac{a}{m}+\frac{b}{32}\right)\mod 4.
\end{equation}
Note that the sum in the  right hand side equals
\[\sum\limits_{a=1}^{(m-1)/2} \chi(a) \sum\limits_{b\in \mathfrak{B}_5\cup \mathfrak{B}_{15}} 2\bigg( \mathcal{S}(\frac{a}{m}+\frac{b}{32})+\mathcal{S}(\frac{-a}{m}+\frac{b}{32}) \bigg)\]
which is further equal to
\begin{tiny}
\[\sum\limits_{a=1}^{(m-1)/2} \chi(a) \sum\limits_{\substack{b\equiv5m',15m'\mod32\\1\leq b\leq 32}} 2\bigg( \mathcal{S}(\frac{a}{m}+\frac{b}{32})+\mathcal{S}(\frac{-a}{m}+\frac{b}{32}) +\mathcal{S}(\frac{a}{m}+\frac{-b}{32})+\mathcal{S}(\frac{-a}{m}+\frac{-b}{32})\bigg).\]
\end{tiny}
Observe that the summands in the summation of \(b\) are even integers. From the fact that 
$\chi(2)$ and $\chi(a)$ takes values in $\{\pm1\}$ if \((a,m)=1\), the congruence \eqref{ap-f5} becomes
\[c_1+c_4\equiv \sum_{1\le a\le m,(a,m)=1} \sum_{b\in \fB_5\cup\fB_{15}}2\CS\left(\frac{a}{m}+\frac{b}{32}\right)\mod 4.\]
Note that $m=pq$, where $p$ and $q$ are distinct primes. We can write $a=pv+qu$ with $v\in(\BZ/q\BZ)^\times$ and $u\in(\BZ/p\BZ)^\times$. The above congruence becomes
\[c_1+c_4\equiv2\sum_{b\in\fB_{5}\cup\fB_{15}}\sum^{p-1}_{u=1}\sum^{q-1}_{v=1}\CS\left(\frac{u}{p}+\frac{v}{q}+\frac{b}{32}\right)\mod 4.\]
By applying relation \eqref{3-3-f} iteratively for $\ell=q$ and $\ell=p$, using the facts that $a_p$ and $a_q$ are odd, and that
\[2\sum_{b\in\fB_5\cup\fB_{15}}\sum_{k\in\BZ/q\BZ}\CS\left(\frac{k}{q}\right)\equiv 0\mod 4,\]
we obtain the following congruence
\[\begin{aligned}c_1+c_4\equiv2&\sum_{b\in \fB_{5}\cup\fB_{15}}\left(\CS\left(\frac{mb}{32}\right)+\CS\left(\frac{pmb}{32}\right)+\CS\left(\frac{qmb}{32}\right)
+\CS\left(\frac{pb}{32}\right)+\CS\left(\frac{qb}{32}\right)\right)\\+&2\sum_{b\in\fB_{5}\cup\fB_{15}}\sum_{l\mid m}\CS\left(\frac{l^2b}{32}\right) \mod4\end{aligned}\]
Let $\msq$ denote the right hand side of the above congruence. A direct computation shows that
\[\begin{aligned}\msq\equiv&2\sum_{b=\pm 5,\pm 15}\CS\left(\frac{b}{32}\right)\\
+&2\sum_{k\mid m}\left(\sum_{\substack{b=\pm 5k,\pm 15k\\ 1\leq b\leq 32}}\CS\left(\frac{b}{32}\right)+\sum_{\substack{b=\pm 5k',\pm 15k'\\1\leq b\leq32}}\CS\left(\frac{b}{32}\right)\right)\\
+&2\left(\sum_{\substack{b=\pm 5pq',\pm 15pq'\\1\leq b\leq 32}}\CS\left(\frac{b}{32}\right)+\sum_{\substack{b=\pm 5p'q,\pm 15p'q\\1\leq b\leq 32}}\CS\left(\frac{b}{32}\right)\right) \mod 4.
\end{aligned}\] 
By Lemma \ref{ap-lem3}, we complete the proof for the congruence \eqref{ap-f2}.
\end{proof}

\section{Appendix B: An Equivariant Coates-Wiles Theorem}\label{sec8}

Let $K$ be an imaginary quadratic field, and let $\CO_K$ denote its ring of integers.
Let $A$ be an elliptic curve defined over $K$ with complex multiplication by $\CO_K$. Let $\CK$ be a finite abelian extension of $K$, and let $\eta$ be a character of ${\rm Gal}(\CK/K)$. Denote by $\psi_A$ the Hecke character of $K$ associated to $A$, and by $L(\overline{\psi_A\eta},s)$ the $L$-series associated to the complex conjugate of $\psi_A\eta$. By Deuring's theorem, $L(\overline{\psi_A\eta},s)$ is holomorphic on the whole complex plane.
For a $\BC[{\rm Gal}(\CK/K)]$-module $\fM$ and a character $\chi$ of ${\rm Gal}(\CK/K)$, we denote by $\fM^\chi$ the $\chi$-part of $\fM$.

\begin{thm}\label{ap2-thm1}
Assume that $L(\overline{\psi_A\eta},1)\neq0$, then $(A(\CK)\otimes\BC)^\eta=0$.
\end{thm}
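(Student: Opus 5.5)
The plan is to follow the Coates--Wiles--Rubin method through Iwasawa theory of the imaginary quadratic field $K$, at an auxiliary prime, and make each step $\eta$-equivariant. Let $\mathcal{O}$ be the ring generated over $\CO_K$ by the values of $\eta$. First choose a rational prime $\ell$ that splits in $K$ as $\ell=\fl\overline{\fl}$, is prime to $6\,\#{\rm Gal}(\CK/K)$, and at which $A$ has good (necessarily ordinary) reduction; we may also ask that $\ell$ be prime to the conductor of $\eta$ and to finitely many further exceptional primes. Since $\ell\nmid[\CK:K]$, the idempotent $e_\eta$ is defined over $\mathcal{O}\otimes\BZ_\ell$ and taking $\eta$-parts is exact. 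It therefore suffices to show that $e_\eta\bigl(A(\CK)\otimes\BZ_\ell\bigr)$ is finite, or more strongly that the $\eta$-part of the $\fl^\infty$-Selmer group $\Sel_{\fl^\infty}(A/\CK)$ is finite. By Kummer theory, $(A(\CK)\otimes\BC)^\eta\neq0$ would give $\Sel_{\fl^\infty}(A/\CK)^\eta$ positive $\mathcal{O}_\fl$-corank.

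Next, pass to the field $\CK_\infty=\CK(A_{\fl^\infty})$, a $\BZ_\ell$-extension (up to a finite prime-to-$\ell$ layer) of $\CK(A_\fl)$ with Galois group $\mathcal{G}$. Let $X_\infty$ be the Galois group of the maximal abelian $\ell$-extension of $\CK_\infty$ unramified outside $\fl$. The standard control argument for CM curves (Coates, Rubin) gives an injection, with finite kernel and cokernel, of $\Sel_{\fl^\infty}(A/\CK)$ into ${\rm Hom}(X_\infty,A_{\fl^\infty})^{{\rm Gal}(\CK_\infty/\CK)}$. Taking $\eta$-parts, finiteness of the $\eta$-Selmer group is reduced to showing that the characteristic ideal of $X_\infty$, in the relevant $\eta\kappa$-isotypic direction ($\kappa$ the character through which $\mathcal{G}$ acts on $A_{\fl^\infty}$), does not vanish at the point corresponding to $\kappa$ itself.

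Then invoke Rubin's two-variable main conjecture for $K$, with the elliptic-units version of Coates--Wiles' one-variable result sufficing for this direction. It identifies that characteristic ideal with the ideal generated by the Katz $\fl$-adic $L$-function $\mathcal{L}_\fl$ on the appropriate branch. The interpolation formula expresses the value of $\mathcal{L}_\fl$ at the character $\kappa\eta$ as a nonzero period times Euler factors at $\fl$ times $L(\overline{\psi_A\eta},1)$. By hypothesis this $L$-value is nonzero. The Euler factor $1-\psi_A\eta(\overline{\fl})/N\fl$ is nonzero because $|\psi_A(\overline{\fl})|=\sqrt{\ell}$. It follows that $\mathcal{L}_\fl(\kappa\eta)\neq0$, hence $\Sel_{\fl^\infty}(A/\CK)^\eta$ is finite, and so $(A(\CK)\otimes\BC)^\eta=0$.

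The main obstacle is the equivariant descent step: checking that the local conditions at primes above $\fl$, at primes of bad reduction, and at primes ramified in $\CK/K$ behave well after projecting to the $\eta$-component, and matching the $\eta$-branch of the Iwasawa module with the correct specialization of the two-variable $L$-function. I would handle the bad primes by choosing $\ell$ outside a finite set, so that all local error terms are finite of order prime to $\ell$. I would handle the $\fl$-adic condition as Coates--Wiles do, using the fact that the local units modulo elliptic units have characteristic ideal equal to $\mathcal{L}_\fl$. The argument is standard, so a sketch citing Rubin's work should suffice.
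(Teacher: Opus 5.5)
Your proposal is correct and follows essentially the same route as the paper: choose an auxiliary split prime $\ell\nmid[\CK:K]$ of good ordinary reduction, and project the $\fl$-power Selmer group to its $\eta$-part. Then descend from the tower $\CK(A_{\fl^\infty})$; the paper does this in three explicit steps (finite index of $\Sel_\fL$ in the relaxed $\Sel'_\fL$, then $\Delta$-invariants, then $\Gamma$-invariants), which you package as a single control map. Finally, Coates--Wiles together with Rubin's main conjecture identifies the characteristic ideal with the $\fl$-adic $L$-function, whose value at $\kappa$ on the $\eta$-branch is a nonzero Euler factor times $L(\overline{\psi_A\eta},1)/\Omega$.

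The only differences are cosmetic. The paper writes the Euler factor as $1-\psi_A\eta(\fL)/\ell$, and you explicitly justify its nonvanishing, which the paper leaves implicit.
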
    

The idea for the above theorem follows from the Iwasawa theoretic method for $\psi_A\eta$ introduced by Coates, Wiles and Rubin. Specifically, we choose an odd prime $\ell$ satisfying:
\begin{enumerate}
\item[(1)]$\ell$ splits in $K$, say $\ell\CO_K=\fL^*\cdot\fL$;
\item[(2)] $A$ has good ordinary reduction at both $\fL$ and $\fL^*$;
\item[(3)]$\ell$ is coprime to $[\CK:K]$ and unramified in $\CK/\BQ$.
\end{enumerate}
The proof then applies Iwasawa theory to the tower $\CK(E(\fL))/K$. Here, for an $\CO_K$-module $\fM$ and an element $\alpha\in \CO_K$, we denote by $\fM_\alpha$ the $\alpha$-torsion subgroup of $\fM$. For an ideal $\fb\subset\CO_K$,  we define $\fM_\fb=\cap_{\alpha\in\fb}\fM_\alpha$ and set $\fM(\fb)=\cup_{n\geq1}\fM_{\fb^n}$.

Let $\fM$ be a $\BZ_\ell[\fG]$-module, where $\fG$ is a finite abelian group whose order is prime to
$\ell$. For each character $\chi$ of $\fG$, we define $\fM^\chi$ to be the $\chi$-part of $\fM$. For a precise definition, see \cite{rubin}. Note that $\fM^\chi$ is a direct summand of $\fM$. 

For a finite extension $\fK$ of $K$, we define the \(\fL\)-power Selmer group $\Sel_\fL(A/\fK)$ by the exact sequence
\[0\to \Sel_\fL(A/\fK) \to H^1(\fK,A(\fL))\to \prod_{v}H^1(\fK_v,A)(\fL),\]
where the product runs over all places of $\fK$. 
Observe that $\Sel_\fL(A/\CK)$ is a $\BZ_\ell[{\rm Gal}(\CK/K)]$-module.
Theorem \ref{ap2-thm1} is a consequence of the following proposition.

\begin{prop}\label{ap2-prop2}
If $L(\overline{\psi_A\eta},1)\neq0$, then $\Sel_\fL(A/\CK)^\eta$ is finite.
\end{prop}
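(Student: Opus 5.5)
We follow Coates--Wiles and Rubin, working over the tower $\CK_\infty=\CK(A(\fL))=\bigcup_n\CK(A_{\fL^n})$. Put $\Delta={\rm Gal}(\CK(A_\fL)/K)$ and $\mathcal{G}_\infty={\rm Gal}(\CK_\infty/K)$; then $\mathcal{G}_\infty\simeq\Delta\times\Gamma$ with $\Gamma\simeq\BZ_\ell$. The order of ${\rm Gal}(\CK/K)$ is prime to $\ell$ by condition (3), and $A_\fL$ has order $\ell$. Hence $\eta$ can be regarded as a character of $\Delta$ of order prime to $\ell$, and its $\eta$-parts are exact functors. The plan has three steps.

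\emph{Step 1 (control and descent).} Since $\ell$ is unramified in $\CK$ and $A$ has good ordinary reduction at $\fL$, the restriction map
\[\Sel_\fL(A/\CK)\to \Sel_\fL(A/\CK_\infty)^{{\rm Gal}(\CK_\infty/\CK)}\]
has finite kernel and cokernel. The kernel is bounded by $H^1({\rm Gal}(\CK_\infty/\CK),A(\fL)(\CK_\infty))$, and the local cokernels are finite at places away from $\fL$; at places above $\fL$ we use the fact that $\fL$-power division points are locally formal-group points. Taking $\eta$-parts, it therefore suffices to show that the $\Gamma$-coinvariants of $X^\eta$ are finite, where $X=\Sel_\fL(A/\CK_\infty)^\vee$. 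Standard Iwasawa theory identifies $\Sel_\fL(A/\CK_\infty)^\vee$ with $\mathrm{Hom}(\mathcal{X}_\infty,A(\fL))$, where $\mathcal{X}_\infty$ is the Galois group of the maximal abelian $\ell$-extension of $\CK_\infty$ unramified outside the primes above $\fL$. After the appropriate twist by the character $\kappa$ of $\mathcal{G}_\infty$ acting on $A(\fL)$, we must show that $\mathcal{X}_\infty^{\eta\kappa^{-1}}$ has no $\Gamma$-coinvariants of positive rank. Concretely, a characteristic power series $f$ of this module must not vanish at the point of $\Lambda$ corresponding to the trivial twist.

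\emph{Step 2 (main conjecture).} We invoke Rubin's two-variable main conjecture for imaginary quadratic fields, in the one-variable $\fL$-adic form along $\CK_\infty$. It says that the characteristic ideal of $\mathcal{X}_\infty^{\chi}$ equals that of $(\mathcal{U}_\infty/\mathcal{C}_\infty)^{\chi}$, where $\mathcal{U}_\infty$ is the module of local units at $\fL$ and $\mathcal{C}_\infty$ is the module of elliptic units. The latter characteristic ideal is generated by the Katz $\fL$-adic $L$-function $\mathcal{L}_{\fL}$, obtained from elliptic units through the Coleman power series. The prime $\ell$ is odd and splits, and $\eta$ has order prime to $\ell$, so Rubin's hypotheses are met.

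\emph{Step 3 (interpolation).} Apply the interpolation formula, in the style of Coates--Wiles and de Shalit, at the character $\overline{\psi_A\eta}$:
\[\mathcal{L}_\fL(\overline{\psi_A\eta}) = (\text{unit and period factors})\cdot\Bigl(1-\frac{\psi_A\eta(\fL)}{\ell}\Bigr)\bigl(1-\overline{\psi_A\eta}(\fL^*)\bigr)\cdot\frac{L(\overline{\psi_A\eta},1)}{\Omega}.\]
Good ordinary reduction at $\fL$ and $\fL^*$ makes the Euler factors nonzero, because $\psi_A(\fL)$ is an $\fL$-adic non-unit while $\psi_A(\fL^*)$ is a unit not equal to $1$. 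One checks the latter by choosing $\ell$ large, so that no root of unity occurs. Thus $L(\overline{\psi_A\eta},1)\neq0$ gives $\mathcal{L}_\fL(\overline{\psi_A\eta})\neq0$. Hence $f$ does not vanish at the relevant point, the coinvariants are finite, and by Step 1 the module $\Sel_\fL(A/\CK)^\eta$ is finite.

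The main obstacle is Step 1 together with the bookkeeping that matches twists. One must verify that the $\eta$-part of the Selmer group over $\CK$ corresponds precisely to the specialization of the $\Lambda$-module at $\overline{\psi_A\eta}$, and not at a conjugate or dual character, while keeping track of the $\kappa$-twist. One must also bound the finite error terms in the control theorem at the primes above $\fL$ and at the bad primes of $A$ in $\CK$. For this we would follow Rubin's \emph{Invent.~Math.}~1991 paper, Sections 11--12, and substitute $\eta$-parts throughout.
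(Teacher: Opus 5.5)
Your proposal is correct and is essentially the paper's argument: both descend to the $\Gamma$-coinvariants of the $\eta$-part of the Selmer dual over $\CK(A(\fL))$, invoke Rubin's main conjecture, and use the Coates--Wiles interpolation at $\kappa$ to see that the characteristic power series does not vanish there. The only structural difference is that you control the true Selmer group up to finite error, whereas the paper first passes to the relaxed group $\Sel'_\fL$ (finite index, then exact control, Lemma~\ref{ap2-lem3}).

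There are two minor slips. First, $\mathrm{Hom}(\mathcal{X}_\infty,A(\fL))$ is $\Sel_\fL(A/\CK_\infty)$ itself, not its dual. Second, the Euler factors are nonzero simply because $|\psi_A\eta(\fL)|=|\psi_A\eta(\fL^*)|=\sqrt{\ell}$, so no largeness of $\ell$ is needed; the paper's formula in fact carries only the factor $1-\psi_A\eta(\fL)/\ell$.
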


For a finite extension $\fK/K$, we define another Selmer group $\Sel'_\fL(A/\fK)$ by the exact sequence
\[0\to \Sel'_\fL(A/\fK)\to H^1(\fK,A(\fL))\to \prod_{v\nmid \fL}H^1(\fK_v,A)(\fL), \]
where the product runs over all primes of $\fK$ not lying above $\fL$. Let $\CF=\CK(A_\fL)$ and $\Delta={\rm Gal}(\CF/\CK)$. For a group $\fG$ and a $\fG$-module $\fM$, we denote by $\fM^\fG$ the submodule of $\fG$-invariant elements and $\fM_\fG$ for the maximal quotient of $\fM$ on which $\fG$ acts trivially.

\begin{lem}\label{ap2-lem3}
The following statements hold:
\begin{enumerate}
\item[(1)]The index of $\Sel_\fL(A/\CK)$ in $\Sel'_\fL(A/\CK)$ is finite.
\item[(2)]$\Sel'_\fL(A/\CK) \simeq (\Sel'_\fL(A/\CF))^\Delta$.
\item[(3)]Let $\CF_n=\CK(A_{\fL^{n+1}})$ for each integer $n\geq0$, and set $\CF_\infty=\CK(A(\fL))$. Define
\[\Sel'_\fL(A/\CF_\infty)=\lim_n \Sel'_\fL(A/\CF_n),\]
where the direct limit is taken with respect to the restriction maps. Let $\Gamma={\rm Gal}(\CF_\infty/\CF)$. Then
\[\Sel'_\fL(A/\CF)\simeq \Sel'_\fL(A/\CF_\infty)^\Gamma.\]
\end{enumerate}
\end{lem}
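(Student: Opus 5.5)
We treat the three parts separately; each is a standard comparison of Selmer groups via the inflation–restriction sequence and local analysis at the primes above $\fL$.

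For (1), compare the two defining sequences. By definition $\Sel_\fL(A/\CK)\subset\Sel'_\fL(A/\CK)$. The quotient injects into $\prod_{v\mid\fL}H^1(\CK_v,A)(\fL)$, the product taken over the finitely many primes of $\CK$ above $\fL$. By Tate local duality, $H^1(\CK_v,A)(\fL)$ is dual to the $\fL^*$-adic completion of $A(\CK_v)$ (via the Weil pairing, $A(\fL)$ is dual to $T_{\fL^*}A$). We then need to show that the image of $\Sel'$ in each local term is finite. To do this we use that $A$ has good ordinary reduction at $v\mid\fL$ and that $\fL^*$ is prime to $v$: the formal group part of $A(\CK_v)$ is $\fL$-divisible-adic, so its $\fL^*$-adic completion is the finite group $\widetilde A(k_v)(\fL^*)$. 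Hence each local term $H^1(\CK_v,A)(\fL)$ is finite, and the index is finite. The point to check carefully is the identification of which torsion part is dual to which; alternatively, argue that $H^1(\CK_v,A(\fL))$ modulo the image of $A(\CK_v)\otimes K_\fL/\CO_\fL$ has finite corank zero by a local Euler characteristic count.

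For (2), apply inflation–restriction to $\Delta={\rm Gal}(\CF/\CK)$:
\[0\to H^1(\Delta,A(\fL)^{G_\CF})\to H^1(\CK,A(\fL))\to H^1(\CF,A(\fL))^\Delta\to H^2(\Delta,A(\fL)^{G_\CF}).\]
Since $\Delta$ embeds in $(\CO_K/\fL)^\times\simeq\BF_\ell^\times$, its order $\ell-1$ is prime to $\ell$, so both outer cohomology groups vanish and restriction is an isomorphism. It then remains to match the local conditions at $v\nmid\fL$. Here $[\CF_w:\CK_v]$ divides $\ell-1$, which is prime to $\ell$, so the local restriction $H^1(\CK_v,A)(\fL)\to H^1(\CF_w,A)(\fL)$ is injective (restriction followed by corestriction is multiplication by the degree). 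Consequently a class is locally trivial downstairs if and only if it is locally trivial upstairs, which gives the isomorphism.

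For (3), the restriction map from $H^1(\CF,A(\fL))$ to $H^1(\CF_\infty,A(\fL))^\Gamma$ has kernel $H^1(\Gamma,A(\fL)^{G_{\CF_\infty}})=H^1(\Gamma,A(\fL))$ and cokernel inside $H^2(\Gamma,A(\fL))=0$, since $\Gamma\simeq\BZ_\ell$ has cohomological dimension $1$. The module $A(\fL)$ is $\fL$-divisible, and $\Gamma$ acts on it through a character that is nontrivial on the generator: by construction $\CF$ already contains $A_\fL$, and $\Gamma$ acts faithfully on $A(\fL)$. Hence $H^1(\Gamma,A(\fL))=A(\fL)_\Gamma=0$, because $\gamma-1$ is surjective on the divisible module $A(\fL)$. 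For the local conditions at $v\nmid\fL$, note that $\CF_\infty/\CF$ is unramified outside $\fL$ by the Néron–Ogg–Shafarevich criterion. The kernel of the local restriction is therefore $H^1$ of a procyclic decomposition group acting on $A(\CF_w)(\fL)$, and we must show that this kernel is killed, or at least that it does not affect the $\Gamma$-invariants. This step is the main obstacle: at primes $v\nmid\fL$ the decomposition group in $\Gamma$ may be trivial or finite-index. The plan is to use that $A(\fL)$ over $\CF_{\infty,w}$ is either all of $A(\fL)$ (divisible, so its $H^1$ under a pro-$\ell$ group with surjective $\gamma-1$ vanishes) or else the decomposition group is trivial. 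Either way the local kernels vanish, and we conclude that $\Sel'_\fL(A/\CF)\simeq\Sel'_\fL(A/\CF_\infty)^\Gamma$.
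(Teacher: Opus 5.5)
Your arguments for (1), (2) and the global half of (3) are the standard ones; the paper gives no proof and only cites Coates. Those steps are fine: at $v\mid\fL$ the $\fL$-part of $H^1(\CK_v,A)$ is dual to $\widetilde A(k_v)(\fL^*)$, the local degrees $[\CF_w:\CK_v]$ divide $\ell-1$, and $H^1(\Gamma,A(\fL))=H^2(\Gamma,A(\fL))=0$.

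The gap is in the local half of (3).

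\begin{itemize}
\item \textbf{Wrong coefficients.} The kernel of $H^1(\CF_w,A)(\fL)\to H^1(\CF_{\infty,w'},A)(\fL)$ is the $\fL$-part of $H^1(D_w,A(\CF_{\infty,w'}))$. Its coefficients are the full local Mordell--Weil group, not $A(\fL)$, and not ``$A(\CF_w)(\fL)$''. Unramifiedness of $\CF_\infty/\CF$ at $w$ does not reduce this to torsion coefficients, so your ``therefore'' is unjustified.
\item \textbf{Injectivity on $A(\fL)$-coefficients is not enough.} Even if $H^1(\CF_w,A(\fL))\to H^1(\CF_{\infty,w'},A(\fL))$ is injective, a class could fall into the Kummer image $A(\CF_{\infty,w'})\otimes K_\fL/\CO_\fL$ upstairs without lying in the Kummer image downstairs.
\item \textbf{Misplaced dichotomy.} $A(\fL)$ is always rational over $\CF_\infty$ by construction; only $D_w$ varies with $w$.
\item \textbf{Primes above $\fL^*$.} The primes $w\mid\fL^*$ divide $\ell$, so a N\'eron--Ogg--Shafarevich argument does not cover them.
\end{itemize}

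The missing idea is that the local Kummer images vanish at every $w\nmid\fL$, at every finite level $\CF_n$.
\begin{itemize}
\item For $w\nmid\ell$: $A(\CF_{n,w})$ has a finite-index open pro-$p$ subgroup with $p\neq\ell$, so $A(\CF_{n,w})\otimes\BQ_\ell/\BZ_\ell=0$.
\item For $w\mid\fL^*$: $\CO_K$ acts on the formal group through $\CO_{K,\fL^*}$, so elements of $\fL\setminus\fL^*$ act invertibly on it. Since $\widetilde A(k_w)$ is finite, $A(\CF_{n,w})\otimes_{\CO_K}K_\fL/\CO_\fL=0$.
\end{itemize}
Hence $H^1(\CF_{n,w},A(\fL))\hookrightarrow H^1(\CF_{n,w},A)(\fL)$. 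The condition defining $\Sel'$ at $w\nmid\fL$ is therefore simply $\mathrm{res}_w(c)=0$ in $H^1(\CF_{n,w},A(\fL))$.

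With this, your computation finishes the proof.
\begin{enumerate}
\item If $\mathrm{res}(c)\in\Sel'_\fL(A/\CF_\infty)$, then $\mathrm{res}_{\CF_m}(c)\in\Sel'_\fL(A/\CF_m)$ for some $m$.
\item So $c_w$ lies in $\ker\bigl(H^1(\CF_w,A(\fL))\to H^1(\CF_{\infty,w'},A(\fL))\bigr)=H^1(D_w,A(\fL))$.
\item This group vanishes. $D_w\subseteq\Gamma\simeq\BZ_\ell$ is either trivial or open. In the open case a generator acts on $A(\fL)\simeq K_\fL/\CO_\fL$ by a unit $u\neq1$, and $u-1$ is surjective.
\end{enumerate}
Once the local conditions are strict, you do not need unramifiedness of $\CF_\infty/\CF$ outside $\fL$ at all.
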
 

All claims in Lemma \ref{ap2-lem3} are standard; see \cite{coates1} (or \cite[Theorem 5.1]{KL-lms} and references therein) for detailed proofs. Let $\CG={\rm Gal}(\CF_\infty/\CK)$ and $G={\rm Gal}(\CK/K)$. Note that $\Gamma={\rm Gal}(\CF_\infty/\CF)\simeq\BZ_\ell$.  Since $\ell$ is coprime to $[\CF:K]$ and unramified in $\CK/K$, we have 
$${\rm Gal}(\CF_\infty/K)\simeq G\times \CG.$$  
Similarly, $\CG\simeq\Gamma\times\Delta$.  The following proposition will complete the proof of Proposition \ref{ap2-prop2}

\begin{prop}\label{ap2-prop3}
For each character $\eta$ of $G$, if $L(\overline{\psi_A\eta},1)\neq0$, then $(\Sel'_\fL(A/\CF_\infty)^\CG)^\eta$ is finite.
\end{prop}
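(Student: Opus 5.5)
Since $\ell\nmid[\CK:K]$, the functor $\fM\mapsto\fM^\eta$ is exact on $\BZ_\ell[G]$-modules. Also $\CG$ acts on $\Sel'_\fL(A/\CF_\infty)$ commuting with $G$. Hence $(\Sel'_\fL(A/\CF_\infty)^\CG)^\eta=(\Sel'_\fL(A/\CF_\infty)^{\eta})^{\CG}$, and we are reduced to a statement about the $\eta$-part of an Iwasawa module over $\Lambda=\BZ_\ell[[\CG]]$. I would follow Rubin's approach to the two-variable (here one-variable, $\fL$-adic) main conjecture.

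\textbf{Step 1 (reduction to a Galois group).} Let $M_\infty$ be the maximal abelian $\ell$-extension of $\CF_\infty$ unramified outside the primes above $\fL$, and set $\mathfrak{X}_\infty={\rm Gal}(M_\infty/\CF_\infty)$. Since $A_{\fL}$ becomes trivial over $\CF$, restriction gives the standard isomorphism
\[\Sel'_\fL(A/\CF_\infty)\simeq{\rm Hom}(\mathfrak{X}_\infty,A(\fL)).\]
This is the Coates–Wiles identification, using that $\ell$ splits and $A$ is ordinary at $\fL$. Let $\kappa:\CG\to\CO_{K,\fL}^\times$ be the character giving the action on $A(\fL)$. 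Then $\Sel'_\fL(A/\CF_\infty)^\CG$ is dual to the $\kappa$-coinvariants $(\mathfrak{X}_\infty\otimes\kappa^{-1})_\CG$. Taking $\eta$-parts, it suffices to show that the $\eta\kappa$-twisted coinvariants of $\mathfrak{X}_\infty$ are finite.

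\textbf{Step 2 (main conjecture).} Rubin's theorem is the main conjecture for $K$, applied here to the abelian extension $\CF_\infty/K$ with $\ell\nmid[\CF:K]$ (which follows from condition (3) and $\ell$ split). It says that the characteristic ideal of $\mathfrak{X}_\infty^{\eta\omega}$, for each character $\eta\omega$ of $G\times\Delta$, is generated by the Katz two-variable $\fL$-adic $L$-function $\mathcal{L}_{\fL}$ restricted to that branch. Since $\mathfrak{X}_\infty$ has no nonzero finite $\Lambda$-submodules, finiteness of the coinvariants at the point $\kappa\eta$ is equivalent to non-vanishing of the characteristic power series at that point.

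\textbf{Step 3 (interpolation).} Katz's interpolation formula, in the form used by Coates–Wiles and de Shalit, expresses the value of $\mathcal{L}_\fL$ at $\kappa\eta$ as
\[\bigl(1-\tfrac{\psi_A\eta(\fL^*)}{N\fL^*}\bigr)\bigl(1-\overline{\psi_A\eta}(\fL)/\ell\bigr)\,\frac{L(\overline{\psi_A\eta},1)}{\Omega},\]
up to a unit and a $p$-adic period. The Euler factors do not vanish, because $|\psi_A\eta(\fL^*)|=\sqrt{\ell}\neq\ell$. So $L(\overline{\psi_A\eta},1)\neq0$ forces the value to be nonzero, and the proposition follows.

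The main obstacle is Step 2. The difficulty is invoking the main conjecture in the needed generality: a non-cyclotomic abelian base $\CK$, and the $\eta$-branches. One must also check that the relevant branch is not an exceptional one where $\mu$ or trivial-zero issues arise. I would cite Rubin's Theorem 4.1 together with de Shalit's book, using conditions (1)–(3) on $\ell$ precisely to place ourselves in Rubin's hypotheses.
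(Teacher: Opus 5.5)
Your proposal is correct and follows essentially the same route as the paper: Coates' description of the relaxed Selmer group over $\CF_\infty$, then Rubin's main conjecture combined with the Coates--Wiles interpolation at $\kappa\eta$ (whose Euler factor is nonzero because $|\psi_A\eta(\fL)|=\sqrt{\ell}$), and then finiteness of the twisted $\CG$-coinvariants of the $\eta$-part, dualized to the $\CG$-invariants. The differences are cosmetic: you make explicit the identification $\Sel'_\fL(A/\CF_\infty)\simeq{\rm Hom}(\mathfrak{X}_\infty,A(\fL))$ that the paper absorbs into its citations; the absence of finite submodules is not needed for the finiteness criterion (the paper just cites \cite[Lemma 5.2]{KL-lms}); and your interpolation formula carries an extra factor at $\fL^*$ compared with the paper's $(1-\psi_A\eta(\fL)/\ell)$, which is harmless since it is also nonzero.
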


\begin{proof}
Let $X_A(\fL)$ denote the Pontryagin dual of $\Sel'_\fL(A/\CF_\infty)$. For a character $\eta$ of $G$, let $X_A(\fL)^\eta$ be its $\eta$-part. Let $\Lambda(\CG)$ denote the Iwasawa algebra of $\CG$.
By a theorem of Coates (see \cite{coates1}), $X_A(\fL)^\eta$ is a finitely generated torsion $\Lambda(\CG)$-module. Let $f_{A,\eta}$ be a generator of the characteristic ideal of $X_A(\fL)^\eta$. Let $\msi$ be the completion of the ring of integers in the maximal unramified extension of $\BZ_\ell$, and let $\Lambda_\msi(\CG)=\Lambda(\CG)\widehat{\otimes}\msi$. Denote by $\kappa$ the character of $\CG$ giving its action on $A(\fL)$.
By the elliptic analogue of Iwasawa's theorem for cyclotomic fields (this is due to Coates-Wiles, see \cite[Theorem 3.6]{rubin88} or \cite[Theorem 4.12]{deshalit}) together with Rubin's proof of the Iwasawa main conjecture \cite{rubin}, 
the ideal $f_{A,\eta}\Lambda_\msi(\CG)$ admits a generator $g_{\fL,\eta}$ satisfying 
$\kappa(g_{\fL,\eta})=\left(1-\frac{\psi_A\eta(\fL)}{\ell}\right)\cdot\frac{L(\overline{\psi_A\eta},1)}{\Omega}$,
where $\Omega$ is an $\CO_K$-generator of the period lattice of a minimal model of $A$ over $K$. The hypothesis $L(\overline{\psi_A\eta},1)\neq0$ thus implies $\kappa(f_{A,\eta})\neq0$. A standard result in Iwasawa theory (see \cite[Lemma 5.2]{KL-lms}) then shows that $(X_A(\fL)^\eta)_\CG$ is finite. By duality, this is equivalent to the finiteness of $(\Sel'_\fL(A/\CF_\infty)^\CG)^\eta$. 
\end{proof}

\section{Appendix C: Numerical data for Theorem \ref{thm2}}\label{sec9}

We list the first few examples (within 5000) of pairs of primes \((p,q)\) satisfying the conditions in Theorem \ref{thm2}:

\begin{center}
\begin{scriptsize}
\begin{table}[H]
\renewcommand{\arraystretch}{1.4}
\center
\begin{tabular}{|c|c|c|c|c|c|}
    \hline \(m\) & \(a_m\) & \(p\) & \(q\) & \(h(-q)\) & \(h(-6pq)\) \\
\hline 217 & -35 & 31 & 7 & 1 & 16 \\
\hline 721 & -65 & 103 & 7 & 1 & 64 \\
\hline 889 & -95 & 7 & 127 & 5 & 80 \\
\hline 1561 & 25 & 223 & 7 & 1 & 80 \\
\hline 1897 & 145 & 271 & 7 & 1 & 64 \\
\hline 2569 & 175 & 367 & 7 & 1 & 80 \\
\hline 2881 & -65 & 67 & 43 & 1 & 112 \\
\hline 3193 & 91 & 31 & 103 & 5 & 112 \\
\hline 3661 & -215 & 523 & 7 & 1 & 160 \\
\hline 4249 & 145 & 607 & 7 & 1 & 176 \\
\hline 4333 & 85 & 619 & 7 & 1 & 64 \\
\hline 4417 & -5 & 7 & 631 & 13 & 80 \\
\hline 4429 & 169 & 43 & 103 & 5 & 80 \\
\hline 4837 & 205 & 691 & 7 & 1 & 80 \\

    \hline
\end{tabular}
\end{table}
\end{scriptsize}
\end{center}

\end{document}